\numberwithin{equation}{section}
\newcommand{\abs}[1]{\left\vert#1\right\vert}
\newcommand{\norm}[1]{\left\Vert#1\right\Vert}
\newcommand{\h}{\mathbb{H}}
\newcommand{\adj}{\mathrm{adj}}
\newcommand{\ol}{\overline}
\newcommand{\wt}{\widetilde}
\newcommand{\rl}{\mathbb{R}}
\newcommand{\cx}{\mathbb{C}}
\newcommand{\D}{\mathbb{D}}
\newcommand{\Z}{\mathbb{Z}}
\newcommand{\Uu}{\mathcal{U}}
\newcommand{\one}{\mathbbm{1}}
\newcommand{\J}{\mathcal{J}}
\newcommand{\ipr}[1]{\left\langle #1 \right\rangle}
\newcommand{\dbar}{\overline{\partial}}
		\newtheorem{thm}{Theorem}[section]
	\newtheorem{lem}[thm]{Lemma}
		\newtheorem{prop}[thm]{Proposition}
	\newtheorem{co}[thm]{Corollary}
	\newtheorem*{thm*}{Theorem}
    	\newtheorem*{lem*}{Lemma}
	\theoremstyle{definition}
\title[Restricted type estimates]{Restricted type estimates on the Bergman projection of some singular domains}
\author{Debraj Chakrabarti}
\address{Department of Mathematics, Central Michigan University, 
Mt. Pleasant, MI 48859,USA}
\email{chakr2d@cmich.edu}
\urladdr{https://people.se.cmich.edu/chakr2d/} 
\author{Zhenghui Huo}
\address{Division of Natural and Applied Sciences, Duke Kunshan University, Kunshan, Jiangsu, 215316, China}
\email{Zhenghui.Huo@dukekunshan.edu.cn}
\urladdr{https://sites.google.com/view/huozhenghui} 
\subjclass[2020]{32A25, 32A36,  32A50}
 \thanks{Debraj Chakrabarti was partially supported by a grant from the NSF (DMS--2153907) and a grant from the Simons foundation (No. 706445). Zhenghui Huo was partially supported by NSFC Grant \# 12201265.}       
\begin{document}
	
	\begin{abstract} We obtain (weighted) restricted type estimates for the Bergman projection operator on monomial polyhedra, a class of domains generalizing the Hartogs triangle. From these estimates,
    we recapture $L^p$ boundedness results of the Bergman projection  on these domains. On some monomial polyhedra, we also discover that the Bergman projection  could fail to be of 
    weak type   $(q_*,q_*)$ where $q_*$ is the right endpoint of
    of the interval of $L^p$-regularity of the domain. 
    
\end{abstract}

 	\maketitle

\section{Introduction}
\subsection{The Bergman projection and the limited range phenomenon} For a domain (open connected subset) $\Omega$ in $\cx^n$ the \emph{Bergman projection} $P_\Omega$ is the orthogonal projection operator from the Hilbert space
$L^2(\Omega)$ of  square 
integrable functions on $\Omega$  to the \emph{Bergman space} $A^2(\Omega)$, the subspace 
of holomorphic  functions in $L^2(\Omega)$. The operator $P_\Omega$ has the integral representation
\[ P_\Omega f(z)= \int_\Omega K_\Omega(z,w)f(w)dV(w) \quad \text{ for } f\in L^2(\Omega),\]
where $dV$ is the Lebesgue measure of $\cx^n$, and
$K_\Omega:\Omega\times \Omega\to \cx$ is the Bergman kernel 
of $\Omega$. Mapping properties of $P_\Omega$  in spaces of smooth functions assume  importance in view of the connection 
with the $\dbar$-Neumann problem as well as the boundary behavior
of holomorphic mappings (see \cite{follandkohn, bellligocka}). The $L^p$-mapping 
properties of $P_\Omega$ have also been an object of considerable
interest (see \cite{ZahJud64,rudin,PhoSte77,McNSte94} etc.), driven partly by an analogy with the classical theory of 
 singular integral operators. On certain classes of smoothly bounded pseudoconvex domains, one can interpret the Bergman projection as a generalized Calderón-Zygmund singular integral operator by introducing appropriate pseudometrics adapted to the boundary geometry (see \cite{jeffbergmansingular}), thereby obtaining boundedness in $L^p$ for $1<p<\infty$ as well as weak type $(1,1)$ estimates (see also \cite{deng}). On the polydisc 
$\D^n$, the  operator $P_{\D^n}$ is of weak type $L(\log^+ L)^{n-1}$ (see \cite{huowickweaktype}), a phenomenon reminiscent of the left endpoint behavior of the tensor product of Hilbert transforms acting on functions on $\rl^n$. 

On many domains, however, the Bergman projection has a very
different behavior. For example, the Bergman projection may have a limited range of $L^p$-regularity, i.e., it is bounded in $L^p$ if and only if $p$ belongs to an open interval $(p_*,q_*)$ containing $2$ where $p_*, q_*$ are Hölder conjugates by the self-adjointness of $P_{\Omega}$. It is also possible that the Bergman projection is bounded in $L^p$ only for $p=2$ (see \cite{EdhMcN16b}). An important early example of limited $p$ range was the nonpseudoconvex but smoothly bounded Barrett worm (see \cite{barrett84}). Another example 
(see \cite{chakzeytuncu, EdhMcN16}) is the \emph{classical Hartogs triangle} 
\begin{equation}
    \label{eq-class-hartogs}
    \h= \{z\in \cx^2: \abs{z_1}<\abs{z_2}<1\},
\end{equation}
a nonsmooth pseudoconvex domain which has long 
served as a source of counterexamples in several complex variables. The Bergman projection $P_\h$ is bounded in $L^p$ if and only if $\frac{4}{3}<p<4$. Subsequently, several authors discovered such ``limited range of $L^p$-regularity" in other domains with geometry
similar to that of $\h$ (see \cite{EdhMcN16,EdhMcN16b,chen17,zhang2,zhang1} etc.).
Most of these examples were shown
in \cite{CJM} to belong to  a class of domains called \emph{monomial polyhedra}. In this paper, we will use these domains as  ``laboratory models" to understand the possible $L^p$ mapping properties of the Bergman projection outside the realm of smoothly bounded domains.  A bounded domain $\Uu_B$  in $\cx^n, n\geq 2$ is called a
{monomial polyhedron} if it can be expressed as
\begin{equation}
\label{eq-udef}
\Uu_B=\left\{z\in\cx^n: \text{ for } 1\leq j \leq n,\quad  \prod_{k=1}^n\abs{z_k}^{b^j_k} <1  \right\},
\end{equation}
where $B\in \Z^{n\times n}$ is an $n\times n$ matrix of integers, and the entry 
of $B$ at the $j$-th row and $k$-th column is denoted by $b^j_k$.
If $B=\begin{pmatrix} 1&-1\\0&\phantom{-}1\end{pmatrix}$,
we see that $\Uu_B=\h$. We will say that $\Uu_B$ is nontrivial if it is not the polydisc $\D^n$ (which corresponds to taking $B$ to be the identity matrix).
It was shown in \cite{CJM} that on a nontrivial monomial polyhedron $\Uu_B$ the Bergman projection $P_{\Uu_B}$  has limited range of $L^p$-boundedness. Monomial polyhedra, and \emph{quotient domains} that generalize them, have been studied by several authors 
(\cite{CKY19,dallara, MR4627650,MR4630461,summerpaper2023,huowicksymmetrized,qin} etc.) from various points of view.



\subsection{Weighted restricted type estimates} In this paper we  continue the study of the $L^{p}$ mapping behavior
of the Bergman projection on the monomial polyhedron $\Uu_B$.  
As noted above, there are Hölder-conjugate numbers
$p_*, q_*$ with $p_*<2<q_*$ such that  $P_{\Uu_B}$ is of strong  type 
$(p,p)$ (i.e. bounded in $L^p$) if and only if $p_*<p<q_*$. 
Several authors have studied the behavior of $P_{\Uu_B}$ at $q_*$ for specific $B$. For the Hartogs
triangle, and certain of its generalizations, 
the operator $P_{\Uu_B}$ is of weak type $(q_*,q_*)$ (see \cite{huowickweaktype,weak3, weak2, weak1,liwang, weak4}), i.e.,
$P_{\Uu_B}$ is bounded from the space $L^{q_*}(\Uu_B)$ to the Lorentz space $L^{q_*,\infty}(\Uu_B)$. Classical results of Lorentz space theory (recalled in Proposition~\ref{prop-duality} below) then suggest that the Bergman projection is of so-called \emph{restricted type} $(p_*, p_*)$, 
a notion that we now recall (see \cite{steinweissbook}). 

Let $(M,\mu)$ and $(N,\nu)$ be measure spaces (we suppress the $\sigma$-algebras from the notation). For
$p\geq 1$, a mapping $T$ is of \emph{restricted type $(p,p)$}  if $T$ is a linear map from 
the space of  simple functions on $M$ to the space  of measurable functions on $N$, and there is a constant $C>0$ such that whenever 
$E\subset M$ is measurable  with $\mu(E)<\infty$, we have 
\begin{equation}
    \label{eq-restricted}\norm{T(1_E)}_{L^p(N,\nu)} \leq C \norm{1_E}_{L^p(M,\mu)}.
\end{equation}
Here we use $1_E$ to denote the indicator (or characteristic) function of $E$, the function that is 1 on $E$ and 0 on the complement of $E$. One can also think of operators of restricted
type $(p,p)$ as those which have continuous extensions to operators from the Lorentz space $L^{p,1}(M,\mu)$
to the space $L^p(N,\nu)$ (see \cite{hunt}). 

Similarly, a linear operator $T$ mapping simple functions on $M$ to measurable functions on $N$ is called of \emph{restricted weak type $(p,p)$} if there is a $C>0$ such that  for each measurable subset $E\subset M$ of finite measure,  and for each $y>0$:
\begin{equation}
    \label{eq-def-rwt} \nu\{x\in N: \abs{T(1_E)(x)}> y\} \leq  \left(\frac{C}{y} \norm{1_E}_{L^p(M,\mu)} \right)^p= \frac{C^p}{y^p} {\mu(E)}.
\end{equation}
From the Chebyshev-Markov inequality one can see that if $T$ is of restricted type $(p,p)$, then it is of restricted weak type $(p,p)$ as well. 

In view of duality, it is natural to look at 
restricted type estimates on the Bergman projection with $p$ value
at the left-endpoint of the interval of $L^p$-regularity of $P_{\Uu_B}$. In fact, it makes sense to consider
the \emph{positive Bergman operator} $P_{\Uu_B}^+$, which dominates the Bergman projection and has similar mapping properties,  where for domain $\Omega$,   $P_\Omega^+$ is 
the operator whose action on a measurable function $f$ is given by
\[P_\Omega^+ f(z)=\int_\Omega \abs{K_\Omega(z,w)}f(w)dV(w),  \]
provided the integral converges. The operator $P_\Omega^+$ has 
the advantage that its adjoint in weighted $L^p$-spaces is easy to compute (see \eqref{eq-padjoint} below).
We  show that  restricted type estimates hold at $p_*$ provided $\Uu_B$
satisfies a further condition, that 
a certain invariant $m$ (defined in  \eqref{eq-m}) associated with 
the domain $\Uu_B$ is equal to 1 (see Corollary~\ref{cor-m=1} below).  
 When $m\not=1$,
we still obtain restricted type estimates, but now weighted ones, 
between \emph{weighted measure spaces} on  $\Uu_B$ (see Theorem~\ref{thm-genest} below). For a nonnegative measurable function $v$ on a domain $\Uu\subset \cx^n$, by the weighted measure space $(\Uu, v)$, we mean the measure space with the $\sigma$-algebra of (Lebesgue)-measurable subsets of $\Uu$, and
with the measure of a measurable $E\subset \Uu$ given by $ v(E)= \int_E vdV.$


\subsection{Main results} We will always assume that the dimension $n\geq 2$. Let 
$\Delta=\adj B=(\det B)B^{-1}$ be the adjugate matrix of the integer matrix $B\in \Z^{n\times n}$ that occurs in the definition \eqref{eq-udef} of the domain 
$\Uu_B$, where $\Z^{a\times b}$ denotes the set of integer matrices of $a$ rows and $b$ columns. By Cramer's rule, $\Delta$ has integer entries, and from the fact that $\Uu_B$ is a 
bounded open set, it easily follows that each entry of $\Delta$ is nonnegative and $\det \Delta\not=0$ (see \cite[Proposition~3.2]{CJM}).
 Let  For $1\leq j \leq n$, denote  by $\delta_j\in \Z^{n\times 1}$ the $j$-th column of the matrix $\Delta$. Let 
\[ a_j= \frac{1}{\gcd \delta_j} \delta_j\in \Z^{n\times 1},\]
where the positive integer $\gcd \delta_j$  is the greatest common 
divisor of the $n$ entries of the column matrix $\delta_j$ (which is nonzero since $\det \Delta\not=0$).
We set
\begin{equation}
    \label{eq-adef} A= \begin{pmatrix}
        \vert & \cdots & \vert\\
        a_1 & \cdots & a_n\\
        \vert & \cdots & \vert
    \end{pmatrix}\in \Z^{n\times n}
\end{equation}
to be the $n\times n$ matrix of nonnegative integers whose $j$-th column is $a_j$, obtained by dividing the column vector $c_j$ by the gcd of its elements. 
The geometric significance of the matrix $A$ is that it is the symbol of a monomial branched covering map $z\mapsto z^A$ of $\Uu_B$ by a punctured polydisc, see Proposition~\ref{prop-mono2} below.
Let  \begin{equation}
    \label{eq-onedef}
    \one = (1,\dots, 1)\in \Z^{1\times n}
\end{equation}
be the $1\times n$ row matrix whose entries are all 1's.
Next, we introduce several quantities that are crucial in our results:
\begin{enumerate}[wide]
\item Let $m$ be 
the \emph{number of largest elements} in the row-vector of integers
\[\one A=(\one a_1, \one a_2, \dots, \one a_n)\in \Z^{1\times n},\] i.e.,
denoting by $\#S$ the number of elements of a set $S$:
\begin{equation}
    \label{eq-m}
     m= \#\{j:\one a_j = \max_k\{ \one a_k, 1\leq k \leq n\}\}.
\end{equation}
\item Let \begin{equation}
    \label{eq-pstar1}
     p_*= \max_j \frac{2\one a_j}{\one a_j+1}.
\end{equation}
It was shown in \cite{CJM} that $p_*$ is the left endpoint of the interval of $L^p$-boundedness of the Bergman projection on $\Uu_B$. If $\Uu_B$ is nontrivial, then $1<p_*<2.$
\item  For a tuple $(\alpha_1,\dots, \alpha_n)\in \rl^n$ of  real numbers 
let  $\rho_{\alpha}:\cx^n\to \rl$ be  the function
\begin{equation}
    \label{eq-rhodef}\rho_\alpha(z)= \prod_{j=1}^n \abs{z_j}^{\alpha_j} =\abs{z_1}^{\alpha_1}\cdot \dots\cdot \abs{z_n}^{\alpha_n}.
\end{equation}
If $\alpha\in \Z^n$, notice that $\rho_\alpha(z)=\abs{z^\alpha}$, where, using standard multi-index notation $z^\alpha=\prod_{j=1}^nz_j^{\alpha_j}$.
Define a  function $w$ on $\Uu_B$ by setting
\begin{equation}
    \label{eq-vdef-poly}
    w=\rho_{\one-\one A^{-1}}.
\end{equation}
It is not difficult to see that $0\leq w \leq 1$ on 
$\Uu_B.$

\end{enumerate}
Our first result is the following weighted restricted type estimate:
\begin{thm}
    \label{thm-genest} Let $\Uu_B$ be a nontrivial monomial  polyhedron. 
     Then positive Bergman operator 
      $P^+_{\Uu_B}$ is of restricted type
      $(p_*,p_*)$ from  
      the weighted measure space $(\Uu_B,(-\log w)^{(m-1)(p_*-1)})$ to   the weighted measure space $(\Uu_B,w^{p_*-2})$, i.e., there is a constant $C>0$ (depending only on the domain $\Uu_B$) such that for each measurable $E\subseteq \Uu_{B}$ 
    \begin{equation}
        \label{eq-genest}
        \norm{P^+_{\Uu_{B}}(1_E)}_{L^{p_*}(\Uu_B,w^{p_*-2})}\leq C\norm{1_E}_{L^{p_*}(\Uu_B, (-\log w)^{(m-1)(p_*-1)}) }.
    \end{equation}
\end{thm}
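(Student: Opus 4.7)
The natural approach is to transfer \eqref{eq-genest} from $\Uu_B$ to the polydisc $\D^n$ via the proper monomial map $\phi(\zeta)=\zeta^A$ of Proposition~\ref{prop-mono2}. This map realizes $\Uu_B$ (away from a thin coordinate set) as a finite quotient of $\D^n$ by a group $G$ of diagonal unitary rotations, so the transformation rule for Bergman kernels under proper holomorphic maps writes $K_{\Uu_B}(\phi(\zeta),\phi(\eta))\det\phi'(\zeta)\overline{\det\phi'(\eta)}$ as a $G$-sum of polydisc kernels $K_{\D^n}(\zeta,g\eta)$. Combined with the change-of-variables formula and the fact that $\det\phi'$ is a pure monomial in $\zeta$, this identity rewrites \eqref{eq-genest} as a weighted estimate on $\D^n$ for a positive integral operator whose kernel is a finite sum of tensor products of one-variable Bergman kernels.

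After this reduction, both sides of \eqref{eq-genest} take the form of weighted $L^{p_*}$-norms on $\D^n$ with weights of the type $\prod_k|\zeta_k|^{\gamma_k}$, together with a logarithmic factor $(-\sum_k c_k\log|\zeta_k|)^{(m-1)(p_*-1)}$ on the domain side. The crucial structural observation is that the $n$ polydisc coordinates split in two: $n-m$ \emph{noncritical} ones---corresponding through $A$ to those columns $a_j$ with $\one a_j<\max_k\one a_k$---where the exponents place the associated one-variable weighted positive Bergman operator strictly inside its $L^{p_*}$-boundedness interval, and $m$ \emph{critical} ones, corresponding to the columns achieving $\max_k\one a_k$, where the exponents land exactly at the endpoint. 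The integer $m$ in the statement is precisely the count of these critical coordinates.

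The $n-m$ noncritical directions can then be closed off by Schur's test with monomial test functions of the correct powers. What remains is an endpoint inequality on $\D^m$ for the $m$-fold tensor product of one-variable positive Bergman operators, each weighted by its borderline power. For $m=1$ this is the classical endpoint restricted-type estimate, dual via Proposition~\ref{prop-duality} to the weak-type $(q_*,q_*)$ bound in one variable. For $m\geq 2$ the tensor product produces the familiar $L(\log^+L)^{m-1}$ logarithmic correction on the domain (as in \cite{huowickweaktype}), which, when pulled back via $\phi$, becomes exactly the $(-\log w)^{(m-1)(p_*-1)}$ weight on the right-hand side of \eqref{eq-genest}.

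The main obstacle is the algebraic bookkeeping that tracks the exponents through the change of variables and verifies the dichotomy above: strictly interior to the $L^{p_*}$ range in the noncritical directions (so that Schur's test succeeds with a genuine margin) and exactly on its boundary in the critical ones (so that the endpoint analysis just barely applies). This rests on matching three pieces of data---the row vector $\one A^{-1}$ governing the weight $w$, the entries of $\one A$ governing $p_*$ and $m$, and the row sums of $A$ coming from the Jacobian of $\phi$---whose compatibility is a linear-algebra identity in $A$. Once this weight computation is in place, the tensor-product endpoint argument on $\D^m$ supplies the result.
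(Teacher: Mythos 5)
Your high-level strategy matches the paper's: pull back via the proper monomial cover $\phi(\zeta)=\zeta^A$ of Proposition~\ref{prop-mono2}, use Bell's transformation formula and the change of variables to reduce to an estimate for $f\mapsto P^+_{\D^n}(\rho_\alpha f)$ with $\alpha=\one A-\one$ on the polydisc, and observe that the weights $w,(-\log w)$ on $\Uu_B$ correspond to $\rho_{2\alpha},(-\log\rho_\alpha)$ on $\D^n$ (this is Section~\ref{sec-pullback} and Proposition~\ref{prop-cov2} of the paper). Where the paper and your proposal part ways is in what happens on $\D^n$, and there your argument has a genuine gap.

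You propose to split the polydisc coordinates once and for all into the $m$ critical ones (where $\alpha_j=\|\alpha\|_\infty$) and the $n-m$ noncritical ones, handle the noncritical variables with Schur's test, and declare that ``what remains is an endpoint inequality on $\D^m$,'' citing the $L(\log^+L)^{m-1}$ phenomenon for tensor products of endpoint operators. But the function you are applying the operator to is $1_F$ for an \emph{arbitrary} measurable $F\subset\D^n$, which does not factor across the critical/noncritical split, so the tensor-product structure of the kernel does not reduce the problem to a product of one-variable estimates; after you close off the noncritical directions you have not a characteristic function on $\D^m$ but a genuinely higher-dimensional object, and the needed ``endpoint inequality on $\D^m$'' is precisely the $m=n$ case of Theorem~\ref{thm-polydisc} --- which you have not established, so the argument is circular at exactly the hard step. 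Moreover the appeal to \cite{huowickweaktype} does not supply it: that result concerns the operator mapping the Orlicz space $L(\log^+L)^{n-1}$ into weak $L^1$, which is an estimate at the \emph{other} endpoint and in a different functional framework, not the weighted restricted-type inequality \eqref{eq-poyest1}. The paper's actual substitute for this missing step is entirely different: a geometric decomposition of $F$ into $2^n$ pieces according to which $|w_j|$ are less than $1/2$ (Proposition~\ref{prop-specialcase}), combined with the sublevel-set volume estimate of Lemma~\ref{lem-ualpha} and the dyadic/Jensen Hölder-type inequality of Proposition~\ref{prop-key}, the latter being where the $(-\log\rho_\alpha)^{(p_*-1)(m(\alpha)-1)}$ factor is actually produced. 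None of this machinery appears in your proposal, and without it (or a real alternative) the claimed endpoint inequality on the critical block is unproved.
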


As mentioned earlier, 
a duality argument  may be
applied to (\ref{eq-genest}), yielding an  inequality at the right endpoint:
\begin{prop}\label{prop-sawyer} Let $\mu$ be the measure on $\Uu_B$ given by \[\mu(E)=\int_E (-\log w)^{(m-1)(p_*-1)}dV, \quad E\subset \Uu_B \text{ measurable}\] where $w$ is as in \eqref{eq-vdef-poly}. There is a constant $C>0$ such that for each measurable function $f$ on $\Uu_B$ and for each $y>0$ we have
\begin{equation} \label{eq-sawyer}   \mu\left\{z\in\Uu_B: {\abs{P^+_{\Uu_B}f(z)}}>{(-\log w(z))^{(m-1)(p_*-1)}}\cdot y \right\}\leq \frac{C}{y^{q_*}}\int_{\Uu_B}\abs{f}^{q_*} w^{\frac{2-p_*}{p_*-1}}dV,
\end{equation}
where $q_*$ is the Hölder conjugate of $p_*$.
\end{prop}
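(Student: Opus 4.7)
The plan is to deduce \eqref{eq-sawyer} from the restricted-type estimate \eqref{eq-genest} of Theorem~\ref{thm-genest} by a Lorentz-space duality argument. Abbreviate $\sigma=w^{p_*-2}$ and $\tau=(-\log w)^{(m-1)(p_*-1)}$, so that $\mu=\tau\,dV$, and \eqref{eq-genest} says that $P^+_{\Uu_B}$ is of restricted type $(p_*,p_*)$ from the weighted measure space $(\Uu_B,\tau\,dV)$ to $(\Uu_B,\sigma\,dV)$.

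My first step would be to invoke Proposition~\ref{prop-duality} to upgrade \eqref{eq-genest} to a bounded linear extension $P^+_{\Uu_B}\colon L^{p_*,1}(\Uu_B,\tau\,dV)\to L^{p_*}(\Uu_B,\sigma\,dV)$. Using $(L^{p_*,1})^{\ast}=L^{q_*,\infty}$ and $(L^{p_*})^{\ast}=L^{q_*}$, Lorentz duality then furnishes a bounded adjoint
\[
(P^+_{\Uu_B})^{\ast}\colon L^{q_*}(\Uu_B,\sigma\,dV)\longrightarrow L^{q_*,\infty}(\Uu_B,\tau\,dV),
\]
which is a weak-type $(q_*,q_*)$ bound between these weighted measure spaces. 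Because $|K_{\Uu_B}(z,w)|=|K_{\Uu_B}(w,z)|$, the positive Bergman operator has a symmetric nonnegative integral kernel and is self-adjoint in unweighted $L^2$; a direct Fubini computation of $\langle P^+_{\Uu_B}f,g\rangle_{L^2(\sigma\,dV)}$, in the spirit of \eqref{eq-padjoint}, then produces
\[
(P^+_{\Uu_B})^{\ast}g \;=\; \tau^{-1}\,P^+_{\Uu_B}(\sigma g).
\]

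Substituting $g=\sigma^{-1}f=w^{2-p_*}f$ in the weak-type inequality for $(P^+_{\Uu_B})^{\ast}$ turns the level set $\{|(P^+_{\Uu_B})^{\ast}g|>y\}$ into $\{|P^+_{\Uu_B}f|>y\,\tau\}$, reproducing the left-hand side of \eqref{eq-sawyer}. On the right, the $L^{q_*}(\sigma\,dV)$-norm of $g$ becomes $\int_{\Uu_B}|f|^{q_*}\,w^{(2-p_*)q_*+(p_*-2)}\,dV$, and the identity $q_*-1=1/(p_*-1)$ collapses the exponent to $(2-p_*)/(p_*-1)$, matching \eqref{eq-sawyer} exactly. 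The only real subtlety, and the point that needs care, is verifying that the bounded extension of $P^+_{\Uu_B}$ produced by Lorentz duality agrees almost everywhere with the defining integral, so that the adjoint identity is valid pointwise on $L^{q_*}(\Uu_B,\sigma\,dV)$; this is routine for nonnegative $f$ and $g$ via simple-function approximation and Tonelli's theorem, and the general case follows by linearity after the standard decomposition into real, imaginary, positive, and negative parts. Beyond this extension argument, the derivation is bookkeeping with exponents.
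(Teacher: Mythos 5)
Your proposal is correct and follows essentially the same path as the paper's proof of Proposition~\ref{prop-sawyer}. The paper sets $\mu_0=(-\log w)^{(m-1)(p_*-1)}$ and $\nu_0=w^{p_*-2}$ (your $\tau$ and $\sigma$), invokes Proposition~\ref{prop-duality} to pass from $\norm{P^+_{\Uu_B}}_{\mathrm{RT}(p_*)}<\infty$ to $\norm{(P^+_{\Uu_B})^*}_{\mathrm{WT}(q_*)}<\infty$, computes the weighted adjoint by Tonelli exactly as in \eqref{eq-padjoint} to arrive at $(P^+_{\Uu_B})^*g=\mu_0^{-1}P^+_{\Uu_B}(\nu_0 g)$, and then makes the same substitution $f=\nu_0 g$ and the same exponent bookkeeping $\nu_0^{1-q_*}=w^{\frac{2-p_*}{p_*-1}}$. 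You phrase the duality step via Lorentz-space duals $(L^{p_*,1})^*=L^{q_*,\infty}$ and $(L^{p_*})^*=L^{q_*}$, whereas the paper packages the same fact as the norm identity $\norm{T}_{\mathrm{RT}(p')}=\norm{T^*}_{\mathrm{WT}(p)}$ proved by a direct computation from the equivalent norm \eqref{eq-equivalent-norm-1}; this is a cosmetic difference, not a different method, and indeed the paper explicitly notes the Lorentz-space interpretation of the restricted type condition right after \eqref{eq-restricted}.
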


Inequality \eqref{eq-sawyer} indicates possible distinct mapping behaviors of the Bergman projection
between the cases $m=1$ and $m>1$.  When $m=1$,  there is a unique largest element in the $n$-tuple
$ (\one a_1,\dots, \one a_n)$. 
In this case, the  estimate \eqref{eq-sawyer} can be reduced to be an unweighted one and can then
be used in an interpolation argument to recover $L^p$ regularity results. Thus Theorem~\ref{thm-genest} leads to the following corollary via Proposition~\ref{prop-sawyer}:
\begin{co}\label{cor-m=1}
    Suppose that the nontrivial monomial polyhedron $\Uu_B$ satisfies the 
    condition $m=1$,  let $p_*$ be as in \eqref{eq-pstar1}, and let $q_*$ be the Hölder conjugate of $p_*$. Then $P_{\Uu_B}^+$ and $P_{\Uu_B}$ are
    \begin{enumerate}
        \item of restricted type $(p_*,p_*)$.
        \item of strong type $(p,p)$ for each $p_*<p<q_*$.
        \item of weak type $(q_*,q_*).$
    \end{enumerate}
\end{co}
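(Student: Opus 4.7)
The plan is to deduce all three conclusions by specializing Theorem~\ref{thm-genest} and Proposition~\ref{prop-sawyer} to the case $m=1$, in which the logarithmic factor $(-\log w)^{(m-1)(p_*-1)}$ collapses to $1$, so the weighted measure spaces appearing in those statements simplify dramatically; the mapping properties of $P_{\Uu_B}$ then follow from those of $P^+_{\Uu_B}$ via the pointwise bound $\abs{P_{\Uu_B}f}\leq P^+_{\Uu_B}\abs{f}$.

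First, for part (1), with $m=1$ Theorem~\ref{thm-genest} reduces to
\[
\norm{P^+_{\Uu_B}(1_E)}_{L^{p_*}(\Uu_B, w^{p_*-2})}\leq C\norm{1_E}_{L^{p_*}(\Uu_B)}.
\]
Since $0\leq w\leq 1$ on $\Uu_B$ and $p_*-2<0$, the weight $w^{p_*-2}$ is $\geq 1$, so the left-hand side dominates the unweighted $L^{p_*}$ norm of $P^+_{\Uu_B}(1_E)$, giving restricted type $(p_*,p_*)$ for $P^+_{\Uu_B}$ on $(\Uu_B,dV)$.

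Next, for part (3), Proposition~\ref{prop-sawyer} with $m=1$ becomes a genuinely unweighted distributional estimate on the left, and the weight on the right is $w^{(2-p_*)/(p_*-1)}$; since $1<p_*<2$ the exponent $(2-p_*)/(p_*-1)$ is positive, so $w^{(2-p_*)/(p_*-1)}\leq 1$, and we obtain
\[
\abs{\{z\in \Uu_B: \abs{P^+_{\Uu_B}f(z)}>y\}}\leq \frac{C}{y^{q_*}}\int_{\Uu_B}\abs{f}^{q_*}dV,
\]
which is exactly weak type $(q_*,q_*)$ for $P^+_{\Uu_B}$. For part (2), the restricted type $(p_*,p_*)$ from step (1) implies restricted weak type $(p_*,p_*)$ by Chebyshev's inequality, and the weak type $(q_*,q_*)$ from step (3) trivially implies restricted weak type $(q_*,q_*)$. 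The Stein--Weiss interpolation theorem for restricted weak type operators then promotes these endpoint estimates to strong type $(p,p)$ for every $p_*<p<q_*$.

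Since these three statements are established for $P^+_{\Uu_B}$, the corresponding conclusions for $P_{\Uu_B}$ follow immediately from $\abs{P_{\Uu_B}f(z)}\leq P^+_{\Uu_B}\abs{f}(z)$. There is no serious obstacle here: the substantive work is already inside Theorem~\ref{thm-genest}, and the only point requiring care is checking the signs of the exponents $p_*-2$ and $(2-p_*)/(p_*-1)$ so that the weights can be discarded in the correct direction on each side of the two endpoint inequalities.
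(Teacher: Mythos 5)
Your proof is correct and follows essentially the same route as the paper: specialize the weighted estimates of Theorem~\ref{thm-genest} and Proposition~\ref{prop-sawyer} to $m=1$, use the sign of the exponents and $0\leq w\leq 1$ to discard the weights, and then interpolate between restricted weak type $(p_*,p_*)$ and restricted weak type $(q_*,q_*)$ via the Stein--Weiss restricted weak-type interpolation theorem, transferring all conclusions from $P^+_{\Uu_B}$ to $P_{\Uu_B}$ by pointwise domination. The only cosmetic difference is that the paper passes from $P^+_{\Uu_B}$ to $P_{\Uu_B}$ via the norm inequality $\norm{P_{\Uu_B}(1_E)}_{L^p}\leq\norm{P^+_{\Uu_B}(1_E)}_{L^p}$ rather than the pointwise bound, but these are the same observation.
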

Corollary \ref{cor-m=1} recovers previous known $L^p$ boundedness results  on many interesting domains including the 
classical Hartogs triangle $\h$ of \eqref{eq-class-hartogs}, the
generalized Hartogs triangles $\{\abs{z_1}^{\frac{a}{b}}<\abs{z_2}<1\}\subset\cx^2$, where $a,b$ are coprime positive integers, and more generally the $n$-dimensional Hartogs triangles considered in \cite{park,chen17,zhang1}:
\[ \{z\in \cx^n: \abs{z_1}^{k_1}< \abs{z_2}^{k_2}< \dots < \abs{z_n}^{k_n} <1\},\]
with $(k_1,\dots, k_n)$ being a tuple of positive integers of gcd 1. 

When $m>1$, on the other hand, the weight involving $-\log w$ in (\ref{eq-genest}) cannot be dropped, thus suggesting the possible failure of both unweighted restricted type $(p_*,p_*)$ and weak type $(q_*,q_*)$ estimates for $P_{\Uu_B}$. We will prove the following:
\begin{thm}
    \label{thm-irreg1}
There are monomial polyhedra $\Uu_B$  with the Bergman projection $P_{\Uu_B}$ not of restricted  weak type $(p_*,p_*)$,
where $p_*$, as in \eqref{eq-pstar}, is the left endpoint of the interval of $L^p$-boundedness. 
\end{thm}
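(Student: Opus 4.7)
My plan is to exhibit a nontrivial monomial polyhedron $\Uu_B$ with $m\geq 2$ together with a family of test characteristic functions witnessing a logarithmic violation of the restricted weak type $(p_*,p_*)$ inequality. The approach is motivated by the sharpness of the weight $(-\log w)^{(m-1)(p_*-1)}$ on the right-hand side of Theorem \ref{thm-genest}: when $m\geq 2$, this weight cannot be dropped, suggesting that unweighted characteristic-function estimates at the endpoint must fail by a logarithmic amount. For concreteness I would work with the three-dimensional example corresponding to
\[
B=\begin{pmatrix}1 & -1 & -1\\0 & 1 & 0\\0 & 0 & 1\end{pmatrix},\qquad\Uu_B=\{z\in\cx^3:|z_1|<|z_2z_3|,\ |z_2|,|z_3|<1\},
\]
for which $A=B^{-1}$, $\one A=(1,2,2)$, $m=2$, $p_*=\tfrac{4}{3}$, and $w(z)=|z_2z_3|$.

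\medskip

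The map $\Psi(z)=(z_1/(z_2z_3),z_2,z_3)$ is a biholomorphism of $\Uu_B$ onto $\D\times(\D^*)^2$, and Bell's transformation formula applied to the weighted Bergman kernel of $\D^3$ with weight $|u_2u_3|^2$ yields the closed form
\[
K_{\Uu_B}(z,w)=\frac{1}{\pi^3}\cdot\frac{z_2\overline{w_2}\,z_3\overline{w_3}}{(z_2z_3\overline{w_2w_3}-z_1\overline{w_1})^2}\cdot\frac{(2-z_2\overline{w_2})(2-z_3\overline{w_3})}{(1-z_2\overline{w_2})^2(1-z_3\overline{w_3})^2}.
\]
The kernel exhibits two ``Hartogs-type'' singular factors $(1-z_j\overline{w_j})^{-2}$, $j=2,3$, which correspond to the two columns of $A$ with maximal sum; these are the source of the $(m-1)(p_*-1)$ exponent in the log weight of Theorem \ref{thm-genest} and will drive the endpoint failure.

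\medskip

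I would then test on characteristic functions $1_{E_\epsilon}$ whose lifts to the covering polydisc consist of products of angular-sector annuli in the $u_2,u_3$ coordinates, with matching angular-sector restrictions on $u_1$; the angular restrictions are necessary to prevent trivial cancellation of non-constant Fourier modes. Writing $P_{\Uu_B}(1_{E_\epsilon})(z)$ through the monomial/Fourier expansion of $K_{\Uu_B}$ reduces the computation to an iterated integral in which two of the variables are integrated against the one-variable kernel $(1-z_j\overline{\zeta})^{-2}$ over angular-sector annuli approaching the unit circle. Two successive applications of the classical log estimate
\[
\int_{\{r_0<|\zeta|<1-\epsilon,\ 0<\arg\zeta<\pi/2\}}\frac{e^{i\theta}\,dA(\zeta)}{(1-z\overline{\zeta})^2}\gtrsim \log\frac{1}{(1-|z|)\vee\epsilon}
\]
in the directions $j=2,3$ then produce the lower bound $|P_{\Uu_B}(1_{E_\epsilon})(z)|\gtrsim L_2(z)L_3(z)$, with $L_j(z)=\log\bigl(1/((1-|z_j|)\vee\epsilon)\bigr)$, on a suitable target region. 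Optimizing the design of $E_\epsilon$ and the threshold $y$ to make the ratio
\[
\sup_y\frac{y^{p_*}\bigl|\{z:|P_{\Uu_B}(1_{E_\epsilon})(z)|>y\}\bigr|}{|E_\epsilon|}
\]
unbounded in $\epsilon$ contradicts restricted weak type $(p_*,p_*)$.

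\medskip

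The principal obstacle is precisely this final optimization. Two issues arise. First, the signed kernel $K_{\Uu_B}$ can produce cancellations that weaken the lower bound on $P_{\Uu_B}(1_{E_\epsilon})$ compared to the easier positive majorant $P^+_{\Uu_B}$; this is handled by a careful mode-by-mode analysis of the monomial expansion of $K_{\Uu_B}$, identifying a leading Fourier mode that contributes constructively on the target region and bounding the remainder by a geometric series. Second, I expect that straightforward single-scale corner-box choices such as $E_\epsilon=\{1-\epsilon<|z_j|<1,\ j=2,3\}$ yield a ratio that remains bounded in $\epsilon$; overcoming this obstruction requires taking $E_\epsilon$ concentrated at multiple scales (for instance, as a union of nested annular shells $\{\epsilon^{k+1}<1-|z_j|<\epsilon^k\}$ for a range of $k$) so that the monomial expansion of $K_{\Uu_B}$ produces logarithmic contributions across distinct shells that reinforce each other in the distribution function. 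Once the counterexample is established in this prototypical $n=3$ case, the same mechanism extends to any nontrivial monomial polyhedron with $m\geq 2$ by restricting attention to the $m$ symmetric columns of $A$ realizing the maximum in \eqref{eq-m}.
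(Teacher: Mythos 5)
Your choice of example and of the threshold parameter $m=2$ is sound, and you are right that cancellation in the signed kernel forces some angular-sector constraint on the test sets. But the proposal locates the source of the endpoint failure in the wrong factor of the kernel, and this misdirection makes the rest of the outline unworkable.

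Concretely: the failure at $p_*$ is not driven by the polydisc-type factors $(1-z_j\overline{w_j})^{-2}$. Those factors are precisely the ones governed by the $L^p$-boundedness of the polydisc Bergman projection for all $1<p<\infty$; concentrating your test sets on angular-sector annuli ``approaching the unit circle'' lands you in a region where $\rho_\alpha$ is comparable to $1$, so the weighted transfer reduces to unweighted polydisc estimates at $p_*=\tfrac43>1$, which are bounded. Iterating the one-variable log lower bound in the $j=2,3$ directions therefore cannot produce a violation at $p_*$; that mechanism detects the polydisc endpoint $p=1$, not $p_*$. After pulling back by the monomial map $\phi(u)=u^A$, the dangerous degeneracy sits on the axes where the Jacobian $\det\phi'=\det A\cdot u^\alpha$ vanishes: in your indexing that is $u_2u_3=0$, and it is the coincidence of the two maximal column sums (the $m=2$ condition) that makes the weighted estimate degenerate there. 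The paper's construction accordingly drives the characteristic set toward $u_2=u_3=0$, not toward the distinguished boundary: its test sets (translated into your coordinates) look like $\{|u_2u_3|<s,\ s<|u_2|<\sqrt{s}\}$ intersected with an angular sector, and the target set where $|P_{\D^n}(\rho_\alpha 1_{F_s})|/\rho_\alpha$ is large is a fixed bidisc $\{|u_2|,|u_3|<r\}\times D$ well inside $\D^n$. No multi-scale union of annular shells is needed; a single scale set of the above shape already carries the extra $\log(1/s)$ in its volume, and the crucial lower bound $|h_s|\gtrsim s^{\alpha_1+2}\log(1/s)$ on $\Pi$ comes from the degenerate $\gamma_1=\gamma_2$ case in the monomial coefficient formula, not from an iterated log estimate near the edge. (As a side note, the closed-form kernel you wrote for your example carries a spurious bounded factor $(2-z_j\overline{w_j})$ — Bell's transformation here is against the unweighted $\D^3$ kernel since the puncture is removable — but this is harmless compared with the structural issue above.)

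To repair the argument you would need to replace the ``annuli approaching $|u_j|=1$'' by sets collapsing toward the axes $u_2u_3=0$ with the shape described above (the sector constraint can be imposed on $\mathrm{Arg}(u^{\alpha+\one-b})$ rather than on each $u_j$ separately), and extract the log by a Fourier coefficient comparison rather than by kernel integrals in the boundary-variable directions. Your instinct that a naive corner box near the edge yields a bounded ratio is correct — but the fix is to move to the singular vertex, not to add more corner boxes.
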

An example of  such a $\Uu_B$ is
\begin{equation}
    \label{eq-Bpoly}
    \Uu_B=\left\{z\in \cx^3: \abs{z_1z_3}<\abs{z_2}<\abs{z_1}<1\right\},
\end{equation}
for which $p_*=\frac{4}{3}$ 
(see Proposition~\ref{prop-example}.)
Therefore, for the domain $\Uu_B$ in \eqref{eq-Bpoly}, neither the Bergman projection $P_{\Uu_B}$ nor
the positive Bergman projection $P_{\Uu_B}^+$ is  of restricted weak type $(p_*,p_*)$.  It follows that both $P_{\Uu_B}$ and $P_{\Uu_B}^+$ are not of restricted type $(p_*, p_*)$, nor of weak type $(p_*, p_*)$ as well.
Since $P_{\Uu_B}$ is not of restricted type $(p_*,p_*)$,  a duality argument (see Proposition~\ref{prop-duality}) then implies that 
$P_{\Uu_B}$ is not of weak type $(q_*,q_*)$ . This result disagrees with  the claim in a recent paper of Tang, Tu and Zhang (see \cite[Theorem~1.2]{weak1}) that on each monomial polyhedron, the Bergman projection is of weak type $(q_*, q_*)$, where $q_*$ is the right endpoint of the interval of $L^p$-boundedness. 
The error there stems from the last inequality on page 14 of their paper
 \[ 2(\one a_j-1)+2\one a_{j_0} \frac{(\sigma-2\one B_{-})a_j}{(2\one B_{-}-\sigma)a_{j_0}}>-2, \quad j=1,\dots, n, j\not= j_0\]
which does not hold if $m>1$, because then the left hand side is equal to $-2$. 
When $m>1$, extra $\log$ terms will appear in the relevant integral computations, eventually making the conclusion false. 
 However, the argument in \cite{weak1} is correct for the case $m=1$ and therefore gives a direct proof 
 of part  (3) of Corollary~\ref{cor-m=1} above, without recourse to duality arguments. In a future publication, we plan to analyze in detail the weak type $(p_*,p_*)$ irregular behavior of the Bergman projection for general monomial polyhedra with $m>1$. This question is especially interesting in dimension $n=2$, where in all known cases, the Bergman projection is of weak type $(q_*, q_*).$

It is worth noting that for the case $m>1$  we  can no longer adopt our weighted estimates at the endpoint $p=p_*$ in an interpolation argument to recover unweighted $L^p$ strong type estimates of $P_{\Uu_B}$ (as 
in case (2) of Corollary~\ref{cor-m=1} above). Nevertheless, the \emph{method} in the  proof of Theorem~\ref{thm-genest}  gives a unified approach to these results regardless of the value of $m$. 

\begin{prop} \label{prop-restricted} For each nontrivial monomial
polyhedron $\Uu_B$, the operator
 $P^+_{\Uu_{B}}$ is of restricted type $(p,p)$ for $p\in (p_*,2]$.
\end{prop}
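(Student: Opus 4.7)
The plan is to re-execute the argument of Theorem~\ref{thm-genest} at an arbitrary exponent $p\in(p_*,2]$ in place of the endpoint $p_*$. Since $0\le w\le 1$ on $\Uu_B$ and $p-2\le 0$, the weight $w^{p-2}$ is pointwise $\ge 1$, so
\[\norm{P^+_{\Uu_B}(1_E)}_{L^p(\Uu_B)}\;\le\;\norm{P^+_{\Uu_B}(1_E)}_{L^p(\Uu_B,\,w^{p-2})}.\]
It therefore suffices to prove the weighted restricted type bound
\[\norm{P^+_{\Uu_B}(1_E)}_{L^p(\Uu_B,\,w^{p-2})}\;\le\;C\,\abs{E}^{1/p}\]
for each measurable $E\subset \Uu_B$ and each $p\in(p_*,2]$; this is exactly the analogue of Theorem~\ref{thm-genest}, but with no logarithmic factor on the right.

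To establish this analogue, I would follow the same structural reduction used in the proof of Theorem~\ref{thm-genest}: pull $P^+_{\Uu_B}$ back to a punctured polydisc via the branched covering $\pi:\zeta\mapsto\zeta^A$ of Proposition~\ref{prop-mono2}, and apply the transformation law for the Bergman kernel to decompose $\abs{K_{\Uu_B}(z,w)}$ into a finite sum of positive kernels built from polydisc Cauchy-type factors multiplied by monomial Jacobian factors. I would then apply the same H\"older (or Schur-type) splitting of $\int_E\abs{K_{\Uu_B}(z,w)}\,dV(w)$ as at $p_*$, but with H\"older exponents $(p,p/(p-1))$ in place of $(p_*,q_*)$ and with the output weight $w^{p-2}\,dV$ still in place.

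The argument reduces, as in the proof of Theorem~\ref{thm-genest}, to checking the finiteness of certain one-dimensional radial integrals; the critical convergence condition is $\one a_j(2-p)<p$ for every $j=1,\dots,n$, which by \eqref{eq-pstar1} is exactly $p>p_*$. At the endpoint $p=p_*$ equality is attained for the $m$ indices realizing the maximum, and this borderline divergence is what forces the weight $(-\log w)^{(m-1)(p_*-1)}$ into Theorem~\ref{thm-genest}. For any $p\in(p_*,2]$ the strict inequality delivers convergence with room to spare, no logarithmic compensation is needed, and the proof closes uniformly in $m$.

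The main obstacle is purely technical bookkeeping: carefully tracking the exponents $\one a_j$, the monomial Jacobian factors, and the output weight $w^{p-2}\,dV$ through the H\"older split, so that the final constant depends only on $p$ and $B$ and no residual weight factors appear on the right. Once that is done, no interpolation with the $L^2$-endpoint is required, which is precisely why this direct method works uniformly for every value of $m$ (including the case $m>1$ excluded by the interpolation-based route used in Corollary~\ref{cor-m=1}).
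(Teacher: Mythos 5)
Your first two reductions --- the domination $\norm{P^+_{\Uu_B}(1_E)}_{L^p(\Uu_B)}\le \norm{P^+_{\Uu_B}(1_E)}_{L^p(\Uu_B,w^{p-2})}$ (since $w^{p-2}\ge 1$) and the pullback via $\phi(\zeta)=\zeta^A$ and Proposition~\ref{prop-cov2} --- are exactly the paper's. The third step, however, diverges from the paper and is the weak point. You describe it as ``the same H\"older (or Schur-type) splitting as at $p_*$,'' but the paper's proof of Theorem~\ref{thm-genest} is not a Schur test at all: it invokes Theorem~\ref{thm-polydisc}, whose engine is the dyadic decomposition of $\D^n$ by level sets of $\rho_\alpha$ together with Lemma~\ref{lem-gamma} and a Jensen inequality (Proposition~\ref{prop-key}). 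For the present statement the paper simply calls inequality \eqref{eq-polyest2}, namely
\[
\norm{P^+_{\D^n}(\rho_\alpha\cdot 1_F)}_{L^p(\D^n)}^p\lesssim \int_F\rho_{2\alpha}\,dV\qquad (p\in(p_*,2],\ \alpha=\one A-\one),
\]
followed by a change of variables. This estimate is strictly sharper than what unweighted $L^p$-boundedness of $P^+_{\D^n}$ gives: boundedness would yield $\int_F\rho_{p\alpha}\,dV$ on the right, and $\rho_{p\alpha}\ge\rho_{2\alpha}$ for $p<2$, so the inequality would point the wrong way. Likewise, a weighted Schur test with weight $\rho_{(2-p)\alpha}$ (the one for which your condition $\one a_j(2-p)<p$ is indeed the correct threshold and does encode $p>p_*$) yields boundedness of $P^+_{\D^n}$ on $L^p(\D^n,\rho_{(2-p)\alpha})$, which gives a \emph{weighted} $L^p$ norm of $P^+_{\D^n}(\rho_\alpha 1_F)$ on the left --- again weaker than the unweighted norm needed in \eqref{eq-polyest2}, since $\rho_{(2-p)\alpha}\le 1$. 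So the precise ``bookkeeping'' you defer is not mere bookkeeping: the Schur machinery, as described, does not produce your intermediate weighted restricted bound. A valid Schur-test route to the proposition's \emph{conclusion} does exist --- prove strong type $(p,p)$ for $P^+_{\Uu_B}$ as in \cite{CJM} and observe that strong type trivially implies restricted type --- but this route bypasses your intermediate weighted inequality entirely and is not the unified path through Theorem~\ref{thm-polydisc} that the paper uses (and that makes the endpoint case $p=p_*$ a consequence of the very same polydisc lemma).
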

As a consequence, we recapture a strengthened version of the main result of \cite{CJM}:
\begin{co}
    \label{co-cjm}
    For $p\in (p_*,q_*)$, both $P^+_{\Uu_{B}}$ and
    $P_{\Uu_{B}}$ are of strong type $(p,p).$
\end{co}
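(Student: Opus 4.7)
The plan is to bootstrap from Proposition~\ref{prop-restricted} using duality and Marcinkiewicz interpolation. By Proposition~\ref{prop-restricted}, $P^+_{\Uu_B}$ is of restricted type $(p,p)$ for each $p\in(p_*,2]$, hence of restricted weak type $(p,p)$ on the same range (by the Chebyshev--Markov inequality already noted in the text after \eqref{eq-def-rwt}).

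Next, I would pass to the right half of the interval by duality. The integral kernel $\abs{K_{\Uu_B}(z,w)}$ of $P^+_{\Uu_B}$ is real, nonnegative, and symmetric in $(z,w)$, since $K_{\Uu_B}(z,w)=\overline{K_{\Uu_B}(w,z)}$; hence $P^+_{\Uu_B}$ is formally self-adjoint with respect to the unweighted $L^2$-pairing. Proposition~\ref{prop-duality}, which in the form invoked here is simply the Lorentz-space duality between $L^{p,1}$ and $L^{p',\infty}$, converts restricted type $(p,p)$ of a self-adjoint operator into weak type $(p',p')$. Applied to each $p\in(p_*,2]$, this yields weak type $(p',p')$ of $P^+_{\Uu_B}$ for every $p'\in[2,q_*)$.

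With $P^+_{\Uu_B}$ now of restricted weak type at every point of $(p_*,2]$ and of weak type at every point of $[2,q_*)$, the Marcinkiewicz interpolation theorem upgrades the endpoint estimates to strong type. Given any $p\in(p_*,q_*)$, pick $p_0\in(p_*,\min(p,2))$ and $p_1\in(\max(p,2),q_*)$; interpolation then produces the strong type $(p,p)$ bound for $P^+_{\Uu_B}$. Taking the union over admissible pairs $(p_0,p_1)$ covers the full open interval $(p_*,q_*)$, including the value $p=2$.

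Finally, the pointwise majorization $\abs{P_{\Uu_B}f(z)}\leq P^+_{\Uu_B}(\abs{f})(z)$ transfers the strong type bounds from $P^+_{\Uu_B}$ to $P_{\Uu_B}$. No part of this argument is delicate on its own; all the real work sits inside Proposition~\ref{prop-restricted}, and the only point that requires a moment's attention is verifying that the duality hypothesis of Proposition~\ref{prop-duality} applies, for which the symmetry of $\abs{K_{\Uu_B}}$ suffices.
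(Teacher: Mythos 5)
Your proposal is correct and follows essentially the same route as the paper: use Proposition~\ref{prop-restricted} on $(p_*,2]$, pass through Proposition~\ref{prop-duality} and the self-adjointness of $P^+_{\Uu_B}$ (symmetry of $\abs{K_{\Uu_B}}$) to get weak type on $[2,q_*)$, and interpolate. The only cosmetic difference is that the paper fixes a single $p_0$ and interpolates between the Hölder-conjugate pair $(p_0,p_0')$, while you pick two independent points $p_0 < p < p_1$; both work. One small imprecision worth flagging: you invoke ``the Marcinkiewicz interpolation theorem,'' but on the left endpoint you only have \emph{restricted} weak type, and the classical Marcinkiewicz theorem assumes genuine weak type at both endpoints. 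What is actually needed (and what the paper cites) is the Stein--Weiss restricted weak-type interpolation theorem, which is exactly designed to accept restricted weak type hypotheses and still yield strong type in the interior; since weak type at $p_1$ implies restricted weak type there, the hypotheses are met. If you keep the word ``Marcinkiewicz,'' add the qualifier ``in its restricted weak-type form (Stein--Weiss)'' so the invocation matches what you have proved.
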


 Our paper is organized as follows: In Section~\ref{sec-duality}, we recall results regarding  duals of Lorentz spaces and prove Proposition~\ref{prop-sawyer}, Corollaries~\ref{cor-m=1} and \ref{cor-m=1} assuming Theorem~\ref{thm-genest} and Proposition~\ref{prop-restricted}. In Section~\ref{sec-polydisc}, we establish weighted restricted type estimates for the Bergman projection on punctured polydiscs. In Section~\ref{sec-pullback}
 we pull back   restricted estimates for $P_{\Uu_B}$
 via proper monomial mappings  into weighted estimates of the Bergman projection on punctured polydiscs so that results in Section~\ref{sec-polydisc} apply and Theorem~\ref{thm-genest} and Proposition~\ref{prop-restricted} are proved. In Section~\ref{sec-irreg}, we give  a family of monomial polyhedra in Theorem \ref{thm-irreg2} where the Bergman projection fails to be of restricted weak type $(p_*,p_*)$, hence proving Theorem \ref{thm-irreg1}.
\subsection{Acknowledgments} We  thank
 Sagun Chanillo, Luke Edholm, Brett Wick, Liding Yao and Shuo Zhang for enlightening conversations that helped 
 improve this paper. 

\section{Application of duality and interpolation} 
\label{sec-duality}
\subsection{Some results of functional analysis}  Let $(X,\mu)$ be a measure space, where  $\mu$ is   $\sigma$-finite,  and let $1< p <\infty$.  The space $L^{p,\infty}(X,\mu)$ of
weak-$L^p$ functions can be defined by the finiteness 
condition $\norm{f}^*_{p,\infty}<\infty,$
where for a measurable function $f$ on $X$, the extended real number $\norm{f}^*_{p,\infty}$ is defined by
\begin{equation}
    \label{eq-quasi}
    \norm{f}^*_{p,\infty}=\sup_{y>0} \left(y\cdot \mu\left\{x\in X: \abs{f(x)}>y\right\}^{\frac{1}{p}}\right).
\end{equation}
Then $\norm{\cdot}_{p,\infty}^*$ satisfies all the defining properties of a norm expect the triangle inequality. Instead, for measurable $f$ and $g$, we only have
$$\norm{f+g}_{p,\infty}^* \leq 2^{\frac{1}{p}}\left(\norm{f}_{p,\infty}^*+ \norm{g}_{p,\infty}^*\right).$$ The ``quasinorm" $\norm{\cdot}_{p,\infty}^*$ defines a linear topology on the vector space $L^{p,\infty}(X,\mu)$. 
It is well-known that 
this topology is in fact normable. Two equivalent norms on $L^{p,\infty}(X,\mu)$ are
\begin{align}
     \label{eq-equivalent-norm-1}
    &\norm{f}_{p,\infty}=\sup\left\{\mu(E)^{-\frac{1}{p'}} \abs{\int_E f d\mu}: E \text{{ measurable, }} \mu(E)<\infty\right\},\\
      \label{eq-equivalent-norm-2}
    &\norm{f}'_{p,\infty}=\sup\left\{\mu(E)^{-\frac{1}{p'}} \int_E\abs{f} d\mu: E \text{ \emph{measurable}, } \mu(E)<\infty\right\}.
\end{align}
See for example \cite[Exercise 1.1.12]{grafakos1} for the equivalence of $\norm{\cdot}'_{p,\infty}$ and $\norm{\cdot}^*_{p,\infty}$. As to the equivalence of $\norm{\cdot}'_{p,\infty}$ and $\norm{\cdot}_{p,\infty}$, notice that for each measurable function $f$, we have $\norm{f}_{p,\infty}\leq\norm{f}'_{p,\infty}$ from their definitions.  When
$f$ is nonnegative or nonpositive, the reverse inequality holds, and we see that 
$ \norm{f}'_{p,\infty}\leq 4\norm{f}_{p,\infty}$ by splitting the real and
imaginary parts of $f$ into positive and negative parts.

We now consider two $\sigma$-finite measure spaces
$(M,\mu)$ and $(N,\nu)$ and let $T$ be a linear operator
mapping a vector space of measurable functions on $(M,\mu)$ into measurable functions on $(N,\nu).$
We can define the  ``weak type $(p,p)$ operator norm"
by setting
\[\norm{T}_{\mathrm{WT}(p)}=\sup\left\{ \frac{\norm{Tf}_{L^{p,\infty}(N,\nu)}}{\norm{f}_{L^p(M,\mu)}}: f\in L^p(M,\mu), f\not=0\right\},\]
which is just the operator norm of $T$ as a linear mapping from the Banach space $L^p(M,\mu)$ into the Banach space
$\left(L^p(N,\nu), \norm{\cdot}_{L^{p,\infty}(N,\nu)}\right).$ 
The operator 
$T$ is of weak type $(p,p)$ if and only
if $\norm{T}_{\mathrm{WT}(p)}<\infty$.
Similarly, we can define the ``restricted type $(q,q)$ operator norm" of $T$
by setting 
\[ \norm{T}_{\mathrm{RT}(q)} = \sup \left\{\frac{\norm{T1_E}_{L^q(N,\nu)}}{\norm{1_E}_{L^q(M,\mu)}}: E \subset X \text{ measurable}, 0<\mu(E)<\infty\right\}.\]
Again $T$ is of restricted type $(q,q)$ if and  only if  $\norm{T}_{\mathrm{RT}(q)}<\infty,$ and it is possible to interpret this as an operator norm between Lorentz spaces.

We will also need the notion of the \emph{adjoint} of the operator $T$. This is a linear operator $T^*$ mapping a vector space of measurable functions on 
$(N,\nu)$ into the space of measurable functions on $(M,\mu)$, and satisfies
$ \displaystyle{\int_{N}(Tf)\ol{g}d\nu = \int_{M} f\cdot \ol{T^* g}d\mu.}$
 In case $T$ is
of restricted type $(p,p)$, for $f$ simple on $(M,\mu)$ and $g\in L^{p'}(N,\nu),$ the existence 
of $T^*$ follows from a standard argument
involving the Radon-Nikodym theorem (see 
\cite[p. 265]{steinweiss1}). For $T$ of weak type $(p,p)$, for $f\in L^{p'}(M,\mu)$ and $g$ simple on $(N,\mu)$, 
the existence of $T^*$ also can be proved
in a similar way. Notice that we have $(T^*)^*=T$. 
We will now recall the 
following duality statement between the 
weak type norm and the restricted type norm:

\begin{prop}\label{prop-duality}
let $T$ be a linear operator
mapping simple functions on $(M,\mu)$ into measurable functions on $(N,\nu)$,
let $1<p<\infty$ and let $p'$ be the Hölder conjugate of $p$. Then
 \begin{equation}
        \label{eq-operatornorms}
        \norm{T}_{\mathrm{RT}(p')}= \norm{T^*}_{\mathrm{WT}(p)}.
    \end{equation}

   
\end{prop}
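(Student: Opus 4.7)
My plan is to prove the identity by unfolding the restricted-type norm on the left-hand side via $L^{p'}$-$L^p$ duality together with the defining relation of the adjoint, then reassembling what results as the weak-type norm on the right-hand side, using the integral characterization (\ref{eq-equivalent-norm-1}) of the weak-$L^p$ norm. Concretely, I would begin from
\[\norm{T}_{\mathrm{RT}(p')} = \sup_E \frac{\norm{T 1_E}_{L^{p'}(N,\nu)}}{\mu(E)^{1/p'}},\]
with the supremum taken over measurable $E\subset M$ with $0<\mu(E)<\infty$, and rewrite the numerator using the $\sigma$-finite duality between $L^{p'}(N,\nu)$ and $L^p(N,\nu)$,
\[\norm{T 1_E}_{L^{p'}(N,\nu)} = \sup\left\{\left|\int_N (T 1_E)\,\bar g\, d\nu\right|:\norm{g}_{L^p(N,\nu)}\leq 1\right\}.\]
The defining identity for the adjoint then turns the pairing on the right into $\int_E \overline{T^*g}\,d\mu$; the existence of $T^*g$ for $g\in L^p(N,\nu)$ follows from the Radon--Nikodym construction recalled just before the statement of the proposition (and if $\norm{T}_{\mathrm{RT}(p')}=\infty$, the claimed equality is trivial, so I may assume finiteness throughout).

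Next, I would interchange the two suprema, which is legitimate for any double supremum, to obtain
\[\norm{T}_{\mathrm{RT}(p')} = \sup_{\norm{g}_{L^p(N,\nu)}\leq 1}\ \sup_E \frac{\left|\int_E \overline{T^*g}\,d\mu\right|}{\mu(E)^{1/p'}}.\]
The inner supremum is exactly $\norm{\overline{T^*g}}_{p,\infty}=\norm{T^*g}_{p,\infty}$ by formula (\ref{eq-equivalent-norm-1}) applied on $(M,\mu)$. Interpreting $\norm{\cdot}_{L^{p,\infty}(M,\mu)}$ via the equivalent norm $\norm{\cdot}_{p,\infty}$ in the definition of the weak-type operator norm, the remaining outer supremum is precisely $\norm{T^*}_{\mathrm{WT}(p)}$, and the identity (\ref{eq-operatornorms}) drops out.

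The argument is essentially formal, so no step poses a substantive obstacle. The single technical subtlety---which is precisely why the excerpt sets up all three of $\norm{\cdot}^*_{p,\infty}$, $\norm{\cdot}_{p,\infty}$, and $\norm{\cdot}'_{p,\infty}$ and verifies their equivalence before stating the proposition---is that the equality (\ref{eq-operatornorms}), as opposed to a mere two-sided equivalence, hinges on using the specific integral norm $\norm{\cdot}_{p,\infty}$ from (\ref{eq-equivalent-norm-1}) to measure the weak-type output; with the distributional quasinorm $\norm{\cdot}^*_{p,\infty}$ one would only recover equivalence up to a constant depending on $p$. Apart from this choice of norm, the only other point to keep an eye on is that the $L^{p'}$--$L^p$ pairing and the adjoint identity require $\sigma$-finiteness of both $\mu$ and $\nu$, which is in force by assumption.
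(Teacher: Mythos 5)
Your argument is correct and follows essentially the same route as the paper's: unfold the restricted-type norm via $L^{p'}$-$L^{p}$ duality, pass through the adjoint identity, and recognize the resulting expression as the weak-type norm by means of the integral characterization \eqref{eq-equivalent-norm-1}. The only cosmetic difference is that you carry out the single interchange of suprema explicitly and thereby obtain the equality in one pass, whereas the paper proves the $\leq$ inequality first and then remarks that the reverse inequality follows ``similarly'' by interchanging the order of suprema--but this is exactly the step you make explicit, so the two proofs coincide in substance.
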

\begin{proof} 
In \eqref{eq-operatornorms}, it is allowed that the common value of these norms is infinite. 

For a measurable $E\subset M$ of finite nonzero measure we have by duality of the space $L^p(N,\nu)$ with the space $L^{p'}(N,\nu)$
    \begin{align*}
        \norm{T1_E}_{L^{p'}(N,\nu)}&= \sup\left\{\frac{ \abs{\int_N (T1_E)\ol{f}d\nu}}{\norm{f}_{L^p(N,\nu)}}: f\in L^p(N,\nu), f\not =0 \right\}\\
        &= \sup\left\{\frac{ \abs{\int_M 1_E(\ol{T^*f})d\mu}}{\norm{f}_{L^p(N,\nu)}}: f\in L^p(N,\nu), f\not =0 \right\}\\
        &= \sup\left\{\frac{ \abs{\int_E {T^*f} d\mu}}{\norm{f}_{L^p(N,\nu)}}: f\in L^p(N,\nu), f\not =0 \right\}\\
        &=\sup\left\{\frac{1}{\norm{f}_{L^p(N,\nu)}}\frac{ \abs{\int_E {T^*f} d\mu}}{\norm{1_E}_{L^{p'}(M,\mu)}}: f\in L^p(N,\nu), f\not =0 \right\}\cdot \norm{1_E}_{L^{p'}(M,\mu)}\\
        &=\sup\left\{\frac{1}{\norm{f}_{L^p(N,\nu)}} \mu(E)^{-\frac{1}{p'}}\abs{\int_E {T^*f} d\mu}: f\in L^p(N,\nu), f\not =0 \right\}\cdot \norm{1_E}_{L^{p'}(M,\mu)}\\
        &\leq \sup\left\{\frac{\norm{T^*f}_{L^{p,\infty}(M,\mu)}}{\norm{f}_{L^p(N,\nu)}}: f\in L^p(N,\nu), f\not =0 \right\}\cdot \norm{1_E}_{L^{p'}(M,\mu)}\\&= \norm{T}_{\mathrm{WT(p)}}\cdot \norm{1_E}_{L^{p'}(M,\mu)}.
    \end{align*}
    It follows that we have
    $ \norm{T}_{\mathrm{RT}(p')}\leq\norm{T^*}_{\mathrm{WT}(p)}.$
    For the reverse inequality one can proceed similarly
starting from the definition of $  \norm{T^*f}_{L^{p,\infty}(M,\mu)}$, and change the order of taking suprema 
to yield $ \norm{T^*f}_{L^{p,\infty}(M,\mu)} \leq \norm{T}_{\mathrm{RT}(p')}\cdot \norm{f}_{L^p(N,\nu)}$. We leave the details to the reader.
\end{proof}
\subsection{Consequences of Theorem~\ref{thm-genest} and Proposition~\ref{prop-restricted}}

\begin{proof}[Proof of Proposition~\ref{prop-sawyer}]
    Let us set $\mu_0= (-\log w)^{(m-1)(p_*-1)}$ and $\nu_0=w^{p_*-2}$, so that the measures $\mu$ and $\nu$ on $\Uu_B$ can be set to be $d\mu=\mu_0dV$ and $d\nu=\nu_0dV.$
Then thanks to Theorem~\ref{thm-genest},
the operator $P_{\Uu_B}^+$ is of restricted type $(p_*,p_*)$ as an operator from functions on $(\Uu_B, \mu)$
to functions on $(\Uu_B,\nu)$, so that $\norm{P^+_{\Uu_B}}_{\mathrm{RT}(p_*)}<\infty$. Therefore, by Proposition~\ref{prop-duality}, we have 
$\norm{(P^+_{\Uu_B})^*}_{\mathrm{WT}(q_*)}<\infty$ where $(P^+_{\Uu_B})^*$ is the adjoint of $P_{\Uu_B}^+$ mapping functions on $(\Uu_B,\nu)$ to functions on $(\Uu_B, \mu)$  with
$\displaystyle{ \int_{\Uu_B}(P^+_{\Uu_B}f) \ol{g}d\nu = \int_{\Uu_B}f\cdot\ol{(P_{\Uu_B}^+)^* g}\,d\mu}$. Note that
\begin{align}
    \int_{\Uu_B}P^+_{\Uu_B}f(z) \ol{g(z)}d\nu(z)&=\int_{\Uu_B}\left( 
    \int_{\Uu_B} \abs{K_{\Uu_B}(z,\zeta)}f(\zeta)dV(\zeta)\right)\ol{g(z)}\nu_0(z)dV(z)\nonumber\\
    &= \int_{\Uu_B}f(\zeta)\ol{\left(\frac{1}{\mu_0(\zeta)}\int_{\Uu_B}\abs{K_{\Uu_B}(z,\zeta)}\nu_0(z)g(z)dV(z) \right)} d\mu(\zeta),\label{eq-padjoint}
\end{align}
where the change in order of integration can be justified (when $f,g$ are nonnegative) by Tonelli's theorem, and the general justification follows by linearity. Thanks to the symmetry of the kernel $\abs{K_{\Uu_B}}$ defining $P_{\Uu_B}^+$ we have the representation
\[ (P_{\Uu_B}^+)^* g= \frac{1}{\mu_0}P_{\Uu_B}^+(\nu_0 g),\]
for measurable $g$, and we also have the weak type estimate 
\[ \norm{\frac{1}{\mu_0}P_{\Uu_B}^+(\nu_0 g)}^*_{L^{q_*,\infty}(\Uu_B,\mu)} \leq C \norm{g}_{L^{q_*}(\Uu_B,\nu)},\]
where $C$ is a constant independent of $g$ and the left hand side uses the quasinorm \eqref{eq-quasi}. Setting $f=\nu_0g,$ we see that 
\[\norm{\frac{1}{\mu_0}P_{\Uu_B}^+(f)}^*_{L^{q_*,\infty}(\Uu_B,\mu)} \leq C \norm{\frac{f}{\nu_0}}_{L^{q_*}(\Uu_B,\nu)}, \]
which clearly holds for all measurable $f$. Now
\begin{align*}
    \norm{\frac{f}{\nu_0}}_{L^{q_*}(\Uu_B,\nu)}^{q_*} = \int_{\Uu_B} \abs{\frac{f}{\nu_0}}^{q_*}\nu_0 dV = \int_{\Uu_B} \abs{f}^{q_*}\nu_0^{1-q_*} dV=\int_{\Uu_B}  \abs{f}^{q_*}\nu_0^{\frac{1}{1-p_*}} dV = \int_{\Uu_B}  \abs{f}^{q_*}w^{\frac{p_*-2}{1-p_*}} dV.
\end{align*}
The result follows from the definition of the quasinorm \eqref{eq-quasi}. 
\end{proof}


\begin{proof}[Proof of Corollary~\ref{cor-m=1}]
It is sufficient to prove the statements for $P_{\Uu_B}^+$, since for each $p$, $\norm{P_{\Uu_B}(1_E)}_{L^p(\Uu_B)} \leq \norm{P_{\Uu_B}^+ (1_E)}_{L^p(\Uu_B)}$ for each measurable subset $E\subset \Uu_B.$ Since $m=1$, the weight in Theorem~\ref{thm-genest} on the source is reduced to 
\begin{equation}
    \label{eq-weq1}
    (-\log w)^{(1-1)(p_*-1)} \equiv 1,
\end{equation}
so for $E\subset \Uu_B$ is measurable:
\begin{align*}
\norm{P_{\Uu_B}^+ (1_E)}_{L^{p_*}(\Uu_B)}&\leq  \norm{P_{\Uu_B}^+ (1_E)}_{L^{p_*}(\Uu_B,w^{p_*-2})}
&\text{since $w^{p_*-2}\geq 1$}\\
&\leq C \norm{1_E}_{L^{p_*}(\Uu_B)}. & \text{by \eqref{eq-genest}}
\end{align*}
This establishes conclusion (1). 

For conclusion (3), notice that when $m=1$, thanks to \eqref{eq-weq1},  inequality \eqref{eq-sawyer} shows that for each $y>0$
\begin{align*}
 \abs{\left\{z\in\Uu_B:\abs{P^+_{\Uu_B}f(z)}>  y \right\}}&=
   \mu\left\{z\in\Uu_B:\abs{P^+_{\Uu_B}f(z)}>  y \right\}\\&\leq \frac{C}{y^{q_*}}\int_{\Uu_B}\abs{f}^{q_*} w^{\frac{2-p_*}{p_*-1}}dV\\
   & \leq \frac{C}{y^{q_*}}\int_{\Uu_B}\abs{f}^{q_*}\,dV,
\end{align*}
where the last step follows since $1<p_*<2,$ and $0\leq w <1$. Thus, $P^+_{\Uu_B}$ is of weak type $(q_*, q_*).$

For conclusion (2), note that $P^+_{\Uu_B}$ is of restricted type $(p_*,p_*)$ and of weak type $(q_*,q_*)$.
Therefore it is of both restricted weak type $(p_*,p_*)$
and restricted weak type $(q_*,q_*)$. Then by the restricted weak-type interpolation theorem (see \cite{steinweiss1,steinweissbook}), $P_{\Uu_B}$ is of strong type $(p,p)$ whenever $p_*<p<q_*$. 
\end{proof}

\begin{proof}[Proof of Corollary~\ref{co-cjm}]
  Again,  it suffices to prove this for $P_{\Uu_B}^+$.
  For the given $p\in (p_*, q_*)$ choose a $p_0$ in the interval $(p_*,2)$ such that $p\in (p_0, p_0')$ where
  $p_0'$ is the Hölder conjugate of $p_0$. By Proposition~\ref{prop-restricted}, the operator $P_{\Uu_B}^+$ is of restricted type $(p_0,p_0)$, i.e.,
  $\norm{P_{\Uu_B}^+}_{\mathrm{RT}(p_0)}<\infty$, where we think of $P_{\Uu_B}^+$ as a linear operator on functions on the measure space $(\Uu_B,\abs{\cdot})$ to itself, were $\abs{\cdot}$ is Lebesgue measure.
  Therefore, by Propostion~\ref{prop-duality}, the adjoint operator $(P_{\Uu_B}^+)^*$ is of weak type $(p_0', p_0')$. In this case
  $(P_{\Uu_B}^+)^*=P_{\Uu_B}^+$, since as in \eqref{eq-padjoint}
  \begin{align*}
    \int_{\Uu_B}P^+_{\Uu_B}f(z) \ol{g(z)}dV(z)&=\int_{\Uu_B}\left( 
    \int_{\Uu_B} \abs{K_{\Uu_B}(z,\zeta)}f(\zeta)dV(\zeta)\right)\ol{g(z)}dV(z)\\
    &= \int_{\Uu_B}f(\zeta)\ol{\left(\int_{\Uu_B}\abs{K_{\Uu_B}(z,\zeta)}g(z)dV(z) \right)} d V(\zeta),
\end{align*}
  and the absolute Bergman kernel is symmetric. 
  Therefore $P_{\Uu_B}^+$ is of restricted weak type $(p_0,p_0)$ and 
  restricted weak type $(p_0', p_0')$ and therefore it is of 
  strong type $(p,p)$, again by restricted weak-type interpolation theorem (see \cite{steinweiss1,steinweissbook}). 
\end{proof}

\section{An estimate on the polydisc}\label{sec-polydisc}
The proof of Theorem~\ref{thm-genest}
uses the fact that $\Uu_B$ is a so-called
\emph{quotient domain}, i.e., there is a ramified covering map $\phi:\Omega\to \Uu_B$ where $\Omega$ is a so-called 
punctured polydisc (i.e. a product of 
unit discs and punctured unit discs). This allows us to ``pull back" the problem to the covering domain $\Omega.$ Since an integral operator on $\Omega$ can be thought of as an integral operator on the polydisc $\D^n$, we can reduce the problem to an estimate on the polydisc. In Section~\ref{sec-polydisc} below, we establish
this equivalent estimate. Using it, we will complete the proof for Theorem~\ref{thm-genest} in Section~\ref{sec-pullback}. 
\subsection{Notation and statement of result}
Let $\displaystyle{\norm{\alpha}_\infty = \max_{1\leq k\leq n}\{\alpha_k\}}$
be the largest element of the tuple $\alpha\in  \rl^n$, and let
\begin{equation}
    \label{eq-malpha}
    m(\alpha)=\#{\{j: \alpha_j= \norm{\alpha}_\infty\}}
\end{equation}
be the number of largest elements in the tuple $\alpha$.
We will prove the following, where in \eqref{eq-poyest1} and \eqref{eq-polyest2}, $\rho_\alpha$ and $\rho_{2\alpha}$ stand for the function introduced in \eqref{eq-rhodef}.

\begin{thm}
    \label{thm-polydisc}
    Let $\alpha=(\alpha_1,\dots,\alpha_n)$ be a tuple of
    nonnegative numbers such that $\alpha\not=0$.
    Set 
       \begin{equation}
          \label{eq-pstar}
          p_*= \frac{2 \norm{\alpha}_\infty+2}{\norm{\alpha}_\infty+2}.
      \end{equation}
          \begin{enumerate}
        \item There is a constant $C>0$ such that for each measurable subset $F\subset \D^n$:
        \begin{equation}
            \label{eq-poyest1}\norm{  P^+_{\D^n}(\rho_\alpha\cdot 1_F)}_{L^{p_*}(\D^n)}^{p_*} \leq C 
        \int_F \rho_{2\alpha}(-\log \rho_\alpha)^{(p_*-1)(m(\alpha)-1)}\,dV.
        \end{equation}
        \item For $p\in (p_*,2]$ there is a constant $C_p$ such that for each measurable subset $F\subset \D^n$:
        \begin{equation}
            \label{eq-polyest2}
            \norm{  P^+_{\D^n}(\rho_\alpha\cdot 1_F)}_{L^{p}(\D^n)}^{p} \leq C_p
        \int_F \rho_{2\alpha}\,dV.
        \end{equation}
    \end{enumerate}
\end{thm}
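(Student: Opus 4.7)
The plan is to prove both statements by applying H\"older's inequality directly to the defining integral
\[
P^+_{\D^n}(\rho_\alpha 1_F)(z) = \int_F \rho_\alpha(w) \prod_{j=1}^n \frac{dV(w)}{\pi |1-z_j\bar{w}_j|^2},
\]
with exponents $(p,p')$ and a carefully chosen auxiliary weight $u(w)$. After H\"older and raising to the $p$-th power one obtains
\[
\bigl(P^+_{\D^n}(\rho_\alpha 1_F)(z)\bigr)^{p} \leq \Bigl(\int_F \rho_\alpha(w)^p\, u(w)\,dV(w)\Bigr) \cdot \Bigl(\int_F \frac{u(w)^{-1/(p-1)}\,dV(w)}{\prod_j |1-z_j\bar{w}_j|^{2p'}}\Bigr)^{p-1}.
\]
The idea is to choose $u$ so that the first factor matches the desired right-hand side; integrating in $z$ then reduces the problem to controlling the $z$-integral of the second factor, which is a tensor product of one-dimensional Bergman-type kernels and can be handled by iterated Forelli--Rudin estimates $\int_\D |w|^c|1-z\bar{w}|^{-s}\,dA \lesssim (1-|z|^2)^{c+2-s}$.

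For part (2) with $p\in(p_*,2]$, the natural choice is $u(w)=\rho_{\alpha(2-p)}(w)$, which makes the first H\"older factor equal to $\int_F\rho_{2\alpha}\,dV$. The dual weight $u(w)^{-1/(p-1)}$ is then $\rho_{-\alpha(2-p)/(p-1)}(w)$, with exponent at $w_j$ satisfying
\[
\frac{\alpha_j(2-p)}{p-1} < \frac{\alpha_j(2-p_*)}{p_*-1} = \frac{2\alpha_j}{\|\alpha\|_\infty} \leq 2
\]
precisely because $p>p_*$ (the function $p\mapsto (2-p)/(p-1)$ is strictly decreasing). Hence each one-dimensional integral converges at the origin, the factored estimate from Forelli--Rudin gives a pointwise bound of the form $\prod_j(1-|z_j|^2)^{-e_j}$ with suitable $e_j$, and the outer $z$-integration of this quantity raised to $p-1$ can be arranged to be finite by the same Forelli--Rudin asymptotic on each disc.

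For part (1) at the endpoint $p=p_*$, the above exponent is exactly $-2$ for every $j\in S:=\{j:\alpha_j=\|\alpha\|_\infty\}$, producing a logarithmic divergence at the coordinate hyperplanes $\{w_j=0\}$. To close the estimate we modify the weight to
\[
u(w)=\rho_{\alpha(2-p_*)}(w)\cdot\bigl(-\log\rho_\alpha(w)\bigr)^{(p_*-1)(m(\alpha)-1)},
\]
so that the first H\"older factor now reproduces the right-hand side of \eqref{eq-poyest1} exactly.  The corresponding dual factor picks up $(-\log\rho_\alpha)^{-(m(\alpha)-1)}$, which is designed to compensate for the $m(\alpha)$ borderline singularities $\prod_{j\in S}|w_j|^{-2}$.

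The principal technical obstacle is evaluating this dual integral.  Its integrand involves the non-separable factor $-\log\rho_\alpha(w) = \sum_j \alpha_j\log(1/|w_j|)$ coupled with the borderline powers on coordinates in $S$, so the integral does not factor over $\D^n$ as a tensor product.  The approach is to stratify $\D^n$ according to the relative magnitudes of the $|w_j|$ for $j\in S$: on each stratum the dominant term in $-\log\rho_\alpha$ is identified with a single coordinate, producing a factored upper bound which can then be estimated coordinate-by-coordinate via one-dimensional Forelli--Rudin calculations.  The accounting works out precisely because of the exponent $(p_*-1)(m(\alpha)-1)$: each of the $m(\alpha)$ borderline $|w_j|^{-2}$ factors needs one unit of logarithmic decay to become integrable against the disc kernel, and the outer $z$-integration absorbs one further unit, leaving the $m(\alpha)-1$ units that appear in the weight on the right-hand side.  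This delicate bookkeeping of exponents across the coordinates of $S$ is the heart of the argument and constitutes the main content of Section~\ref{sec-polydisc}.
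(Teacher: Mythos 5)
Your proposal breaks down at the outer $z$-integration, even in the ``easy'' case (2). After pulling out the first H\"older factor $\int_F \rho_{2\alpha}\,dV$, you must integrate the second factor over $z\in\D^n$, and you claim this can be made finite. It cannot. With your choice $u(w)=\rho_{\alpha(2-p)}(w)$, the second factor is bounded by $\prod_{j}\int_\D |w_j|^{-\alpha_j(2-p)/(p-1)}|1-z_j\bar w_j|^{-2p'}\,dA(w_j)$, and Forelli--Rudin gives, for each $j$,
\[
\int_\D |w_j|^{c_j}|1-z_j\bar w_j|^{-2p'}\,dA(w_j)\;\approx\;\bigl(1-|z_j|^2\bigr)^{-\frac{2+\alpha_j(2-p)}{p-1}},\qquad c_j=-\tfrac{\alpha_j(2-p)}{p-1}.
\]
Raising to the power $p-1$ as required and multiplying over $j$, the function to be integrated in $z$ is $\prod_j(1-|z_j|^2)^{-\bigl(2+\alpha_j(2-p)\bigr)}$. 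Since $\alpha_j\geq 0$ and $p\leq 2$, every exponent $2+\alpha_j(2-p)\geq 2>1$, so each one-dimensional $z$-integral is divergent. The choice of $u$ cannot be repaired to fix this: your H\"older split was pinned down by forcing the first factor to equal $\int_F\rho_{2\alpha}\,dV$ exactly, which leaves no free parameter.

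The underlying structural issue is that you have replaced the Schur-test interchange by a crude pull-out. In a genuine Schur argument the first H\"older factor retains a $z$-dependence and is played against the second via Tonelli, so that the $z$-integral of the kernel lands back on the variable $w$; by pulling out an $F$-dependent but $z$-independent scalar, you lose that interchange and are left with a divergent kernel integral. The paper avoids the kernel exponent bookkeeping altogether: it splits $F$ into the pieces $F_\J$ according to which coordinates lie in $\D_{1/2}$, uses that the kernel factor in the ``large'' coordinates is uniformly bounded, invokes only the \emph{qualitative} fact that $P^+_{\D^{n-\ell}}$ is bounded on $L^p$, and then reduces the whole theorem to a purely measure-theoretic H\"older-type inequality (Proposition~\ref{prop-key}) of the form $\int_E\rho_\alpha\,dV\lesssim\bigl(\int_E\rho_{2\alpha}(-\log\rho_\alpha)^\eta\,dV\bigr)^{1/p}$, established by a dyadic decomposition in the value of $\rho_\alpha$ together with the volume estimate of Lemma~\ref{lem-ualpha} and Jensen's inequality. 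That is where the $(m(\alpha)-1)(p_*-1)$ exponent and the $m(\alpha)$ borderline coordinates are reconciled, not in a kernel stratification.
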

\subsection{Preliminary computations}
We will use the following standard notation. Given real valued functions $A,B$ on a set $\mathfrak{X},$ by
$A\lesssim B$ we mean that there is a constant $C>0$ such that $A(x)\leq C B(x)$ for each $x\in \mathfrak{X}.$ The notation $A\approx B$ means that both $A\lesssim B, B \lesssim A$ hold. As usual, we will suppress mentioning the precise set $\mathfrak{X}$ but it should be clear from the context. 
\begin{lem}\label{lem-ualpha}
Let $\alpha=(\alpha_1,\dots,\alpha_n)$ with $\alpha_j\geq 0$ for each $j$ and $\alpha\not=0$. Then for $0<s\leq 1$
\begin{equation}
    \label{eq-ualpha}\abs{\{z\in\mathbb D^n:\rho_\alpha(z)<s\}}
    \approx s^{\frac{2}{\norm{\alpha}_\infty}}\cdot\left(-\log{s}\right)^{m(\alpha)-1},
\end{equation}
where the implied constants in \eqref{eq-ualpha}
are independent of $s$,  but  depend on $\alpha$.
\end{lem}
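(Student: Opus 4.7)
The plan is to reduce the volume computation on the left to a tail integral of a weighted sum of exponentials via polar coordinates and a logarithmic substitution, and then to bound this tail above and below by the claimed quantity.

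First, passing to polar coordinates $z_j=r_je^{i\theta_j}$ on $\D^n$,
\[
\abs{\{z\in\D^n:\rho_\alpha(z)<s\}}=(2\pi)^n\int_{[0,1]^n}\one\!\left[\prod_j r_j^{\alpha_j}<s\right]\prod_j r_j\,dr_j,
\]
and substituting $r_j=e^{-t_j}$ with $t_j\geq 0$ and $u=-\log s\geq 0$ turns the right-hand side into $(2\pi)^n S_\alpha(u)$, where
\[
S_\alpha(u)=\int_{[0,\infty)^n}\one\!\left[\sum_j\alpha_j t_j>u\right]e^{-2\sum_j t_j}\,dt.
\]
Thus it suffices to show $S_\alpha(u)\approx u^{m-1}e^{-2u/M}$ for large $u$, where $M=\norm{\alpha}_\infty$ and $m=m(\alpha)$.

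After a harmless permutation, assume $\alpha_1=\cdots=\alpha_m=M$ and $\alpha_j<M$ for $j>m$ (some of the latter possibly zero). The key step is to integrate out $t_1,\dots,t_m$ first, using the standard convolution identity for $m$ independent exponential variables:
\[
\int_{[0,\infty)^m}f\!\left(\sum_{j\leq m}t_j\right)e^{-2\sum_{j\leq m}t_j}\,dt_{1}\cdots dt_m=\int_0^\infty f(\sigma)\frac{\sigma^{m-1}}{(m-1)!}e^{-2\sigma}\,d\sigma.
\]
Writing $R=\sum_{j>m}\alpha_j t_j$ and $b=(u-R)/M$, the constraint $\sum_j\alpha_jt_j>u$ becomes $\sigma>b_+$, and a direct antidifferentiation gives
\[
\int_b^\infty\sigma^{m-1}e^{-2\sigma}\,d\sigma=e^{-2b}P(b)\qquad(b\geq 0),
\]
where $P$ is an explicit polynomial of degree $m-1$ with nonnegative coefficients and leading coefficient $(m-1)!/2$.

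Substituting $b=(u-R)/M$ pulls out an overall factor $e^{-2u/M}$ and leaves an integral over $t_{m+1},\dots,t_n\geq 0$ whose integrand is $P((u-R)/M)\,e^{-\sum_{j>m}c_j t_j}$, with $c_j:=2(1-\alpha_j/M)>0$. For the upper bound, using $P(b)\lesssim u^{m-1}+1$ together with $\int_0^\infty e^{-c_jt_j}\,dt_j=1/c_j$ yields $\lesssim u^{m-1}e^{-2u/M}$; the leftover contribution coming from $R\geq u$ (where the inner $\sigma$-integral is bounded by a constant) reduces to a lower-dimensional analogue whose maximum weight is $M'<M$, hence decays like $e^{-2u/M'}$ and is negligible. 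For the lower bound, restrict to $R\leq u/2$: then $b\geq u/(2M)$ so $P(b)\gtrsim u^{m-1}$, and the remaining product of one-dimensional integrals converges to a positive constant as $u\to\infty$. Putting the two bounds together gives $S_\alpha(u)\approx u^{m-1}e^{-2u/M}$, i.e.\ $s^{2/\norm{\alpha}_\infty}(-\log s)^{m-1}$.

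The main obstacle is tracking the polynomial-times-exponential structure generated by integrating out the $m$ largest-weight directions: the $\Gamma(m,\cdot)$-type tail of a sum of equally-weighted exponentials is precisely what produces the $u^{m-1}=(-\log s)^{m-1}$ factor, while all smaller $\alpha_j$'s contribute only an $\alpha$-dependent multiplicative constant through the convergent integrals $\int_0^\infty e^{-c_j t_j}\,dt_j$. Variables with $\alpha_j=0$ cause no difficulty: each simply contributes a factor $\int_0^\infty e^{-2t_j}\,dt_j=1/2$ and imposes no constraint in the indicator.
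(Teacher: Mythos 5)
Your argument is correct and takes a genuinely different route from the paper's. The paper induces on the dimension $n$: writing $U_\alpha(s)=\{z\in\D^n:\rho_\alpha(z)<s\}$, it splits off the polydisc $V=\{|z_{n+1}|^{\alpha_{n+1}}<s\}$, expresses $\abs{U_\alpha(s)\setminus V}$ as a one-dimensional radial integral of $\abs{U_{\alpha'}(s/|z_{n+1}|^{\alpha_{n+1}})}$, invokes the inductive hypothesis, and evaluates the resulting integral in three cases according to whether $\alpha_{n+1}$ is a strict maximum. Your reduction to the tail $S_\alpha(u)=\int_{[0,\infty)^n}\one\bigl[\sum_j\alpha_j t_j>u\bigr]e^{-2\sum_j t_j}\,dt$, and thence to the upper incomplete Gamma function with shape parameter $m$, is more conceptual: it isolates in one stroke why the exponent $2/\norm{\alpha}_\infty$ and the power $(-\log s)^{m(\alpha)-1}$ appear (decay rate of the dominant exponential, degree of the Gamma-tail polynomial), it handles zero and non-maximal exponents without case splitting, and it can be carried out without induction (the residual $\{R\geq u\}$ term is itself a Gamma tail with the strictly larger rate $2/M'$). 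Two small points. First, the leading coefficient of your $P$ is $\tfrac12$, not $(m-1)!/2$ (integration by parts gives $I_k=\tfrac12 b^k e^{-2b}+\tfrac k2 I_{k-1}$); this is immaterial since only $P(b)\approx b^{m-1}$ for $b\gtrsim 1$ is used. Second, you prove the two-sided bound only for $u$ large, i.e.\ for $s$ bounded away from $1$. That restriction is in fact necessary: when $m(\alpha)>1$ the stated equivalence fails at $s=1$ (the right-hand side vanishes while the left-hand side equals $\pi^n$), and the paper's own case~3 carries the same implicit restriction when it absorbs $\abs{V}\approx s^{2/\norm{\alpha}_\infty}$ into $s^{2/\norm{\alpha}_\infty}(-\log s)^{m(\alpha)-1}$. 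Since the lemma is applied only to the dyadic shells $U_j$ with $j\geq 1$, this is harmless, but it would be worth stating explicitly.
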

  \begin{proof} For simplicity of notation set 
  $U_\alpha(s):=\{z\in\mathbb D^n:\rho_\alpha(z)<s\}.$
  We proceed by induction on the dimension
  $n$.  If $n=1$, then $\alpha=(\alpha_1)$ with $\alpha_1>0$, $\norm{\alpha}_\infty=\alpha_1$, $m(\alpha)=1$, and  $U_\alpha(s)$ is the disc $\left\{\abs{z}<s^{\frac{1}{\alpha_1}}\right\}$ in the plane. Consequently
  $ \abs{U_\alpha(s)}= \pi s^{\frac{2}{\alpha_1}}$,
  establishing the result. 
  
   Suppose the estimate \eqref{eq-ualpha}  
   holds for some $n$.
 To show it for $n+1$, let $\alpha=(\alpha_1,\dots, \alpha_{n+1})$ be an $(n+1)$-tuple of nonnegative
 numbers for which we want to show the analog of \eqref{eq-ualpha} with $n$ replaced by $n+1$. We can assume without loss of generality that 
 $ \alpha_{n+1}\geq \alpha_{n}\geq \dots \geq \alpha_1,$
 so that 
 $ \norm{\alpha}_\infty=\alpha_{n+1}=\alpha_{n}=\dots= \alpha_{n+2-m(\alpha)},$
and $\alpha_k < \norm{\alpha}_\infty$ if $k<n+2-m(\alpha).$ 
Let $V$ be the polydisc:
\[ V=\{z\in \D^{n+1}: \abs{z_{n+1}}^{\alpha_{n+1}}<s\}=\D^n\times \left\{z\in \cx: \abs{z}< s^{\frac{1}{\norm{\alpha}_\infty}}\right\},\]
so that
\begin{equation}
    \label{eq-v1}
    \abs{V}=\pi^n s^{\frac{2}{\norm{\alpha}_\infty}}.
\end{equation} 
Write $z=(z',z_{n+1})\in \cx^n\times \cx$ and
 $\alpha=(\alpha',\alpha_{n+1})\in \rl^n\times \rl.$
Then,
\begin{align}
   U_\alpha(s)\setminus V &= \left\{z\in \D^{n+1}:s \leq\abs{z_{n+1}}^{\alpha_{n+1}}<1, \quad\rho_{\alpha'}(z') < \frac{s}{\abs{z_{n+1}}^{\alpha_{n+1}}} \right\}\nonumber\\
    &=\left\{z\in \D^{n+1}:s \leq\abs{z_{n+1}}^{\alpha_{n+1}}<1, \quad z'\in U_{\alpha'}\left(\frac{s}{\abs{z_{n+1}}^{\alpha_{n+1}}} \right) \right\}\label{eq-v2def}.
\end{align}
By the induction hypothesis
\begin{equation}
    \label{eq-induction}
     \abs{U_{\alpha'}(t)}=\abs{\{z\in \D^n: \rho_{\alpha'}(z_1,\dots, z_n)<t \}}\approx t^{\frac{2}{\norm{\alpha'}_\infty}}\left( -\log {t}\right)^{m(\alpha')-1}.
\end{equation}
From \eqref{eq-v2def}:
 \begin{align}
     \abs{U_\alpha(s)\setminus V }&= \int_{\{s \leq\abs{z_{n+1}}^{\alpha_{n+1}}<1\}}\abs{U_{\alpha'}\left(\frac{s}{\abs{z_{n+1}}^{\alpha_{n+1}}} \right)}dV(z_{n+1})\nonumber\\
     &=2\pi \int_{s^{\frac{1}{\alpha_{n+1}}}}^1 \abs{U_{\alpha'}\left(\frac{s}{{r}^{\alpha_{n+1}}} \right)}rdr\\
     &\approx \int_{s^{\frac{1}{\alpha_{n+1}}}}^1 
     \left(\frac{s}{r^{\alpha_{n+1}}} \right)^{\frac{2}{\norm{\alpha'}_\infty}} \left( \log\left( \frac{r^{\alpha_{n+1}}}{s}\right)\right)^{m(\alpha')-1}rdr& \text{ using \eqref{eq-induction}}\nonumber\\
     &\approx s^{\frac{2}{\alpha_n}} \int_{s^{\frac{1}{\alpha_{n+1}}}}^1 r^{1-2\frac{\alpha_{n+1}}{\alpha_n}}\left( \log\left( \frac{r^{\alpha_{n+1}}}{s}\right)\right)^{m(\alpha')-1}dr.\label{eq-maincomp}
 \end{align}
We now consider several cases:
\begin{enumerate}[wide]
    \item $m(\alpha)=m(\alpha')=1$. Then we have
    $\alpha_{n+1}>\alpha_{n}$ and if $n\geq 2$, $\alpha_n>\alpha_{n-1}.$ Notice that we have then
    $1-2\frac{\alpha_{n+1}}{\alpha_n}<-1,$
    so 
    \begin{align*}
        \abs{U_\alpha(s)\setminus V }&\approx s^{\frac{2}{\alpha_n}} \int_{s^{\frac{1}{\alpha_{n+1}}}}^1\,r^{1-2\frac{\alpha_{n+1}}{\alpha_n}}dr\\
        &=\frac{\alpha_n}{2(\alpha_{n+1}-\alpha_n)}\left(s^{\frac{2}{\alpha_{n+1}}}-s^{\frac{2}{\alpha_n}}\right),& \text{ by a direct evaluation}\\
        &\approx s^{\frac{2}{\alpha_{n+1}}}= s^{\frac{2}{\norm{\alpha}_\infty}}.
    \end{align*}
    Combining this with \eqref{eq-v1}, the result follows in this case.

\item $m(\alpha)=1, m(\alpha')>1$.
Let $\beta\in \rl^{n+1}$ be the multi-index such that $\beta_{n+1}=\alpha_{n+1}$ and $\beta_j=0$ if $1\leq j \leq n$, so that we have $\beta_j \leq \alpha_j$ for $1\leq j \leq n+1$.
Also let $\gamma=(\gamma',\gamma_{n+1})\in \rl^{n+1}$ be a tuple 
of nonnegative numbers, with $\gamma'\in \rl^n$ and $\gamma_{n+1}\in \rl$ such that $\gamma_{n+1}=\alpha_{n+1}$, $m(\gamma)=m(\gamma')=1$ and
for each $1\leq j\leq n$ we have $\alpha_{n+1}>\gamma_j> \alpha_j$,
so that for each $1\leq j \leq n+1$, we have $\gamma_j \geq \alpha_j.$ It is easy to see that such a $\gamma$ exists. For $z\in \D^n$, then we have $\rho_\gamma(z) \leq \rho_\alpha(z) \leq \rho_\beta(z)$, so that for $0<s\leq 1$ we have the inclusions:
\[ U_\beta(s)\subseteq U_\alpha(s) \subseteq U_\gamma(s).\]

Since $U_\beta(s) = \{z\in \D^n: \abs{z}\leq s^{\frac{1}{\alpha_{n+1}}}\}$, we see that $ \abs{U_\beta(s)}= \pi^n s^{\frac{2}{\alpha_{n+1}}}.$ Since $m(\gamma)=m(\gamma')=1$ and $\norm{\gamma}_\infty= \alpha_{n+1}$, it follows by the previous case that
$ \abs{U_\gamma(s)}\approx s^{\frac{2}{{\alpha_{n+1}}}}.$ Therefore, $\abs{U_\beta(s)}\approx s^{\frac{2}{{\alpha_{n+1}}}}
=s^{\frac{2}{\norm{\alpha}_\infty}}\cdot\left(\log\left( \frac{1}{s}\right)\right)^{m(\alpha)-1}$, since $m(\alpha)=1$.

\item $m(\alpha)>1$. Then $\alpha_{n}=\alpha_{n+1}$ and $m(\alpha')=m(\alpha)-1$ so we have
\begin{align*}
    \abs{U_\alpha(s)\setminus V}&\approx \eqref{eq-maincomp}=s^\frac{2}{\alpha_n} \int_{s^{\frac{1}{\alpha_{n+1}}}}^1 \left(\log\left(\frac{r^{\alpha_{n+1}}}{s} \right) \right)^{m(\alpha)-2} \frac{dr}{r}\\
    &=\sigma^2\cdot \alpha_{n+1}^{m(\alpha)-2} \int_{\sigma}^1 \left(\log\left(\frac{r}{\sigma} \right)\right)^{m(\alpha)-2} \frac{dr}{r}& \text{ where } \sigma =s^{\frac{1}{\alpha_{n}}}= s^{\frac{1}{\alpha_{n+1}}}\\\
    &= \sigma^2\cdot \alpha_{n+1}^{m(\alpha)-2}\int_1^{\frac{1}{\sigma}}(\log t)^{m(\alpha)-2} \frac{dt}{t}&\text{ where } t= \frac{r}{\sigma}\\
    &\approx \sigma^2 \left(\log\left(\frac{1}{\sigma} \right) \right)^{m(\alpha)-1} & \text{ by direct evaluation using } u=\log t\\
    & \approx s^{\frac{2}{\alpha_{n+1}}}\left(-\log {s}  \right)^{m(\alpha)-1},
    \end{align*}
completing the induction and the proof of the lemma.
\end{enumerate}  
\end{proof}
The next lemma is crucial for us to establish H\"older type estimates in Proposition \ref{prop-key}.
\begin{lem}\label{lem-gamma}
    Let $0<\gamma<1$ and let $\mu\geq 0$ be an integer. Then there is a $C>0$ such that for each subset $I\subseteq\mathbb N\bigcup \{0\}$, there is a
    $j_*\in I$ satisfying:
    \[ \sum_{j\in I}j^\mu \gamma^j \leq C j_*^\mu \gamma^{j_*}.\]
\end{lem}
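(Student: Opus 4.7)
The plan is to exploit the unimodal shape of $g(j) := j^{\mu}\gamma^{j}$ on $\mathbb{N}\cup\{0\}$ together with its eventual geometric decay. Observe that
\[
\frac{g(j+1)}{g(j)} \;=\; \gamma\left(1+\tfrac{1}{j}\right)^{\mu}
\]
tends to $\gamma$ as $j \to \infty$, so there is a threshold $N_{0}$ depending only on $\mu$ and $\gamma$ for which $g(j+1)/g(j) \leq \gamma^{1/2} < 1$ whenever $j \geq N_{0}$. Combined with the elementary fact that the full series $C_{\mathrm{tot}} := \sum_{j=0}^{\infty} g(j)$ converges, this should allow me to dominate any partial sum $\sum_{j\in I} g(j)$ by a single well-chosen term.

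I would handle $\mu = 0$ first as a warm-up: there $g(j) = \gamma^{j}$, so taking $j_{*} = \min I$ immediately gives $\sum_{j\in I} g(j) \leq \sum_{j \geq j_{*}} \gamma^{j} = g(j_{*})/(1-\gamma)$. For $\mu \geq 1$ the function $g$ vanishes at $0$, and I would split into two cases. If $I$ meets the ``peak region'' $\{1,\dots,N_{0}\}$, I pick any $j_{*} \in I \cap \{1,\dots,N_{0}\}$; then $g(j_{*}) \geq c_{0} := \min_{1 \leq k \leq N_{0}} g(k) > 0$, a constant depending only on $\mu$ and $\gamma$, and the crude bound $\sum_{j\in I} g(j) \leq C_{\mathrm{tot}}$ rewrites as $\sum_{j\in I} g(j) \leq (C_{\mathrm{tot}}/c_{0})\,g(j_{*})$. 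Otherwise $I \subseteq \{0\} \cup \{N_{0}+1, N_{0}+2,\dots\}$; since $g(0)=0$ in this subcase, I may take $j_{*} = \min(I\setminus\{0\})$ (which exists unless the sum is trivially zero and the statement vacuous) and apply the tail estimate $g(j+1) \leq \gamma^{1/2}\, g(j)$ for $j \geq N_{0}$ to obtain
\[
\sum_{j\in I} g(j) \;=\; \sum_{j \in I,\ j\geq j_{*}} g(j) \;\leq\; \frac{g(j_{*})}{1-\gamma^{1/2}}.
\]

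I do not expect any single computation to be a genuine hurdle; the real care lies in the bookkeeping, in particular isolating the edge value $j=0$ (where $g$ vanishes for $\mu\geq 1$ but equals $1$ for $\mu=0$) so that it does not interfere with the tail bound. The final constant $C$ is then the maximum of the three constants $(1-\gamma)^{-1}$, $C_{\mathrm{tot}}/c_{0}$, and $(1-\gamma^{1/2})^{-1}$ produced by the three subcases above, and depends only on $\mu$ and $\gamma$ as required.
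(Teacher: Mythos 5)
Your proof is correct, and it takes a genuinely different route from the paper's, though both rest on the same two features of $g(j)=j^{\mu}\gamma^{j}$: unimodality and eventual geometric decay of the ratio $g(j+1)/g(j)\to\gamma$. The paper locates the peak index $j_{0}$ precisely, then uses the monotonicity of the ratio $g(j+1)/g(j)$ on each side of $j_{0}$ to show that $\sum_{j\in I,\,j\leq j_{0}+1}g(j)$ and $\sum_{j\in I,\,j\geq j_{0}+1}g(j)$ are each comparable to the single extremal term $g(j_{1})$ resp.\ $g(j_{2})$, and finally sets $j_{*}$ to be the better of $j_{1},j_{2}$. You instead fix a finite ``peak window'' $\{1,\dots,N_{0}\}$ beyond which the ratio is $\leq\gamma^{1/2}$, and condition on whether $I$ meets the window. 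In the window case you do not need any structural control on $g$ beyond the obvious positive lower bound $c_{0}=\min_{1\leq k\leq N_{0}}g(k)>0$, and you replace the paper's two-sided comparison by a single blunt bound $\sum_{I}g\leq C_{\mathrm{tot}}\leq (C_{\mathrm{tot}}/c_{0})\,g(j_{*})$; in the tail case you sum a geometric series against $g(j_{*})$ with $j_{*}=\min(I\setminus\{0\})$. What your approach buys is simplicity: there is no need to track the sign of $g(j+1)/g(j)-1$ or the monotone ratios inside the peak region, at the expense of a cruder constant. One small wording point: when $\mu\geq 1$ and $I=\{0\}$, the conclusion is not ``vacuous'' --- $I$ is nonempty so you still must exhibit $j_{*}\in I$ --- but it is trivially satisfied with $j_{*}=0$, since both sides equal $0$.
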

\begin{proof}
 Set $c_j=j^\mu\gamma^j$, and
 let $j_0$ denote the largest integer $j$ satisfying 
\[\frac{c_{j+1}}{c_j}=\left(\frac{j+1}{j}\right)^{\mu}\gamma\geq 1.\]
Then, $c_{j_0+1}=\max\{c_j:j\geq 0\}$. Furthermore,
$\frac{c_{j+1}}{c_j}\geq \frac{c_{j_0}}{c_{j_0-1}}>1$ for all $j\leq j_0-1$ and $\frac{c_{j+1}}{c_j}\leq \frac{c_{j_0+2}}{c_{j_0+1}}<1$ for all $j\geq j_0+1$. Set $j_{1}=\max\{j\in I:j\leq j_0+1\}$ and $j_{2}=\min\{j\in I:j\geq j_0+1\}$.
Then the inequalities for the ratio $c_{j+1}/c_j$ imply that
\begin{align*}
   \sum_{j\in I,j\leq j_0+1}j^\mu \gamma^j\approx c_{j_{1}} \quad \text{ and }
    \sum_{j\in I_k,j\geq j_0+1}j^\mu\gamma^j\approx c_{j_{2}}.
\end{align*}
Choose $j_*$ from $j_{1}$ and $j_{2}$ such that $c_{j_*}=\max\{c_{j_{1}},c_{j_{2}}\}$. Then the claimed estimate follows:
\[\sum_{j\in I}j^\mu \gamma^j\approx c_{j_{1}}+c_{j_{2}}\approx c_{j_*}.\] 
\end{proof}
\begin{prop}\label{prop-key}
      Let $t>1$ and $\alpha=(\alpha_1,\dots,\alpha_n)$ with $\alpha_j\geq 0$ for all $j$ and $\alpha\not=0$. Set
       \begin{equation}
          \label{eq-p}
          p= \frac{t \norm{\alpha}_\infty+2}{\norm{\alpha}_\infty+2}.
      \end{equation}
      
      \begin{enumerate}
    \item  There is a $C>0$ such that for each measurable subset $E\subset \D^n$ we have
    \begin{equation}
        \label{eq-conc1}
        \int_E \rho_\alpha \,dV\leq C\left(\int_{E}\rho_{t\alpha}\cdot \left(-\log{\rho_\alpha }\right)^{(p-1)(m(\alpha)-1)}dV\right)^{1/p}.
    \end{equation}
 \item For each $\epsilon>0$ there is a $C_\epsilon>0$ such that for each measurable subset $E\subset \D^n$ we have
 \begin{equation}
     \label{eq-conc2}
      \int_E\rho_{\alpha}dV\leq C_\epsilon\left(\int_{E}\rho_{t\alpha}dV\right)^{1/(p+\epsilon)}.
 \end{equation}
    \end{enumerate}
    \end{prop}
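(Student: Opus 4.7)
The plan is to handle both parts via a dyadic decomposition of $E$ with respect to the level sets of $\rho_\alpha$. For $k \geq 0$, set $E_k = E \cap \{z \in \D^n : 2^{-k-1} \leq \rho_\alpha(z) < 2^{-k}\}$, so that $E$ is the disjoint union $\bigsqcup_k E_k$; on each slice $\rho_\alpha \approx 2^{-k}$ and $-\log\rho_\alpha \approx k$, and Lemma~\ref{lem-ualpha} gives the sharp volume bound $|E_k| \leq C\,2^{-2k/\norm{\alpha}_\infty}\,k^{m(\alpha)-1}$. A direct computation from the definition \eqref{eq-p} of $p$ reveals the crucial identity $(t-p)/(p-1) = 2/\norm{\alpha}_\infty$, which says the critical H\"older exponent aligns exactly with the power singularity allowed by Lemma~\ref{lem-ualpha}.

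Part (2) is the easier case since the $\varepsilon$-margin obviates the need for localization. Factoring $\rho_\alpha = \rho_\alpha^{t/(p+\varepsilon)} \cdot \rho_\alpha^{1 - t/(p+\varepsilon)}$ and applying H\"older's inequality on all of $E$ with exponents $p+\varepsilon$ and $(p+\varepsilon)'$ yields
\[\int_E \rho_\alpha\,dV \leq \Bigl(\int_E \rho_{t\alpha}\,dV\Bigr)^{1/(p+\varepsilon)} \Bigl(\int_E \rho_\alpha^{(p+\varepsilon-t)/(p+\varepsilon-1)}\,dV\Bigr)^{1/(p+\varepsilon)'}.\]
Since $(p+\varepsilon-t)/(p+\varepsilon-1)$ strictly exceeds the critical value $(p-t)/(p-1) = -2/\norm{\alpha}_\infty$ for $\varepsilon > 0$, the second factor is bounded by a constant $C_\varepsilon$ depending only on $\varepsilon$, $t$, and $\alpha$, even when integrated over all of $\D^n$; this proves part (2).

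For part (1) the exponent is critical, and I would apply H\"older on each slice $E_k$ separately. Combining the per-slice H\"older with the bound on $|E_k|$ yields $\int_{E_k}\rho_\alpha\,dV \leq C\,b_k^{1/p}$, where $b_k := \int_{E_k}\rho_{t\alpha}(-\log\rho_\alpha)^{(p-1)(m(\alpha)-1)}\,dV$; the logarithmic weight $k^{(p-1)(m(\alpha)-1)}$ is precisely what is needed to absorb the critical log factor that appears when the volume bound from Lemma~\ref{lem-ualpha} is saturated. Summing in $k$ gives $\int_E\rho_\alpha\,dV \leq C\sum_k b_k^{1/p}$, and it remains to show $\sum_k b_k^{1/p} \leq C\,(\sum_k b_k)^{1/p}$.

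This last step is where the main technical difficulty lies, and it is exactly the role played by Lemma~\ref{lem-gamma}. The bound on $|E_k|$ gives $b_k^{1/p} \leq C\,k^{m(\alpha)-1}\eta^k$ with $\eta = 2^{-(1+2/\norm{\alpha}_\infty)} \in (0,1)$, so the sequence $\{b_k^{1/p}\}$ is dominated pointwise by a geometric-type sequence. Applying Lemma~\ref{lem-gamma} to $I := \{k : b_k > 0\}$ produces an index $k_*\in I$ controlling the sum by the single term $k_*^{m(\alpha)-1}\eta^{k_*}$. The subtle point is that to conclude, one needs $b_{k_*}^{1/p}$ itself to be comparable to this upper bound, equivalently $|E_{k_*}| \approx \gamma_{k_*}$, while Lemma~\ref{lem-gamma} selects $k_*$ from the index set alone. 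I would address this by stratifying $I$ according to the dyadic size of the saturation ratio $|E_k|/\gamma_k \in (0,C]$, applying Lemma~\ref{lem-gamma} on each stratum (where $|E_k|$ and $\gamma_k$ are proportional to a uniform factor $2^{-j}$), and summing the resulting stratum-wise single-term bounds as a convergent geometric series in $j$.
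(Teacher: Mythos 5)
Your part (2) is correct and is actually cleaner than the paper's route: the paper derives \eqref{eq-conc2} from the same dyadic/stratification machinery used for part (1), whereas your global H\"older with exponents $p+\varepsilon$ and $(p+\varepsilon)'$, together with the identity $(p-t)/(p-1)=-2/\norm{\alpha}_\infty$ and the elementary fact that $\int_{\D^n}\rho_\alpha^s\,dV<\infty$ exactly when $s>-2/\norm{\alpha}_\infty$, does the job in two lines.

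Part (1), however, has a genuine gap. After the per-slice H\"older you discard the second factor, using only $\int_{E_k}\rho_\alpha^{-2/\norm{\alpha}_\infty}\,dV\lesssim k^{m(\alpha)-1}$; but that factor is in fact $\approx k^{m(\alpha)-1}\cdot|E_k|/|U_k|$, and you throw away the saturation ratio $|E_k|/|U_k|\leq 1$. The resulting reduction to $\sum_k b_k^{1/p}\lesssim\bigl(\sum_k b_k\bigr)^{1/p}$ is then simply \emph{false} for the $b_k$'s that can occur. Concretely, take $n=1$, $\alpha=(1)$, $t=2$, so $p=4/3$, and choose $E$ with $|E_k|\approx 2^{-4(N-k)}|U_k|$ for $k=0,\dots,N$; then $b_k=\int_{E_k}|z|^2\,dV\approx 2^{-4N}$ is constant in $k$, giving $\sum_k b_k^{3/4}\approx(N+1)2^{-3N}$ while $\bigl(\sum_k b_k\bigr)^{3/4}\approx(N+1)^{3/4}2^{-3N}$: the ratio diverges. (The true quantity $\int_E|z|\,dV\approx 2^{-3N}$ is still $\lesssim\bigl(\int_E|z|^2\bigr)^{3/4}$, so the theorem is safe; it is your intermediate sum $\sum_k b_k^{1/p}$ that has grown past $B^{1/p}$.) Your stratification-by-saturation repair at the end moves in the right direction, but ``summing the stratum-wise single-term bounds as a convergent geometric series in $j$'' only gives an absolute constant bound: it does not relate the sum back to $B^{1/p}$, and indeed no argument can, because the targeted inequality fails.

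What is actually needed, and what the paper does, is to stratify \emph{before} applying any slicewise H\"older. Write $\int_E\rho_\alpha\,dV\approx\sum_j 2^{-j}a_j$ and the log-weighted integral $\approx\sum_j 2^{-j}a_j^{\,p}$ (where Lemma~\ref{lem-gamma} replaces the sum over each stratum $I_j$ by a single term $a_j=\eta^{k_j}k_j^{m(\alpha)-1}$); then the inequality $\sum_j 2^{-j}a_j\leq\bigl(\sum_j 2^{-j}a_j^{\,p}\bigr)^{1/p}$ is \emph{Jensen's inequality} for the probability weights $2^{-j}$, not a geometric-series estimate. Alternatively, if you insist on slicewise H\"older, you must retain the second factor $\approx(|E_k|/|U_k|)^{1/p'}\approx 2^{-j/p'}$: this supplies the missing decay so that the stratified sum becomes $\sum_j 2^{-j}(\cdots)$ rather than $\sum_j 2^{-j/p}(\cdots)$, and Jensen then closes. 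Either way, Jensen on the stratum measure is the indispensable step, and it is absent from your write-up.
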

\begin{proof} 
  
	For an integer $j\geq 0$, if we set $\displaystyle{U_j=\left\{z\in \D^n: \frac{1}{2^j}\leq \rho_\alpha(z) < \frac{1}{2^{j-1}} \right\}},$
    then from  Lemma~\ref{lem-ualpha}, we see that
    \begin{equation}
        \label{eq-voluj}
        \abs{U_j}\approx  2^{-\frac{2j}{\norm{\alpha}_\infty}}\cdot j^{m(\alpha)-1}.
    \end{equation}
    Let $\eta\geq 0$. On the set $U_j$, we clearly have
    \[ \rho_{t\alpha}(-\log  \rho_\alpha)^\eta \approx j^\eta 2^{-tj},\]
    where the implied constant is independent of $j$ but depends on $\alpha$ and $t$.
    If for $k\geq 1$, we set
    $\displaystyle{ I_k=\left\{ j \geq 1: \frac{1}{2^k}< \frac{\abs
{E\cap U_j}}{\abs{U_j}} \leq \frac{1}{2^{k-1}}\right\}}$, we have
 \begin{align}
      \int_E \rho_{t\alpha}\log(-\rho_\alpha)^\eta\,dV&= \sum_{j=1}^\infty \int_{E\cap U_j}\rho_{t\alpha}\log(-\rho_\alpha)^\eta \,dV
\approx \sum_{j=1}^\infty j^\eta 2^{-tj}\abs{E\cap U_j}\nonumber\\
&= \sum_{j=1}^\infty j^\eta 2^{-tj} \abs{U_j}\cdot \frac{\abs{E\cap U_j}}{\abs{U_j}}=\sum_{k=1}^\infty\sum_{j\in I_k} j^\eta 2^{-tj} \abs{U_j}\cdot \frac{\abs{E\cap U_j}}{\abs{U_j}}\nonumber\\
&= \sum_{k=1}^\infty\left( \sum_{j\in I_k}  j^\eta 2^{-tj}2^{-\frac{2j}{\norm{\alpha}_\infty}}\cdot j^{m(\alpha)-1} \right)\frac{1}{2^k}\nonumber\\
&= \sum_{k=1}^\infty\left( \sum_{j\in I_k}   j^{\eta+m(\alpha)-1}  2^{-j\left(t+\frac{2}{\norm{\alpha}_\infty}\right)}\right)\frac{1}{2^k}.\label{eq-comp1}
  \end{align}
Thanks to Lemma~\ref{lem-gamma},  for each $k$, there exists a $j_k\in I_k$ such that
\[\sum_{j\in I_k}j^{\eta+m(\alpha)-1}  2^{-j\left(t+\frac{2}{\norm{\alpha}_\infty}\right)}\approx j_k^{\eta+m(\alpha)-1}  2^{-j_k\left(t+\frac{2}{\norm{\alpha}_\infty}\right)},\]
where the implied constant depends only on $\alpha$ and $\eta$ (and not on $k$). Therefore
\begin{equation}
    \label{eq-rhotalpha}
     \int_E \rho_{t\alpha}\log(-\rho_\alpha)^\eta\, dV \approx  \eqref{eq-comp1} \approx \sum_{k=1}^\infty\left(  j_k^{\eta+m(\alpha)-1}  2^{-j_k\left(t+\frac{2}{\norm{\alpha}_\infty}\right)}\right)\frac{1}{2^k}.
\end{equation}
Taking $t=1$ and $\eta=0$ in \eqref{eq-rhotalpha} we see that
\begin{align}
    \int_E \rho_\alpha dV &\approx  \sum_{k=1}^\infty\left(  j_k^{m(\alpha)-1}  2^{-j_k\left(1+\frac{2}{\norm{\alpha}_\infty}\right)}\right)\frac{1}{2^k}\label{eq-approx}\\
    &\leq \left(\sum_{k=1}^\infty\left(  j_k^{m(\alpha)-1}  2^{-j_k\left(1+\frac{2}{\norm{\alpha}_\infty}\right)}\right)^p\frac{1}{2^k} \right)^\frac{1}{p} \label{eq-jensen}\\
    &= \left(\sum_{k=1}^\infty  j_k^{p(m(\alpha)-1)}  2^{-pj_k\left(1+\frac{2}{\norm{\alpha}_\infty}\right)}\frac{1}{2^k} \right)^\frac{1}{p}.\label{eq-comp2}
\end{align}
Here in \eqref{eq-jensen}, we have use Jensen's inequality $\phi\left(\int f d\mu\right) \leq \int\phi\circ f\, d\mu$, where $\phi$ is the convex function $\phi(t)=t^p$ with $p$ as in \eqref{eq-p}, the measure $\mu$
on the set of positive integers is such that $\mu(k)=\frac{1}{2^k}$ and $f(k)=j_k^{m(\alpha)-1}  2^{-j_k\left(1+\frac{2}{\norm{\alpha}_\infty}\right)}. $ Taking $\eta=(p-1)(m(\alpha)-1)$
in \eqref{eq-rhotalpha}, and noticing that \eqref{eq-p} is equivalent to $\displaystyle{t+\frac{2}{\norm{\alpha}_\infty}}=p\left(1 +\frac{2}{\norm{\alpha}_\infty} \right)$,
we see that
\[ \int_{E}\rho_{t\alpha}\cdot \left(-\log{\rho_\alpha }\right)^{(p-1)(m(\alpha)-1)}dV \approx \sum_{k=1}^\infty  j_k^{p(m(\alpha)-1)}  2^{-pj_k\left(1+\frac{2}{\norm{\alpha}_\infty}\right)}\frac{1}{2^k}.\]
Comparing the right hand side with \eqref{eq-comp2} the inequality \eqref{eq-conc1} follows.

To see \eqref{eq-conc2}, let $\delta>0$ be defined by the relation $\displaystyle{(p+\epsilon)\left(1+\frac{2}{\norm{\alpha}_\infty+\delta}\right)=t+\frac{2}{\norm{\alpha}_\infty}}$.
Since we have $$j^{m(\alpha)-1} 2^{-\frac{j}{\norm{\alpha}_\infty}} \lesssim 2^{-\frac{j}{\norm{\alpha}_\infty+\delta}}$$
(with the implied constant depending on $\epsilon$ and $\alpha$ but not $j$), from \eqref{eq-approx} we see that
\begin{align}
      \int_E \rho_\alpha dV &\lesssim \sum_{k=1}^\infty 2^{-j_k\left(1+\frac{2}{\norm{\alpha}_\infty +\delta} \right)}\cdot \frac{1}{2^k}\nonumber\\
      & \leq \left(\sum_{k=1}^\infty 
      \left(2^{-j_k\left(1+\frac{2}{\norm{\alpha}_\infty +\delta} \right)}\right)^{(p+\epsilon)}\cdot \frac{1}{2^k}\right)^\frac{1}{p+\epsilon}\label{eq-jensen2}\\
      & =   \left(\sum_{k=1}^\infty 
      \left(2^{-j_k\left(t+\frac{2}{\norm{\alpha}_\infty}\right)}\right)\cdot \frac{1}{2^k}\right)^\frac{1}{p+\epsilon}\nonumber\\
      & \leq \left(\sum_{k=1}^\infty\left(  j_k^{m(\alpha)-1}  2^{-j_k\left(t+\frac{2}{\norm{\alpha}_\infty}\right)}\right)\frac{1}{2^k}\right)^\frac{1}{p+\epsilon}\nonumber\\
      & \approx \left(\int_E\rho_{t\alpha} dV\right)^\frac{1}{p+\epsilon} &\text{ using \eqref{eq-rhotalpha}},
\end{align}
where again in \eqref{eq-jensen2} we have used the Jensen inequality, this time with the convex function $t\mapsto t^{p+\epsilon}.$ This completes the proof of \eqref{eq-conc2}.
\end{proof}
\subsection{A special case of the main estimate}
Denote by $\D_{\frac{1}{2}}^n$ the polydisc of 
radius $\frac{1}{2}$:
\[\D_{\frac{1}{2}}^n=\left\{z\in\cx^n: \abs{z_j}<\frac{1}{2} \text{ for } 1\leq j \leq n\right\}. \]
\begin{prop}
    \label{prop-specialcase}
    Let $0\leq \ell \leq n$, and let $G$ be a measurable subset of the product domain
    \begin{equation}
        \label{eq-fl}
       \D^\ell_{\frac{1}{2}} \times \left(\D^{n-\ell}\setminus \D^{n-\ell}_{\frac{1}{2}}\right)=\left\{z\in \D^n:\abs{z_j}< \frac{1}{2} \text{ for } j\leq \ell, \text{ and }\abs{z_j}\geq \frac{1}{2} \text{ for } j \geq \ell+1 \right\}. 
    \end{equation}
    If $\beta=(\beta_1,\dots, \beta_n)$ is a nonzero 
    tuple of nonnegative numbers, then we have
    \begin{equation}
        \label{eq-specialcase1}
          \norm{P^+_{\mathbb{D}^n}(\rho_\beta\cdot1_{G})}^{p_*}_{L^{p_*}(\mathbb D^n)}\lesssim \int_{G}\rho_{2\beta}\cdot(-\log \rho_{\beta})^{(p_*-1)(m(\beta)-1)}\,dV,
    \end{equation}
 where $\displaystyle{ p_*=  \frac{2 \norm{\beta}_\infty+2}{\norm{\beta}_\infty+2}.}$ Further, if $p\in (p_*,2]$ we have

 \begin{equation}
     \label{eq-specialcase2}
 \norm{P^+_{\mathbb{D}^n}(\rho_\beta\cdot1_{G})}^{p}_{L^{p}(\mathbb D^n)}\lesssim \int_{G}\rho_{2\beta}\,dV.  \end{equation}
\end{prop}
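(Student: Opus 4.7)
The plan is to exploit the product structure of the containing domain by first isolating the ``small'' coordinate directions (where $\abs{w_j}<1/2$, for $j\leq\ell$), on which the Bergman kernel is uniformly bounded, and then combining a Minkowski-type pointwise estimate on the remaining ``boundary'' directions with Proposition~\ref{prop-key}.

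First I would write $z = (z',z'')$, $w = (w',w'')$ with $z',w'\in \cx^\ell$ and $z'',w''\in\cx^{n-\ell}$, and split $\beta=(\beta',\beta'')$ accordingly. For any $w' \in \D_{1/2}^\ell$ and any $z'\in\D^\ell$, the bound $\abs{1-z_j\bar w_j}\geq 1-\abs{z_j}\abs{w_j} > 1/2$ makes the first $\ell$ kernel factors uniformly bounded. Peeling these off yields
\[
P^+_{\D^n}(\rho_\beta \cdot 1_G)(z) \leq C \int_A \rho_{\beta''}(w'')\,\Phi(w'') \prod_{j>\ell} \frac{dV(w_j)}{\pi\abs{1-z_j\bar w_j}^2},
\]
where $A = \D^{n-\ell}\setminus \D_{1/2}^{n-\ell}$ and $\Phi(w'') = \int_{G(w'')}\rho_{\beta'}(w')\,dV(w')$, with $G(w'')$ the $w''$-slice of $G$. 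Since the right-hand side is independent of $z'$, integrating in $z'$ produces
\[
\norm{P^+_{\D^n}(\rho_\beta 1_G)}^{p_*}_{L^{p_*}(\D^n)} \lesssim \norm{P^+_{\D^{n-\ell}}(\rho_{\beta''}\Phi\cdot 1_A)}^{p_*}_{L^{p_*}(\D^{n-\ell})},
\]
reducing the problem to one on the lower-dimensional polydisc with input supported on $A$, on which $\rho_{\beta''}$ is comparable to $1$.

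Next I would estimate this reduced expression. The Minkowski integral inequality gives a first bound involving the singular factor $\prod_{j>\ell}(1-\abs{w_j}^2)^{(2-2p_*)/p_*}$ that arises from computing the $L^{p_*}$ norm of the polydisc Bergman kernel in $z''$; by itself this is too crude near the Shilov boundary. I would refine it via a dyadic decomposition of the integration domain based on the level sets $\{2^{-k}\leq \rho_\beta < 2^{-k+1}\}$, using the sharp volume estimate of Lemma~\ref{lem-ualpha}, and then apply Proposition~\ref{prop-key}(1) (with $\alpha=\beta$, $t=2$, $p=p_*$) to convert the resulting integral of $\rho_\beta$ into the weighted form $\int_G \rho_{2\beta}(-\log\rho_\beta)^{(p_*-1)(m(\beta)-1)}\,dV$ required by \eqref{eq-specialcase1}. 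The inequality \eqref{eq-specialcase2} for $p\in(p_*,2]$ is proved by an entirely analogous argument using Proposition~\ref{prop-key}(2) with $t=2$ and $\varepsilon = p-p_*>0$, which removes the logarithmic weight at the price of enlarging the exponent.

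The most delicate step is this passage from the Minkowski-type estimate to the sharp log-weighted bound: the factor $(-\log\rho_\beta)^{(p_*-1)(m(\beta)-1)}$ is exactly the exponent that Lemma~\ref{lem-ualpha} forces into the measure of $\{\rho_\beta<s\}$, and it is precisely this matching that lets Proposition~\ref{prop-key} close the chain of inequalities. Keeping track of the correct power of $-\log\rho_\beta$ through the dyadic decomposition---and arranging matters so that the exponents line up with those produced by the Jensen-inequality step in the proof of Proposition~\ref{prop-key}---is where the main technical effort will lie.
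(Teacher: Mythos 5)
Your opening reduction---splitting $z=(z',z'')$, bounding $\abs{1-z_j\bar w_j}>\tfrac12$ for $j\le\ell$, using $\rho_{\beta''}\approx 1$ on the slab $A$, and arriving at a lower-dimensional positive Bergman projection of $\Phi(w'')=\int_{G(w'')}\rho_{\beta'}\,dV$---matches the paper exactly. After that, however, your plan diverges in a way that creates two genuine gaps.

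First, the Minkowski step discards structure you cannot afford to lose. After Minkowski and the kernel $L^{p_*}$-norm computation you are left with the \emph{linear} expression
\[
\int_A \Phi(w'')\prod_{j>\ell}(1-\abs{w_j}^2)^{(2-2p_*)/p_*}\,dV(w'')\,,
\]
not an $L^{p_*}$-norm of $\Phi$. Trying to reinstate the needed $p_*$th power via H\"older with conjugate exponents forces you to integrate $(1-\abs{w_j})^{-2}$ over $\D$, which diverges; meanwhile, a dyadic decomposition on level sets of $\rho_\beta$ localizes where $\abs{w_j}$ is \emph{small}, and does nothing to tame the singularity as $\abs{w_j}\to 1$, so that refinement does not address the obstruction you yourself flagged. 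The paper sidesteps all of this with the standard fact that $P^+_{\D^{n-\ell}}$ is bounded on $L^{p}(\D^{n-\ell})$ for $1<p<\infty$: that one line converts your expression directly into $\int_{\D^{n-\ell}}\Phi(w'')^{p_*}\,dV(w'')$, which is where Proposition~\ref{prop-key} can actually bite, pointwise in $w''$.

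Second, you propose applying Proposition~\ref{prop-key} with $\alpha=\beta$ (the full $n$-tuple), but the integral you must convert, $\int_{G(w'')}\rho_{\beta'}\,dV$, lives on $\D^\ell$ and involves only the $\ell$-subtuple $\beta'$. Applying Proposition~\ref{prop-key} to $\beta'$ with $t=2$ outputs the exponent $p'=\frac{2\norm{\beta'}_\infty+2}{\norm{\beta'}_\infty+2}$, which equals $p_*$ only when $\norm{\beta'}_\infty=\norm{\beta}_\infty$. When $\norm{\beta'}_\infty<\norm{\beta}_\infty$ you have $p'<p_*$, and you must instead invoke part~(2) of Proposition~\ref{prop-key} with $\varepsilon=p_*-p'>0$ to land at the $p_*$th power without the log weight, afterwards re-inserting the harmless $(-\log\rho_\beta)^{(p_*-1)(m(\beta)-1)}\gtrsim 1$ factor. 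This dichotomy is the substance of the two cases in the paper's proof and cannot be skipped; without it the exponents simply do not line up.

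Incorporating these two changes---replace Minkowski with the $L^p$-boundedness of $P^+_{\D^{n-\ell}}$, and run a case split on whether the max of $\beta$ is attained among the first $\ell$ coordinates---turns your outline into the paper's argument.
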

\begin{proof} 
Write the coordinates of $\cx^n$ as $(z',z'')$ where $z'\in \cx^\ell$ and $z''\in \cx^{n-\ell},$ and write the tuple $\beta$ as $(\beta', \beta'')$ 
where $\beta'\in \rl^\ell$ and $\beta''\in \rl^{n-\ell}.$

For $w\in G$ and $z\in \D^n$, we have $\abs{w_j}<\frac{1}{2}$ and $\abs{z_j}<1$, so $\abs{1-z_j \ol{w_j}}> \frac{1}{2}$ whenever $j \leq \ell$. Therefore 
\[\abs{K_{\D^\ell}(z',w')}= \frac{1}{\pi^\ell} \cdot \frac{1}{\prod_{j=1}^\ell \abs{1- z_j \ol{w_j}}^2} \leq \left(\frac{4}{\pi} \right)^\ell,\]
so that \begin{equation}
    \label{eq-inp1}
    \abs{K_{\D^n}(z,w)}= \abs{K_{\D^\ell}(z',w')}\cdot \abs{K_{\D^{n-\ell}}(z'',w'')} \lesssim \abs{K_{\D^{n-\ell}}(z'',w'')}.
\end{equation}
Moreover, for $w\in G$ we have $\frac{1}{2}\leq \abs{w_j}<1$ if $j\geq \ell+1,$ so
\begin{equation}
    \label{eq-inp2}
    \rho_\beta(w)=\rho_{\beta'}(w')\rho_{\beta''}(w'')\approx \rho_{\beta'}(w').
\end{equation}
Therefore for $z=(z',z'')\in \D^n$
\begin{align}
    P^+_{\D^n}(\rho_\beta 1_G)(z',z'')&= \int_{\D^n}\abs{K_{\D^n}(z,w)}\rho_\beta(w)1_G(w)dV(w)\nonumber\\
    &\lesssim \int_{\D^n} \abs{K_{\D^{n-\ell}}(z'',w'')} \rho_{\beta'}(w')1_{G}(w',w'')dV \quad \text{ using \eqref{eq-inp1} and \eqref{eq-inp2}}\nonumber\\
    &= \int_{\D^{n-\ell}}\abs{K_{\D^{n-\ell}}(z'',w'')}\left(\int_{\D^\ell}\rho_{\beta}(w')1_G(w',w'')dV(w') \right) dV(w'')\nonumber\\
    &=P_{\D^{n-\ell}}^+(h)(z''), \label{eq-comp7}
 \end{align}
where $h$ is the function on $\D^{n-\ell}$ given by
\begin{equation}\label{eq-h}
    h(w'')=\int_{\D^\ell}\rho_{\beta}(w')1_G(w',w'')dV(w')
    =\int_{G(w'')}\rho_{\beta'}dV,
\end{equation}
with \[G(w'')= \{w'\in \D: (w',w'')\in G\}.\]
For $p>1$ we have
    \begin{align}
        \norm{P_{\D^n}^+(\rho_\beta 1_G)}_{L^{{p}}(\D^n)}^{{p}} & \lesssim \norm{P^+_{\D^{n-\ell}}(h)}^{{p}}_{L^{{p}}(\D^{n-\ell})} \quad \text{ using \eqref{eq-comp7}}\nonumber\\
        & \lesssim \norm{h}^{{p}}_{L^{{p}}(\D^{n-\ell})}\nonumber \quad \text{ since   $P^+_{\D^{n-\ell}}$ is bounded in $L^{{p}}(\D^{n-\ell})$ }\nonumber\\
        &= \int_{\D^{n-\ell}} (h(w''))^{{p}}dV(w'')\nonumber\\
        &= \int_{\D^{n-\ell}} \left( \int_{G(w'')}\rho_{\beta'}(w')dV(w')\right)^{{p}}dV(w'').\label{eq-comp8}
        \end{align}
To prove \eqref{eq-specialcase1}, we consider two cases, depending on the tuple $\beta$:
\begin{enumerate}[wide]
    \item Case 1: The $\ell$-tuple $\beta'=(\beta_1,\dots, \beta_\ell)$ has an element equal to $\norm{\beta}_\infty= \max\{\beta_k: 1\leq k \leq n\},$ i.e. $\norm{\beta'}_\infty= \norm{\beta}_\infty$, where $\norm{\beta'}_\infty=\max\{\beta_k, 1 \leq k \leq \ell\}$.
    
    In this case,
    \[ \frac{2\norm{\beta'}_\infty+2}{\norm{\beta'}_\infty+2}=p_*= \frac{2\norm{\beta}_\infty+2}{\norm{\beta}_\infty+2},\]
    so by conclusion~\eqref{eq-conc1} of Proposition~\ref{prop-key} (with $t=2$), we have for each $w''\in \D^{n-\ell}:$
    \begin{align}
        (h(w''))^{p_*}= \left( \int_{G(w'')}\rho_{\beta'}dV\right)^{p_*}
        & \lesssim \int_{G(w'')} \rho_{2\beta'}\cdot (-\log \rho_{\beta'})^{(p_*-1)(m(\beta')-1)}dV.      \label{eq-comp5}
    \end{align}
    Then taking $p=p_*$ in \eqref{eq-comp8} and using \eqref{eq-comp5} we obtain
   \begin{align}  \norm{P_{\D^n}^+(\rho_\beta 1_G)}_{L^{p_*}(\D^n)}^{p_*}& \lesssim \eqref{eq-comp8}\nonumber\\
       &\lesssim \int_{\D^{n-\ell}}\int_{G(w'')} \rho_{2\beta'}(w')\cdot (-\log \rho_{\beta'}(w'))^{(p_*-1)(m(\beta')-1)}dV(w') dV(w'') \nonumber\\
        &=\int_{G}\rho_{2\beta'}(w')\cdot (-\log \rho_{\beta'}(w'))^{(p_*-1)(m(\beta')-1)}dV(w).\label{eq-comp6}
    \end{align}
    The integrand in \eqref{eq-comp6} is dominated by the function
    \begin{align*}
       w=(w',w'')&\mapsto \rho_{2\beta'}(w')\rho_{2\beta''}(w'')\cdot (-\log \rho_{\beta'}(w')-\log\rho_{\beta''}(w''))^{(p_*-1)(m(\beta)-1)}\\
       &=\left(\rho_{2\beta}\cdot(-\log \rho_{\beta})^{(p_*-1)(m(\beta)-1)}\right)(w),
    \end{align*}
    since $\rho_{2\beta''}\approx 1$, $-\log\rho_{\beta''}>0$ and $m(\beta)\geq m(\beta').$ This completes the proof of \eqref{eq-specialcase1} in this case. 
    \item Case 2: no element of the $\ell$-tuple $\beta'=(\beta_1,\dots, \beta_\ell)$ is equal to $\norm{\beta}_\infty$. Then $\norm{\beta'}_\infty< \norm{\beta}_\infty$, and if we set 
    $\displaystyle{p'= \frac{2\norm{\beta'}_\infty+2}{\norm{\beta'}_\infty+2}}$, then we have $p'<p_*$.

Now, using the conclusion \eqref{eq-conc2} of Proposition~\ref{prop-key} (with $t=2$ and $p'+\epsilon=p_*$) we conclude that for each $w''\in \D^{n-\ell}$
\begin{equation}
     (h(w''))^{p_*}= \left( \int_{G(w'')}\rho_{\beta'}dV\right)^{p_*}
         \lesssim \int_{G(w'')} \rho_{2\beta'}\,dV.      \label{eq-comp9}
\end{equation}
Therefore, again taking $p=p_*$ in \eqref{eq-comp8} gives
 \begin{align}  \norm{P_{\D^n}^+(\rho_\beta 1_G)}_{L^{p_*}(\D^n)}^{p_*} \lesssim \eqref{eq-comp8}&\lesssim \int_{\D^{n-\ell}}\int_{G(w'')} \rho_{2\beta'}(w')dV(w') dV(w'') \nonumber\\
 & = \int_{G} \rho_{2\beta'}(w')dV(w)\nonumber\\
 & \lesssim \int_{G} \rho_{2\beta}\cdot(-\log \rho_{\beta})^{(p_*-1)(m(\beta)-1)}dV.\nonumber
   \end{align}    
    Here we use the facts that $\rho_{2\beta''}\approx 1$ and that
    \[ -\log \rho_{\beta}(w)=-\log \rho_{\beta'}(w')- \log \rho_{\beta''}(w'')\gtrsim 1,\]
    since  $- \log \rho_{\beta''}(w'') \gtrsim 1$ and
    $-\log \rho_{\beta'}(w')>0$ if $w=(w',w'')\in G.$
    This completes the second case, and also the proof
    of \eqref{eq-specialcase1}.
\end{enumerate}

The proof of \eqref{eq-specialcase2} is similar to that of case (2) above. Since $p>p_*$, by  conclusion \eqref{eq-conc2} of Proposition~\ref{prop-key} (with $t=2$) we conclude that for each $w''\in \D^{n-\ell}$
\begin{equation}
     (h(w''))^{p}= \left( \int_{G(w'')}\rho_{\beta'}dV\right)^{p}
         \lesssim \int_{G(w'')} \rho_{2\beta'}\,dV.      \label{eq-comp10}
\end{equation}
Therefore, by \eqref{eq-comp8}
 \begin{align}  \norm{P_{\D^n}^+(\rho_\beta 1_G)}_{L^{p}(\D^n)}^{p} \lesssim \eqref{eq-comp8}&\lesssim \int_{\D^{n-\ell}}\int_{G(w'')} \rho_{2\beta'}(w')dV(w') dV(w'') \nonumber\\
 & = \int_{G} \rho_{2\beta'}(w')dV(w)\nonumber \lesssim \int_{G} \rho_{2\beta}dV, \nonumber
   \end{align}    
    where we use the fact that $\rho_{2\beta''}\approx 1$ on $G$. This completes the proof of \eqref{eq-specialcase2}.
\end{proof}
\subsection{Proof of Theorem~\ref{thm-polydisc}}
    Let $\mathcal S$ denote the power set of the set $\{1,2,\dots,n\}$. For a subset $\mathcal J=\{j_1,\dots,j_k\}\in\mathcal S$, define a projection
    map $\pi_{\mathcal{J}}:\cx^n\to \cx^{\abs{\mathcal{J}}}$ by setting $\pi_{\mathcal J}(z)=(z_{j_1},\dots,z_{j_k})$ for $z\in \cx^n$.
    Denoting by $\J^c$ the complement $\{1,2,\dots, n\}\setminus \J$, we abuse notation slightly by 
    writing $z=(\pi_\J(z), \pi_{\J^c}(z)).$
    Notice also $\pi_\J(\D^n)\cong \D^{\#\J}.$ 
    
    Define the operator $P^+_{\mathcal J}$  as follows.
    For a function $f$ on $\D^n,$ we set for $z\in \D^n$:
 \[P^+_{\mathcal J}(f)(z)=\int\limits_{\pi_{\mathcal J}(\mathbb {D}^n)}\frac{f\left(\pi_{\J}(w),\pi_{\J^c}(z)\right)dV(\pi_{\mathcal J}(w))}{\prod\limits_{j\in\mathcal J}\pi\abs{1-z_j\bar w_j}^2},\]
where, by slight abuse of notation, we mean by $\left(\pi_{\J}(w),\pi_{\J^c}(z)\right)$ the $n$-tuple in which the $j$-th element is $w_j$ if $j\in \J$ and is 
$z_j$ if $j\not\in \J$. Therefore, we can think of 
$P^+_{\mathcal J}$ as the positive Bergman operator 
acting in the variables $z_j, j\in \J$, or more precisely  as a tensor product of the positive Bergman operator on a polydisc of dimension $\#\J$ with 
the identity operator on another polydisc of dimension 
$\#\J^c=n-\#\J.$

For $\mathcal J\in \mathcal S$, we set
    \[F_{\mathcal J}=\left\{w\in F:|w_j|<\frac{1}{2} \text{ for  }j\in \mathcal J \text{ and }\frac{1}{2}\leq |w_k|<1 \text{ for  }k\in \mathcal J^{c}\right\}.\]
The collection $\left\{F_{\mathcal J}, \J\in \mathcal{S}\right\}$  consists of pairwise disjoint sets and $F=\bigcup\limits_{\mathcal J\in\mathcal S}F_{\mathcal J}$. Therefore 
 $1_F=\sum\limits_{\mathcal J\in\mathcal S}1_{F_{\mathcal J}}$ and consequently 
 \[ \norm{P^+_{\mathbb{D}^n}(\rho_\alpha\cdot1_F)}_{L^{p_*}(\mathbb D^n)}\leq\sum_{\mathcal J\in\mathcal S}\norm{P^+_{\mathbb D^n}(\rho_\alpha\cdot1_{F_{\mathcal J}})}_{L^{p_*}(\mathbb D^n)} \lesssim \left( \sum_{\mathcal J\in\mathcal S}\norm{P^+_{\mathbb D^n}(\rho_\alpha\cdot1_{F_{\mathcal J}})}_{L^{p_*}}^{p_*}\right)^{\frac{1}{p_*}}.\]

 Therefore, to complete the proof of \eqref{eq-poyest1},
 it is enough to show  that 
 for each $\mathcal J\in \mathcal S$, we have
 \begin{equation}\label{eq-FJ}
     \norm{P^+_{\mathbb{D}^n}(\rho_\alpha\cdot1_{F_{\mathcal J}})}^{p_*}_{L^{p_*}(\mathbb D^n)}\lesssim \int_{F_{\mathcal J}}\rho_{2\alpha}(-\log \rho_{\alpha})^{(p_*-1)(m(\alpha)-1)}\,dV.
 \end{equation}
Similarly, to complete the proof of \eqref{eq-polyest2},
we need to show that for each $p\in (p_*,2]$ we have
\begin{equation}
    \label{eq-FJ2}\norm{P^+_{\mathbb D^n}(\rho_\alpha\cdot1_{F_{\mathcal J}})}^{p}_{L^p(\mathbb D^n)}\lesssim \int_{F_{\mathcal J}}\rho_{2\alpha}\,dV.
\end{equation}

 Let $\ell= \#\J$ be the cardinality of $\J$,  and let $\sigma:\{1,2,\dots,n\}\to \{1,2,\dots,n\}$ be a permutation such that $\sigma(\J)=\{1,2,\dots,\ell\}$.
 Then  $\sigma(\J^c)=\{\ell+1,\dots, n\}$,  and define
 a unitary automorphism $S:\cx^n\to \cx^n$ by setting
 $S(z_1,\dots, z_n)=(z_{\sigma(1)},\dots, z_{\sigma(n)}).$ Making a change of variables by 
 the map $S$ on both sides of \eqref{eq-FJ}, we have 
 the equivalent inequality:
 \begin{equation}
     \label{eq-G}
     \norm{P^+_{\mathbb{D}^n}(\rho_\beta\cdot1_{G})}^{p_*}_{L^{p_*}(\mathbb D^n)}\lesssim \int_{G}\rho_{2\beta}(-\log \rho_{\beta})^{(p_*-1)(m(\alpha)-1)}\,dV,
 \end{equation}
 where $\beta=(\beta_1,\dots, \beta_n)$ with $\beta_j=\alpha_{\sigma^{-1}(j)}$ is a permutation of 
 the tuple $\alpha$, the set $G=F_\J\circ S^{-1}$ is
 measurable and contained in the set $ \D^\ell_{\frac{1}{2}} \times \left(\D^{n-\ell}\setminus \D^{n-\ell}_{\frac{1}{2}}\right)$ of \eqref{eq-fl}.
 Note  that $m(\alpha)=m(\beta),$ $\norm{\alpha}_\infty=\norm{\beta}_\infty$ and  consequently, 
$\displaystyle{ p_*=  \frac{2 \norm{\beta}_\infty+2}{\norm{\beta}_\infty+2}}$. The estimate \eqref{eq-G} now follows from conclusion~\eqref{eq-specialcase1} of
Proposition~\ref{prop-specialcase}.

The same change of variables transforms \eqref{eq-FJ2} into the equivalent inequality:
\begin{equation}
 \label{eq-G2}
     \norm{P^+_{\mathbb{D}^n}(\rho_\beta\cdot1_{G})}^{p}_{L^{p}(\mathbb D^n)}\lesssim \int_{G}\rho_{2\beta}\,dV,   
\end{equation}
with the same notation as above. This now follows from 
conclusion \eqref{eq-specialcase2} of Proposition~\ref{prop-specialcase}.

\section{Proofs of Theorem~\ref{thm-genest} and Proposition~\ref{prop-restricted}}\label{sec-pullback}

\subsection{Transformation of restricted type under proper maps}
In order to deduce Theorem~\ref{thm-genest} from the main estimate
Theorem~\ref{thm-polydisc}, we will now consider how restricted  type estimates transform under proper holomorphic maps of domains,
provided the map is of ``quotient type" in the language 
of \cite{CJM}
or a ``ramified normal cover" in the language of \cite{dallara}.  Let  $\phi:\Omega\to \Uu$ be a proper 
holomorphic map  
of  bounded domains in $\cx^n$ which is a 
ramified normal covering, with the finite group of 
deck-transformations $\Gamma$.  Recall that this means
that  each $g\in \Gamma$ is a biholomorphic automorphism of $\Omega$ such that $\phi\circ g =\phi,$ and the restriction $\phi: \Omega\setminus\{\det \phi'=0\}\to \Uu\setminus \phi(\{\det \phi'=0\})$ to the regular points is a  normal (or regular, or Galois) covering, so that the action on $\Gamma$ on each fiber 
$\phi^{-1}(z)$ is transitive. Then $\Uu$ is naturally 
identified with the quotient $\Omega/\Gamma$ in the category of analytic spaces. Without loss of generality, after applying a linear change of coordinates, we can assume that  $\Gamma\subset U(n)$, where $U(n)$
is the  group of unitary linear automorphisms of $\cx^n$.

Let $V$ be a fundamental domain for the action of $\Gamma$ on $\Omega$, i.e., $V$ is an open set that is mapped by $\phi$ biholomorphically into $\Uu$, and $\Uu\setminus\phi(V)$ is a closed set of measure zero. 
Let $\phi_e^{-1}$ be the inverse of $\phi|_V$. Then 
$\{gV:g\in \Gamma\}$ is a partition of $\Omega$ (up to a small set) into fundamental domains, and $\{g\circ\phi_e^{-1}, g\in \Gamma\}$ are the branches of the inverse of $\phi$.

Let $u$ be the nonnegative function on $\Omega$ given by
\begin{equation}
    \label{eq-udef2}
    u=\abs{\det\phi'}.
\end{equation}
 We also define a function $v$ on $\Uu$ by
 \begin{equation}
     \label{eq-vdef}v= \abs{\det \phi' \circ \phi_e^{-1}}={\abs{\det (\phi_e^{-1})'}^{-1}},
 \end{equation}
 where the second representation follows from the
 chain rule. Since $(g\circ\phi_e)'= g \circ \phi_e'$,
 and $\abs{\det g=1}$ as $g:\cx^n\to \cx^n$ is unitary,
 this in fact defines $v$ as a single-valued globally defined function on $\Uu\setminus \phi(\{\det\phi'=0\}) $.  
 \begin{prop} \label{prop-cov}For each measurable subset $E\subset \Uu$ and each $p\geq 1$:
     \[ \norm{P^+_{\Uu}(1_E)}_{L^p(\Uu, v^{p-2})} \leq {\left(\#\Gamma\right)^{-\frac{1}{p}}}\norm{P_\Omega^+(u\cdot 1_{\phi^{-1}(E)})}_{L^p(\Omega)}.\]
 \end{prop}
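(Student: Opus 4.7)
The plan is to reduce everything to a pointwise comparison coming from Bell's transformation formula for the Bergman kernel under the proper map $\phi$, and then push the resulting inequality through a change of variables and a $\Gamma$-invariance/fundamental-domain argument.

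First, I would record the form of Bell's formula appropriate to a ramified Galois cover $\phi:\Omega\to\Uu$ with deck group $\Gamma\subset U(n)$. Since $|\det g'|\equiv 1$ for each $g\in\Gamma$, the formula reads
\[
K_\Uu(\phi(z),\phi(w))\,\det\phi'(z)\,\overline{\det\phi'(w)} \;=\; \sum_{g\in\Gamma} K_\Omega(z,g(w))\,\overline{\det g'(w)},
\]
and taking absolute values yields the pointwise bound
\[
|K_\Uu(\phi(z),\phi(w))|\,u(z)\,u(w) \;\leq\; \sum_{g\in\Gamma}|K_\Omega(z,g(w))|,
\]
where $u=|\det\phi'|$ as in \eqref{eq-udef2}. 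This is the engine of the argument.

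Next, for $z\in\Uu$ set $\tilde z=\phi_e^{-1}(z)\in V$, so that $u(\tilde z)=v(z)$ by \eqref{eq-vdef}. Writing $P^+_\Uu(1_E)(z)$ as an integral over $\Uu$ and performing the biholomorphic change of variable $\zeta=\phi(w)$ with $w$ ranging over $V$ (Jacobian $u(w)^2$), then inserting the kernel inequality above, I get
\[
P^+_\Uu(1_E)(z) \;\leq\; \frac{1}{v(z)}\int_V u(w)\,1_{\phi^{-1}(E)}(w)\sum_{g\in\Gamma}|K_\Omega(\tilde z,g(w))|\,dV(w).
\]
For each fixed $g$ I then change variable $w\mapsto g(w)$; since $g$ is unitary it is measure-preserving, $V$ is sent to $gV$, and both $u$ and $1_{\phi^{-1}(E)}$ are $\Gamma$-invariant (the former because $\phi\circ g=\phi$ and $|\det g'|=1$, the latter because $\phi^{-1}(E)$ is saturated). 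Using that $\{gV:g\in\Gamma\}$ partitions $\Omega$ up to a null set, the sum reassembles into a single integral over $\Omega$, giving the clean pointwise bound
\[
P^+_\Uu(1_E)(z) \;\leq\; \frac{1}{v(z)}\,P^+_\Omega\bigl(u\cdot 1_{\phi^{-1}(E)}\bigr)(\tilde z).
\]

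Finally, I raise both sides to the $p$-th power, integrate against $v^{p-2}\,dV$ on $\Uu$, and change variables once more by $z=\phi(w)$ with $w\in V$: the factor $v(z)^{-p}\cdot v(z)^{p-2}=v(z)^{-2}$ is exactly cancelled by the Jacobian $u(w)^2$, leaving
\[
\norm{P^+_\Uu(1_E)}_{L^p(\Uu,v^{p-2})}^p \;\leq\; \int_V \bigl|P^+_\Omega(u\cdot 1_{\phi^{-1}(E)})(w)\bigr|^p dV(w).
\]
The last step is to observe that $P^+_\Omega(u\cdot 1_{\phi^{-1}(E)})$ is itself $\Gamma$-invariant (since $|K_\Omega(gz,gw)|=|K_\Omega(z,w)|$ together with the invariances of $u$ and $1_{\phi^{-1}(E)}$), so the integral over $V$ equals $(\#\Gamma)^{-1}$ times the integral over $\Omega$. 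Taking $p$-th roots yields the claimed inequality. The only subtle point is to be sure that every instance of Jacobian and deck-transformation invariance correctly uses $|\det g'|=1$; once $\Gamma\subset U(n)$ is fixed the rest is routine bookkeeping.
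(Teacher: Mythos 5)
Your proof is correct and uses the same essential ingredients as the paper's: Bell's transformation formula for the kernel, a fundamental-domain change of variables, and $\Gamma$-invariance. The paper organizes this through two explicit lemmas (an isometry statement for $\phi_\flat:L^p(\Uu,v^{p-2})\to[L^p(\Omega)]_\Gamma$ and a pointwise domination $u\cdot P^+_\Uu(1_E)\circ\phi\leq P^+_\Omega(u\cdot 1_{\phi^{-1}(E)})$ on $\Omega$), while you fold the same computations into a single pointwise bound on $\Uu$ followed by one change of variables and the observation that $P^+_\Omega(u\cdot 1_{\phi^{-1}(E)})$ is $\Gamma$-invariant; this is a reorganization, not a different method.
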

 
 For a function $f$ on $\Uu$, define a function 
$\phi_\flat f$ on $\Omega$ by
\[ \phi_\flat(f)= (f\circ \phi) \abs{\det \phi'} =u\cdot (f\circ \phi).\]
Let $[L^p(\Omega)]_\Gamma$ denote the closed linear subspace of 
$L^p(\Omega)$ consisting of functions $f$ which are $\Gamma$-invariant in 
the sense that for each $g\in \Gamma$ we have
\[g_\flat f = (f\circ g)\cdot \abs{\det g}= f.\]
Then we have
\begin{lem}\label{lem-flat} For each $p\geq 1$, the map $\phi_\flat$ is an  isomorphism of the Banach space $L^p(\Uu, v^{p-2})$
with the subspace $[L^p(\Omega)]_\Gamma$ of 
$L^p(\Omega)$. 
 In fact the map 
\[ \phi_\flat: L^p(\Uu, v^{p-2})\to [L^p(\Omega)]_\Gamma\]
is an isometry up to a constant factor:
\[ \norm{\phi_\flat f}_{L^p(\Omega)} = (\#{\Gamma})^{\frac{1}{p}} \norm{f}_{L^p(\Uu, v^{p-2})}.\]
\end{lem}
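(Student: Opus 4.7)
The plan is to verify three items in sequence: that $\phi_\flat f$ is $\Gamma$-invariant (so that $\phi_\flat$ maps into $[L^p(\Omega)]_\Gamma$), the stated isometric identity, and surjectivity. For the invariance, I will differentiate the defining relation $\phi \circ g = \phi$ by the chain rule to get $\det \phi'(g(w)) \cdot \det g'(w) = \det \phi'(w)$, hence $\abs{\det \phi' \circ g} \cdot \abs{\det g} = u$. Substituting into the definition of $\phi_\flat f$ immediately yields
\[ ((\phi_\flat f) \circ g) \cdot \abs{\det g} = (f\circ\phi\circ g) \cdot \abs{\det \phi'\circ g}\cdot \abs{\det g} = (f\circ \phi) \cdot u = \phi_\flat f, \]
which is the required twisted $\Gamma$-invariance.

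For the norm identity, the plan is two successive changes of variables. First, use the partition $\Omega = \bigcup_{g\in\Gamma} gV$ (up to a null set) and substitute $w = g(w')$ on each piece; since $\Gamma\subset U(n)$, the real Jacobian is $\abs{\det g'}^2 = 1$, and the invariance gives $\abs{(\phi_\flat f)(g(w'))} = \abs{(\phi_\flat f)(w')}$ (as $\abs{\det g}=1$), so
\[ \norm{\phi_\flat f}_{L^p(\Omega)}^p = \sum_{g\in\Gamma}\int_{gV} \abs{\phi_\flat f}^p dV = \#\Gamma \cdot \int_V u^p \abs{f\circ\phi}^p dV.\]
Second, push $V$ forward under the biholomorphism $\phi|_V : V \to \phi(V)$, whose real Jacobian is $u^2$ and along which $u = v\circ \phi$ by the definition \eqref{eq-vdef} of $v$. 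This converts $u^p \abs{f\circ\phi}^p dV(w)$ into $\abs{f(z)}^p v(z)^{p-2} dV(z)$, and since $\Uu\setminus \phi(V)$ is null the integral extends to all of $\Uu$, producing $\norm{f}_{L^p(\Uu, v^{p-2})}^p$.

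Surjectivity follows by constructing an explicit inverse: for $F\in [L^p(\Omega)]_\Gamma$, set $f := (F\circ \phi_e^{-1})/v$ on $\phi(V)$ (and extend by a null set). By construction $\phi_\flat f = F$ pointwise on $V$, and the twisted $\Gamma$-invariance of both sides propagates the equality to all of $\Omega$. The norm identity applied to this $f$ shows the inverse is bounded with the same constant $(\#\Gamma)^{1/p}$, completing the isomorphism. The main bookkeeping step, and essentially the only subtle one, is aligning the factor $\abs{\det g}$ in the twisted invariance with the Jacobian generated by the relation $\phi\circ g = \phi$; once this is made compatible (which the reduction to $\Gamma \subset U(n)$ makes transparent), both changes of variables proceed routinely.
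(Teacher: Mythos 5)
Your proof is correct and follows essentially the same route as the paper: the $\Gamma$-invariance is verified via the chain rule applied to $\phi\circ g=\phi$ (the paper phrases this functorially as $g_\flat\circ\phi_\flat=(\phi\circ g)_\flat$, but the computation is identical), and the norm identity comes from partitioning $\Omega$ into the translates $gV$ of a fundamental domain and changing variables, which you do in two elementary steps ($gV\to V$ by $g^{-1}$, then $V\to\Uu$ by $\phi|_V$) whereas the paper does it in one via the branches $g\circ\phi_e^{-1}$. The only genuine addition is that for surjectivity you exhibit the explicit inverse $f=(F\circ\phi_e^{-1})/v$ and check $\phi_\flat f=F$, where the paper simply cites an external reference for this standard argument.
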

\begin{proof} Let $f\in L^p(\Uu, v^{2-p})$ and
let $h=\phi_\flat f =(f\circ \phi)\cdot \abs{\det \phi'}$. Then for $g\in \Gamma$ we have
$g_\flat h = g_\flat(\phi_\flat f)=(\phi\circ g)_\flat f =\phi_\flat f =h$, using the fact that
$g$ is a deck-transformation of $\phi.$ Therefore
to show that $\phi_\flat$ maps $L^p(\Uu, v^{2-p})$ into $[L^p(\Omega)]_\Gamma$, it suffices to show that $h\in L^p(\Omega)$.

We have
\begin{align}
    \norm{h}^p_{L^p(\Omega)}&=\norm{\phi_\flat f}_{L^p(\Omega)}^p=\int_{\Omega}\abs{f\circ \phi\cdot \det \phi'}^p dV= \sum_{g\in \Gamma} \int_{gV}\abs{f\circ \phi}^p\cdot \abs{ \det \phi'}^p dV  \nonumber\\
    &=\sum_{g\in \Gamma} \int_{\Uu}\abs{f\circ \phi\circ (g \phi_e^{-1})}^p\cdot \abs{\det \phi'\circ (g\phi_e^{-1})}^{p} \cdot \abs{\det(g\phi_e^{-1})'}^2dV\label{eq-biholo}\\
    &= \sum_{g\in \Gamma} \int_{\Uu}\abs{f\circ \phi\circ \phi_e^{-1}}^p\cdot \abs{\det \phi'\circ \phi_e^{-1}}^{p} \cdot \abs{\det(\phi_e^{-1})'}^2dV\nonumber\\
    &= (\#\Gamma)\cdot\int_{\Uu}\abs{f}^p \cdot\abs{\det \phi'\circ \phi_e^{-1}}^{p-2}\,dV,\label{eq-laststep}
     \end{align}
where in \eqref{eq-biholo} we have used the fact that $g\phi_e^{-1}:\Uu\to gV$ is a biholomorphism, and in \eqref{eq-laststep} we have used \eqref{eq-vdef}. A standard argument (see \cite[Proposition~5.14]{advances}) shows 
surjectivity onto $[L^p(\Omega)]_\Gamma$.
\end{proof}
\begin{lem}\label{lem-bell} We have, for each measurable subset $E\subset\Uu$,
\[ u\cdot P_\Uu^+(1_E)\circ \phi \leq  P_\Omega^+(u\cdot 1_{\phi^{-1}(E)}),\]
where both sides of the inequality are nonnegative functions on $\Omega$.
\end{lem}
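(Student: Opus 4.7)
The plan is to combine Bell's transformation formula for the Bergman kernel under the proper map $\phi$ with a straightforward change of variables and the $\Gamma$-invariance of both $u$ and $1_{\phi^{-1}(E)}$. The key identity I will invoke (or derive) is the pointwise kernel transformation
\[
 u(z)\,K_\Uu(\phi(z),\phi(w))\,u(w) \;=\; \sum_{g\in\Gamma} K_\Omega(z, g(w)),
\]
valid for $z\in\Omega$ and $w$ in the fundamental domain $V$ (away from the branch locus). This may be derived by writing out the intertwining relation $\phi_\flat \circ P_\Uu = P_\Omega \circ \phi_\flat$ (which is a companion to Lemma~\ref{lem-flat}) as an equality of integral kernels and comparing integrands after reducing the integral over $\Uu$ to one over $V$ via $dV(\phi(w))=u(w)^2\,dV(w)$.

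Once that identity is in hand, the triangle inequality gives the pointwise estimate
\[
 u(z)\,\abs{K_\Uu(\phi(z),\phi(w))}\,u(w) \;\le\; \sum_{g\in\Gamma} \abs{K_\Omega(z,g(w))},
\]
which is the essential ingredient. I would then unfold the left-hand side of the lemma as follows. Changing variables $\eta=\phi(w)$, $w\in V$, in the integral defining $P_\Uu^+(1_E)(\phi(z))$ gives
\[
 u(z)\cdot P_\Uu^+(1_E)(\phi(z)) \;=\; \int_V u(z)\,\abs{K_\Uu(\phi(z),\phi(w))}\,u(w)\cdot 1_{\phi^{-1}(E)}(w)\,u(w)\,dV(w),
\]
since $1_E\circ\phi = 1_{\phi^{-1}(E)}$. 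Applying the kernel estimate inside the integral produces
\[
 u(z)\cdot P_\Uu^+(1_E)(\phi(z)) \;\le\; \int_V\!\Big(\sum_{g\in\Gamma}\abs{K_\Omega(z,g(w))}\Big)\, 1_{\phi^{-1}(E)}(w)\,u(w)\,dV(w).
\]

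To finish, I would use that $u$ is $\Gamma$-invariant (because $\det(\phi\circ g)'=\det\phi'$ combined with $\abs{\det g'}=1$), that $1_{\phi^{-1}(E)}$ is $\Gamma$-invariant, and that $\abs{\det g'}=1$ makes each $g\in\Gamma$ a measure-preserving biholomorphism. Substituting $w'=g(w)$ in the $g$-th summand turns the integral over $V$ into the integral over $gV$, and reassembles the sum over $g$ into an integral over $\Omega=\bigsqcup_g gV$ (up to a null set). The result is exactly
\[
 \int_\Omega \abs{K_\Omega(z,w')}\,u(w')\,1_{\phi^{-1}(E)}(w')\,dV(w') \;=\; P_\Omega^+(u\cdot 1_{\phi^{-1}(E)})(z),
\]
as desired.

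The only nontrivial step is the derivation of the Bell-type kernel identity; everything else is bookkeeping via change of variables and $\Gamma$-invariance. Since the excerpt already works with $\phi_\flat$ as an isometry (Lemma~\ref{lem-flat}) and uses the covering picture elsewhere, the kernel identity can either be cited from the literature on ramified covers (e.g.\ Bell, or the references \cite{CJM, dallara} in the excerpt) or derived in a short paragraph by testing the intertwining $\phi_\flat P_\Uu = P_\Omega\phi_\flat$ against arbitrary $L^2$ functions.
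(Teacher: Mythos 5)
Your argument follows essentially the same route as the paper: invoke Bell's transformation formula for the kernel under the branched cover $\phi$, take moduli, unfold $P_\Uu^+(1_E)\circ\phi$ by a change of variables, and reassemble the $\Gamma$-sum into an integral over $\Omega$. The paper keeps the intermediate integral over $\Uu$ and pushes each summand forward to a copy $gV$, whereas you pull back to a single fundamental domain $V$ first and then spread out over the translates $gV$; this is purely a bookkeeping difference.

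One small inaccuracy worth correcting: the ``key identity'' as you have written it,
\[
u(z)\,K_\Uu(\phi(z),\phi(w))\,u(w)\;=\;\sum_{g\in\Gamma}K_\Omega(z,g(w)),
\]
is not right as an \emph{equality}. The honest Bell formula, restricted to $w\in V$, is
\[
\det\phi'(z)\,K_\Uu\bigl(\phi(z),\phi(w)\bigr)\,\overline{\det\phi'(w)}\;=\;\sum_{g\in\Gamma}K_\Omega\bigl(z,g(w)\bigr)\,\overline{\det g'(w)}\,;
\]
on the left you must keep the holomorphic Jacobian $\det\phi'$ (not its modulus $u=\abs{\det\phi'}$), and the right-hand side carries the deck-group Jacobian factors $\overline{\det g'(w)}$. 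Since each $g\in\Gamma$ is unitary these factors are unimodular constants, so the inequality you actually use,
\[
u(z)\,\abs{K_\Uu(\phi(z),\phi(w))}\,u(w)\;\le\;\sum_{g\in\Gamma}\abs{K_\Omega(z,g(w))},
\]
is nevertheless correct, and the remainder of the argument (the change of variables, and the substitution $w'=g(w)$ using $\Gamma$-invariance of $u$ and $1_{\phi^{-1}(E)}$ and $\abs{\det g'}=1$) is sound.
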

\begin{proof} Thanks to the Bell transformation formula for the Bergman kernel under a biholomorphic mapping, we have for $z\in \Omega$ and $w\in \Uu$
\[\det \phi'(z) K_{\Uu}(\phi(z),w)= \sum_{g\in \Gamma} K_\Omega(z, g\circ\phi_e^{-1}(w))\cdot \ol{\det (g\circ \phi_e^{-1})'(w)},\]
which leads to 
\begin{equation}\label{eq-absbergman1}
    \abs{\det \phi'(z)}\cdot \abs{K_{\Uu}(\phi(z),w)} \leq  \sum_{g\in \Gamma} \abs{K_\Omega(z, g\circ\phi_e^{-1}(w))}\cdot \abs{\det (g\circ \phi_e^{-1})'(w)}.\end{equation}
   For $z\in \Omega$, we have
    \begin{align}
        (u\cdot P_\Uu^+(1_E)\circ \phi) (z)
        &=\abs{\det \phi'(z)}\cdot P_\Uu^+(1_E)(\phi(z))\nonumber\\
        &= \int_\Uu \abs{\det \phi'(z)}\cdot\abs{K_\Uu(\phi(z),w)} 1_E(w)dV(w)\nonumber\\
        &\leq \sum_{g\in \Gamma}\int_\Uu \abs{K_\Omega(z, g\circ\phi_e^{-1}(w))}\cdot \abs{\det (g\circ \phi_e^{-1})'(w)}1_E(w)dV(w) \label{eq-comp3}
    \end{align}
    where in \eqref{eq-comp3} we have used \eqref{eq-absbergman1}. If $V$ is a fundamental domain for the action of $\Gamma$ on $\Omega$,
    for $g\in \Gamma$, the restricted map $\phi:gV\to \phi(gV)\subset \Uu\setminus Z$ is a biholomorphism onto its image, whose inverse is $g\circ \phi_e^{-1}$. Therefore by the change of variables formula, we have
    \begin{align}
        \eqref{eq-comp3}&=\sum_{g\in \Gamma}\int_{gV}\abs{K_\Omega(z, \zeta)}\cdot \abs{\det (g\circ \phi_e^{-1})'(\phi(\zeta))}\cdot 1_E(\phi(\zeta))\cdot \abs{\phi'(\zeta)}^2dV(\zeta)\nonumber\\
        &= \int_{\Omega}\abs{K_\Omega(z, \zeta)}\cdot \abs{\det (g\circ \phi_e^{-1})'(\phi(\zeta))}\cdot 1_E(\phi(\zeta))\cdot \abs{\phi'(\zeta)}^2dV(\zeta) \label{eq-comp4}\\&=P_\Omega^+(u\cdot 1_{\phi^{-1}(E)})(z).\nonumber
        \end{align}
    where in \eqref{eq-comp4}, we have used the fact (derived from the chain rule applied to $(g\circ \phi_e^{-1})\circ \phi=\mathrm{id}_{gV}$) that
    $\det (g\circ \phi_e^{-1})'(\phi(\zeta))\cdot \det \phi'(\zeta)=1.$
\end{proof}
\begin{proof}[Proof of Proposition~\ref{prop-cov}]
We have 
\begin{align*}
\norm{P^+_{\Uu}(1_E)}_{L^p(\Uu, v^{p-2})}&= (\#\Gamma)^{-\frac{1}{p}} \norm{\phi_\flat P^+_{\Uu}(1_E)}_{L^p(\Omega)}& \text{ by Lemma~\ref{lem-flat}}\\
&= (\#\Gamma)^{-\frac{1}{p}} \norm{u\cdot P^+_{\Uu}(1_E)\circ \phi }_{L^p(\Omega)}\\
&\leq (\#\Gamma)^{-\frac{1}{p}} \norm{ P_\Omega^+(u\cdot 1_{\phi^{-1}(E)}) }_{L^p(\Omega)}.&\text{ by Lemma~\ref{lem-bell}}
\end{align*}
    \end{proof}
\subsection{Facts about monomial mappings and  monomial polyhedra} We now recall some notation and results of \cite{CJM}. We will use the ``vector and matrix power" notation generalizing the classical multi-index notation to simplify the writing.
If $\alpha=(\alpha_1,\dots,\alpha_n)$ is a $1\times n$ row vector, and $z=(z_1,\dots, z_n)^T$ is column vector of  size $n\times 1$, we will 
denote
\begin{equation}
    \label{eq-vectorpower}
    z^\alpha =z_1^{\alpha_1}\dots z_n^{\alpha_n}=\prod_{j=1}^n z_j^{\alpha_j},
\end{equation}
whenever the powers $z_j^{\alpha_j}$ make sense, and where we use the convention 
$0^0=1.$

For an $n\times n$ matrix $P$, we denote the element at the $j$-th row and $k$-th column of $P$ by $p_k^j$ and the $j$-th row of $P$ by $p^j$, which we think of as a multi-index of size $n$.
We then define for a column vector $z$ of size $n$,
\begin{equation}
    \label{eq-matrixpower}
    z^P=\begin{pmatrix}
     z^{p^1}\\\vdots\\ z^{p^n}
    \end{pmatrix}= \begin{pmatrix}
     z_1^{p^1_1}z_2^{p^1_2}\cdots z_n^{p^1_n}\\\vdots\\  z_1^{p^n_1}z_2^{p^n_2}\cdots z_n^{p^n_n}
    \end{pmatrix}.
\end{equation}
The following computation generalizes the formula $\frac{d}{dx}x^\alpha= \alpha x^{\alpha-1}$  (see \cite[Lemma~3.8]{CJM} or \cite[Lemma~4.2]{nagelpramanik}):
\begin{prop}\label{prop-derivative} Let $\phi(z)=z^A$. Then $ \det \phi'(z)=\det A\cdot z^{\one A-\one}.$
    \end{prop}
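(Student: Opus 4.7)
The plan is to compute the Jacobian matrix $\phi'(z)$ entry by entry, factor it as a product of diagonal matrices flanking $A$, and then take the determinant by multiplicativity.

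First, I would unpack the notation \eqref{eq-matrixpower}: writing $a^j$ for the $j$-th row of $A$, the map is $\phi_j(z) = z^{a^j} = z_1^{a^j_1}\cdots z_n^{a^j_n}$. Differentiating one factor at a time gives
\[ \frac{\partial \phi_j}{\partial z_k}(z) = a^j_k \, z_1^{a^j_1}\cdots z_k^{a^j_k - 1}\cdots z_n^{a^j_n} = \frac{a^j_k}{z_k}\, z^{a^j}, \]
valid on the open set where all $z_k \neq 0$.

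The key observation is that this entrywise formula factors the Jacobian as
\[ \phi'(z) = \mathrm{diag}(z^{a^1}, \ldots, z^{a^n}) \cdot A \cdot \mathrm{diag}(z_1^{-1},\ldots, z_n^{-1}), \]
since the $(j,k)$ entry of the right side is $z^{a^j}\cdot a^j_k \cdot z_k^{-1}$, matching the computation above. Taking determinants and using that $\sum_{j=1}^n a^j = \one A$ as a row vector, we get
\[ \det \phi'(z) = \Bigl(\prod_{j=1}^n z^{a^j}\Bigr)\cdot \det A \cdot \prod_{k=1}^n z_k^{-1} = z^{\one A}\cdot \det A \cdot z^{-\one} = \det A \cdot z^{\one A - \one}. \]

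There is no real obstacle here; the only subtlety is that the factorization above is stated on the open set $\{z_1 z_2 \cdots z_n \neq 0\}$, but since both sides of the identity $\det \phi'(z) = \det A \cdot z^{\one A - \one}$ are polynomials in $z$ (the entries of $A$ being nonnegative integers, so that $\one A - \one$ has nonnegative components whenever the identity is nontrivial), the equality on a dense open set extends to all of $\cx^n$ by the identity theorem for polynomials.
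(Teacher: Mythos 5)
Your proof is correct: the factorization $\phi'(z) = \mathrm{diag}(z^{a^1},\ldots,z^{a^n})\cdot A\cdot \mathrm{diag}(z_1^{-1},\ldots,z_n^{-1})$ on the set $\{z_1\cdots z_n\neq 0\}$, followed by multiplicativity of the determinant and extension by polynomial continuity, is the standard argument. The paper does not actually include a proof of this proposition — it merely cites \cite[Lemma~3.8]{CJM} and \cite[Lemma~4.2]{nagelpramanik} — so your computation fills that gap. One small simplification available in context: since $\det A\neq 0$ and every column of $A$ consists of nonnegative integers with at least one positive entry (it is $\delta_j/\gcd\delta_j$ with $\delta_j\neq 0$), each column sum $\one a_k\geq 1$ automatically, so $\one A-\one$ always has nonnegative components and you do not need the hedge ``whenever the identity is nontrivial.''
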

By permuting the rows of the matrix $B$ defining $\Uu_B$
we can show the following:
\begin{prop}[{\hspace{1sp}\cite[Proposition~3.2]{CJM}}]
    \label{prop-mono1}
The matrix $B$ can be assumed to satisfy the conditions
       \begin{enumerate}
           \item $\det B>0$, and
           \item each entry of the inverse $B^{-1}$ is nonnegative.
       \end{enumerate}
\end{prop}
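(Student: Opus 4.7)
The plan is to exploit the symmetry of the defining inequalities in \eqref{eq-udef} under row permutations of $B$, together with two facts already recorded in the paper: that $\Uu_B$ bounded forces every entry of $\Delta = \adj B$ to be nonnegative, and that $\det\Delta \neq 0$.

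First I would observe that the $n$ inequalities $\prod_k |z_k|^{b^j_k} < 1$ indexed by $j = 1,\dots,n$ are treated symmetrically in the definition of $\Uu_B$, so if $\pi$ is any permutation of $\{1,\dots,n\}$ and $P_\pi$ is the associated permutation matrix, then $\Uu_{P_\pi B} = \Uu_B$ as subsets of $\cx^n$. Since $\det(P_\pi B) = \mathrm{sgn}(\pi)\det B$, a single transposition of rows flips the sign of $\det B$ without changing the underlying domain. From $\det \Delta = (\det B)^{n-1}$ and $\det \Delta \neq 0$ I get $\det B \neq 0$, so after replacing $B$ by $P_\pi B$ for an appropriate transposition if necessary, I may assume $\det B > 0$, proving (1).

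For (2), Cramer's rule gives
\[
B^{-1} = \frac{1}{\det B}\adj B = \frac{1}{\det B}\,\Delta.
\]
Since every entry of $\Delta$ is a nonnegative integer and $\det B > 0$ by part (1), every entry of $B^{-1}$ is nonnegative, which is (2). Note that after the row permutation the new $\Delta$ is obtained from the old one by a corresponding column permutation (because $\adj(P_\pi B) = \adj B \cdot P_\pi^{-1}$ up to sign that is absorbed in the new $\det B$), so it still has nonnegative entries; nothing about the geometric hypothesis ``$\Uu_B$ bounded'' is disturbed by the replacement.

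There is essentially no obstacle: the only fact being used nontrivially is the already-granted observation that boundedness of $\Uu_B$ implies nonnegativity of every entry of $\adj B$ and $\det \adj B \neq 0$. The rest is Cramer's rule plus the permutation invariance of the defining inequalities, so the argument is short and combinatorial rather than analytic.
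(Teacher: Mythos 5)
Your overall strategy (permute rows to fix the sign of $\det B$, then read off sign conditions on $B^{-1}$) is the right one, but the argument as written has a sign gap that risks circularity. You take as given that boundedness of $\Uu_B$ forces every entry of $\Delta=\adj B$ to be nonnegative, \emph{unconditionally on the sign of $\det B$}. That cannot be right: if it held for $B$ then it would also hold for $B'=P_\pi B$ with $\pi$ a transposition (since $\Uu_{B'}=\Uu_B$), yet $\adj(P_\pi B)=\adj B\cdot\adj P_\pi=-\,\Delta\,P_\pi$, so $\adj B'$ is the \emph{negative} of a column permutation of $\Delta$ and would have a strictly negative entry whenever $\Delta\neq 0$. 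In other words, ``$\adj B\geq 0$'' is only true after the very normalization $\det B>0$ that you are trying to establish, so invoking it as an input makes the argument circular. Your final parenthetical remark that the new $\Delta$ ``still has nonnegative entries'' after a row transposition has exactly this sign error: the sign of $\det P_\pi$ is $-1$, so it is not ``absorbed'' harmlessly.

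The fix is to work with $B^{-1}$ instead of $\adj B$, because the fact that boundedness hands you is genuinely sign-independent there. Writing $x_k=\log|z_k|$, the domain $\Uu_B$ restricted to $(\cx^*)^n$ corresponds in log-coordinates to the open cone $\{x\in\rl^n:Bx<0\}$. Boundedness of $\Uu_B$ and nonemptiness force this cone to be nonempty, $B$ to be invertible, and the cone to lie inside the nonpositive orthant; equivalently, $B^{-1}$ maps the open negative orthant into the nonpositive orthant, which is equivalent to every entry of $B^{-1}$ being nonnegative (test with vectors concentrated in one coordinate). This gives condition (2) directly, for any valid $B$, with no sign hypothesis. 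Then permute rows to arrange $\det B>0$, observing that this replaces $B^{-1}$ by $B^{-1}P_\pi^{-1}$, a \emph{column} permutation, which preserves entrywise nonnegativity. Once both are in hand, $\Delta=(\det B)B^{-1}\geq 0$ follows and is consistent. Note also that the paper itself does not prove this proposition but cites it from the reference; the imprecise phrasing ``each entry of $\Delta$ is nonnegative'' in the surrounding text of the paper is meant to be read \emph{after} the normalization of Proposition~\ref{prop-mono1} is in force, so it is not a free-standing input you may use before establishing that normalization.
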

We next recall the following  description of a monomial polyhedron as a quotient domain.  By a \emph{punctured polydisc} we mean a domain $\Omega$ in $\cx^n$ which can be represented as a product of one dimensional domains
$ \Omega= \Omega_1\times \dots\times \Omega_n=\{z\in \cx^n: z_j\in \Omega_j\}$,
where each $\Omega_j$ is either a disc $\{\abs{z_j}<1\}\subset \cx$ or a punctured disc $\{0<\abs{z_j}<1\}\subset \cx.$

With this notation, we can state the following result, a version of which was proved in \cite{CJM}:
\begin{prop} \label{prop-mono2}
   Let  $\Uu_B$ be a monomial polyhedron as in 
   \eqref{eq-udef}, where $B$ satisfies the conditions
   of Proposition~\ref{prop-mono1},
and let $A$
        be as in \eqref{eq-adef}. Then there is a punctured polydisc $\Omega$ such that the  map
        $\phi:\Omega\to \Uu_B$ given by
        $\phi(z)=z^A$ defines a  proper holomorphic map of quotient type of degree $\abs{\det A}$. The deck-transformation group $\Gamma$ consists of the automorphisms of $\Omega$ of the form
\begin{equation}
    \label{eq-sigmanu}
    \sigma_\nu(z)=\left(e^{2\pi i c^1\nu}z_1,e^{2\pi ic^2 \nu}z_2, \dots, e^{2\pi i c^n \nu}z_n\right), \quad
    \nu\in \Z^n,
\end{equation}
        where $c^1,\dots, c^n\in \mathbb{Q}^{1\times n}$ are the rows of the matrix $C=A^{-1}.$
        
\end{prop}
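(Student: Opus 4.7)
The central algebraic input is the identity $BA = (\det B)\,D^{-1}$, where $D = \mathrm{diag}(\gcd\delta_1,\ldots,\gcd\delta_n)$. This follows from $B\Delta = (\det B)I$ together with the factorization $\Delta = AD$, which just encodes the definition of the columns $a_j$. Since $BA$ has integer entries, $(\det B)/\gcd\delta_j$ is a positive integer for each $j$. The payoff is that under $\phi(z) = z^A$ one computes
\[
\phi(z)^{b^j} \;=\; z^{(BA)^j} \;=\; z_j^{(\det B)/\gcd\delta_j},
\]
so the defining inequality $|w^{b^j}|<1$ of $\Uu_B$ pulls back along $w=\phi(z)$ to the single constraint $|z_j|<1$. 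In particular $\phi(\D^n)\subseteq\overline{\Uu_B}$.

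I would then construct $\Omega$ by deciding which coordinate axes to puncture. A point $w\in\cx^n$ with $w_k=0$ lies in $\Uu_B$ only when column $k$ of $B$ is entirely nonnegative (otherwise $|w^{b^l}|=\infty$ for some $l$). Since $\phi(z)_k = z^{(a^k)}$ vanishes precisely when some $z_j$ with $a^k_j>0$ does, I set $\Omega_j = \D^*$ exactly for those $j$ such that the vanishing of some $\phi(z)_k$ with $a^k_j>0$ would force $\phi(z)\notin\Uu_B$, and $\Omega_j=\D$ otherwise. With this bookkeeping one checks $\phi(\Omega)\subseteq\Uu_B$.

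Surjectivity and properness then follow from the diagonalization. For $w\in\Uu_B\cap(\cx^*)^n$ the equation $z^A=w$ has exactly $|\det A|$ solutions in $(\cx^*)^n$ by standard toric covering arguments (or Smith normal form of $A$), and the pullback identity above forces each solution to satisfy $|z_j|<1$, hence to lie in $\Omega$. The boundary strata $\{w_k=0\}\cap\Uu_B$ are handled by taking limits along coordinate hyperplanes of $\Omega$. Properness is immediate from $|\phi(z)^{b^j}|=|z_j|^{(\det B)/\gcd\delta_j}$: any sequence in $\Omega$ without a limit in $\Omega$ is pushed by $\phi$ out to $\partial\Uu_B$.

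Finally, for the deck group: a diagonal unitary automorphism $\sigma(z)=(\lambda_1 z_1,\ldots,\lambda_n z_n)$ satisfies $\phi\circ\sigma=\phi$ iff $\lambda^{(a^k)}=1$ for every row $a^k$ of $A$, equivalently (writing $\lambda_j=e^{2\pi i\theta_j}$) $A\theta\in\Z^n$. Setting $\nu=A\theta$ yields $\theta=C\nu$, so $\theta_j=c^j\nu$, which is precisely \eqref{eq-sigmanu}. The assignment $\nu\mapsto\sigma_\nu$ has kernel $A\Z^n$, giving $|\Gamma|=|\Z^n/A\Z^n|=|\det A|$, which matches the degree of $\phi$ and confirms $\Gamma$ is the full deck group. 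I expect the main obstacle to be the combinatorial bookkeeping in the construction of $\Omega$: correctly matching the support patterns of the rows of $A$ with the nonnegativity patterns of the columns of $B$ to identify exactly which axes must be removed. All other steps are driven mechanically by the diagonalization $BA=(\det B)D^{-1}$.
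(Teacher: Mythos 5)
Your proof takes a genuinely different route from the paper's, which simply reduces matters to a result of \cite{CJM}: there one uses that $z\mapsto z^\Delta$ factors through $z\mapsto z^A$ and a proper monomial self-map of $\Omega$ (namely $z\mapsto z^D$, with $D$ the diagonal matrix of column gcds, since $\Delta=AD$), so that the quotient-type structure established in \cite{CJM} for $z\mapsto z^\Delta$ transfers to $z\mapsto z^A$. You instead give a self-contained toric argument. The diagonalization $BA=(\det B)D^{-1}$ you identify is correct and genuinely clarifying: it exhibits the defining inequalities of $\Uu_B$ as pulling back under $z\mapsto z^A$ precisely to $\abs{z_j}<1$, and it drives a clean deck-group computation (your count $\#(\Z^n/A\Z^n)=\abs{\det A}$ is correct and does show the diagonal rotations exhaust $\Gamma$, since they already match the degree and act freely on generic fibers).

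The gap is the one you forecast. Your rule for punctures — remove the $j$-th axis exactly when some $k$ with $a^k_j>0$ has column $k$ of $B$ containing a negative entry — is the right one and suffices for $\phi(\Omega)\subseteq\Uu_B$. But surjectivity onto a point $w\in\Uu_B$ with $w_k=0$ (allowed exactly when column $k$ of $B$ is nonnegative) requires producing some \emph{unpunctured} $j$ with $a^k_j>0$ along which to set $z_j=0$, and your phrase ``limits along coordinate hyperplanes'' elides exactly that combinatorial verification. Your properness claim has a companion defect: $\abs{\phi(z)^{b^j}}=\abs{z_j}^{(\det B)/\gcd\delta_j}$ handles $\abs{z_j}\to 1$, but if $z_j\to 0$ along a punctured axis this quantity tends to $0$ and does not by itself push $\phi(z)$ off $\Uu_B$; there you must invoke the structure of the punctures to conclude $\phi(z)_k\to 0$ for some $k$ whose $B$-column has a negative entry. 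Neither issue is likely fatal, but both are precisely the bookkeeping you deferred — which, in the paper, is absorbed by the citation to \cite{CJM}.
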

\emph{Remarks on the proof of Proposition~\ref{prop-mono2}:} This is essentially Theorem~\ref{thm-polydisc}2 of 
\cite{CJM}, with the exception that there
the matrix $A$  of \eqref{eq-adef} is replaced by the adjugate $\Delta=\adj B =(\det B)\cdot B^{-1}$ of 
$B$,  where  $A$ is obtained from $\Delta=\adj B$ by dividing each column by its gcd. 
We can write 
$ \Delta=D\cdot A,$
where $A$ is as in \eqref{eq-adef}, and 
$D$ is the diagonal matrix 
$D=\mathrm{diag}(\gcd(\delta_1),\dots, \gcd(\delta_n))\in\Z^{n\times n}.$
Since $z\mapsto z^D$ is a proper holomorphic map
of $\Omega$ onto itself, and $z^{DA}=(z^A)^D$ it follows that the maps $z\mapsto z^\Delta$ and $z\mapsto z^A$ are both
proper holomorphic maps of quotient type from $\Omega$ into $\Uu_B$. The proof of
\cite[Theorem~\ref{thm-polydisc}2]{CJM} only uses this fact. 

\subsection{Proofs of Theorem~\ref{thm-genest} and Proposition~\ref{prop-restricted}} 
The following is an immediate consequence of Proposition~\ref{prop-cov} and the description of $\Uu_B$ as a quotient domain given by Proposition~\ref{prop-mono2}:
 \begin{prop}
     \label{prop-cov2}
     Let $\Uu_B\subset\cx^n$ be as in \eqref{eq-udef},
     let $A$ be the matrix in \eqref{eq-adef}, and let $w=  \rho_{\one - \one A^{-1}}$ be as in \eqref{eq-vdef-poly}. 
     Then for $p> 1$ we have for each measurable subset $E\subset \Uu_B$:
\begin{equation}
     \norm{P^+_{\Uu_B}(1_E)}_{L^{p}(\Uu_B,w^{p-2})} 
    \lesssim \norm{P^+_{\D^n}(\rho_{\one A-\one}\cdot 1_{\phi^{-1}(E)})}_{L^{p}(\D^n)}.
\end{equation}
\end{prop}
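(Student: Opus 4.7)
The plan is to directly apply Proposition~\ref{prop-cov} with the covering map $\phi:\Omega\to \Uu_B$, $\phi(z)=z^A$, supplied by Proposition~\ref{prop-mono2}, and then translate the functions $u=\abs{\det\phi'}$ and $v=\abs{\det\phi'\circ\phi_e^{-1}}$ appearing there into the quantities $\rho_{\one A-\one}$ and $w=\rho_{\one-\one A^{-1}}$ appearing in the statement. Since $\Omega$ is a punctured polydisc and thus agrees with $\D^n$ up to a set of measure zero, the Bergman kernels of $\Omega$ and $\D^n$ coincide, so $P^+_\Omega$ can be replaced by $P^+_{\D^n}$ at the end.

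First I would compute $u$. Proposition~\ref{prop-derivative} gives $\det\phi'(z)=\det A\cdot z^{\one A-\one}$, so
\[u(z)=\abs{\det A}\cdot\rho_{\one A-\one}(z).\]
Consequently $P^+_\Omega(u\cdot 1_{\phi^{-1}(E)})=\abs{\det A}\cdot P^+_{\D^n}(\rho_{\one A-\one}\cdot 1_{\phi^{-1}(E)})$, which produces the right-hand side of the target inequality (up to a constant).

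Next, to handle $v$, I would use that, writing $\log\abs{z}$ and $\log\abs{\zeta}$ as column vectors, $\zeta=z^A$ gives the relation $\log\abs{\zeta}=A\log\abs{z}$, hence $\log\abs{z}=A^{-1}\log\abs{\zeta}$ on a fundamental domain. Therefore, for $\zeta\in \Uu_B$,
\[v(\zeta)=\abs{\det A}\cdot \rho_{\one A-\one}(\phi_e^{-1}(\zeta))=\abs{\det A}\cdot \exp\bigl((\one A-\one)A^{-1}\log\abs{\zeta}\bigr)=\abs{\det A}\cdot w(\zeta),\]
where I used the key (but routine) identity $(\one A-\one)A^{-1}=\one-\one A^{-1}$. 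Thus $v$ is a constant multiple of $w$, so $\norm{f}_{L^p(\Uu_B,v^{p-2})}=\abs{\det A}^{(p-2)/p}\norm{f}_{L^p(\Uu_B,w^{p-2})}$.

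Finally I would assemble the pieces: plugging the above into Proposition~\ref{prop-cov} and invoking $\#\Gamma=\abs{\det A}$ (from Proposition~\ref{prop-mono2}), all factors of $\abs{\det A}$ combine into a harmless constant $\abs{\det A}^{1/p}$, and the identification $P^+_\Omega=P^+_{\D^n}$ (valid because a punctured polydisc differs from $\D^n$ by a null set) yields exactly the claimed estimate. There is no serious obstacle here; the only thing requiring a modicum of bookkeeping is the exponent arithmetic in verifying that the constants on the two sides match, which reduces to $1-\frac{p-2}{p}=\frac{2}{p}$ combined with $(\#\Gamma)^{-1/p}=\abs{\det A}^{-1/p}$.
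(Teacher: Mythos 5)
Your proposal is correct and takes essentially the same route as the paper's: apply Proposition~\ref{prop-cov} to $\phi(z)=z^A$, compute $u=\abs{\det A}\,\rho_{\one A-\one}$ via Proposition~\ref{prop-derivative}, identify $v=\abs{\det A}\,w$ via the exponent identity $(\one A-\one)A^{-1}=\one-\one A^{-1}$, and note $P^+_\Omega=P^+_{\D^n}$ since $\Omega$ and $\D^n$ differ by a null set. The only cosmetic difference is that you pass through logarithmic radial coordinates to verify $v=\abs{\det A}\,w$, whereas the paper manipulates the matrix powers $\abs{(\zeta^{A^{-1}})^{\one A-\one}}=\abs{\zeta^{(\one A-\one)A^{-1}}}$ directly; both are the same computation.
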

\begin{proof}
We apply Proposition~\ref{prop-cov} to the branched covering map  $\phi:\Omega\to \Uu_B$  given by $\phi(z)=z^A,$
of Proposition~\ref{prop-mono2}. Let $C=A^{-1},$
which has nonnegative rational entries by  Proposition~\ref{prop-mono1} above. 
Then a branch of the inverse of $\phi$ is given by $\phi_e^{-1}=z^C$, where the powers $z^{c^j_j}$ needed in definition \eqref{eq-matrixpower} of matrix powers are taken to be arbitrary branches of the rational powers. Further, we have
$\det \phi'(z)= \det A \cdot z^{\one A -\one}$ by 
Proposition~\ref{prop-derivative},
so the function $u=\abs{\det \phi'}$ of \eqref{eq-udef2} that occurs in Proposition~\ref{prop-cov} is given in this case by
\[u(z)= \abs{\det \phi'(z)}= \abs{\det A}\cdot\abs{z^{\one A -\one}}=\abs{\det A}\cdot \rho_{\one A-\one}(z).\]
For this map $\phi$, the function $v$ of \eqref{eq-vdef} specializes to
\begin{align}
    v(\zeta)&=\abs{\det \phi'\circ \phi_{e}^{-1}(\zeta)}=\abs{\det A} \abs{(\zeta^{A^{-1}})^{\one A-\one}}\nonumber\\
    &=\abs{\det A} \abs{\zeta^{(\one A-\one)A^{-1}}}
    = \abs{\det A} \abs{\zeta^{\one -\one A^{-1}}}\nonumber=\abs{\det A} \rho_{\one-\one A^{-1}}(\zeta)\nonumber\\
    &= \abs{\det A} w(\zeta).\label{eq-vw}
\end{align}
Since $v$ and $w$ differ only in a constant factor,
\[ \norm{P^+_{\Uu_B}(1_E)}_{L^p(\Uu_B, v^{p-2})}\approx \norm{P^+_{\Uu_B}(1_E)}_{L^p(\Uu_B, w^{p-2})},\]
and since $\Omega$ differs from $\D^n$ only in a set of measure zero,  
\[ \norm{P_\Omega^+(u\cdot 1_{\phi^{-1}(E)})}_{L^p(\Omega)}= \norm{P^+_{\D^n}(\abs{\det A}\cdot \rho_{\one A-\one}\cdot 1_{\phi^{-1}(E)}  )}_{L^p(\D^n)} \approx \norm{P^+_{\D^n}( \rho_{\one A-\one}\cdot 1_{\phi^{-1}(E)}  )}_{L^p(\D^n)}, \]
the result follows by Proposition~\ref{prop-cov}. 
\end{proof} 
 
\begin{proof}[Proof of Theorem~\ref{thm-genest}]
    By Proposition~\ref{prop-cov2}, for each
measurable $E\subset \Uu_B$, we have
\begin{align}
    \norm{P^+_{\Uu_B}(1_E)}_{L^{p_*}(\Uu_B,w^{p_*-2})}^{p_*} &\lesssim \norm{P^+_{\D^n}(\rho_{\one A-\one}\cdot 1_{\phi^{-1}(E)})}_{L^{p_*}(\D^n)}^{p_*}\nonumber\\
    & \lesssim  \int_{\phi^{-1}(E)} \rho_{2(\one A-\one)}\cdot(-\log \rho_{\one A-\one})^{(p_*-1)(m(\one A-\one)-1)}\,dV, \label{eq-comp11} \end{align}
 using inequality \eqref{eq-poyest1} of Theorem~\ref{thm-polydisc}. Now from the definition~\eqref{eq-malpha},
 we see that $m(\one A-\one)$ is the number of largest elements in the $n-$tuple $\one A-\one=(\one a_1-1,\one a_2-1,\dots, \one a_n-1)$ and therefore coincides with the quantity $m$ of \eqref{eq-m} associated to the 
 monomial polyhedron $\Uu_B$. Therefore, since $\abs{\det \phi'}\approx\rho_{\one A-\one}$ by Proposition~\ref{prop-derivative}:
  
    \begin{align}
  \eqref{eq-comp11} &\approx \int_{\phi^{-1}(E)} (-\log \abs{\det \phi'})^{(p_*-1)(m-1)}\,\abs{\det \phi'}^2dV\nonumber\\
    &\approx \int_E (-\log \abs{\det \phi'\circ \phi_{e}^{-1}})^{(p_*-1)(m-1)}\,dV\label{eq-detatoone}\\
    &=\int_E (-\log v)^{(p_*-1)(m-1)}\,dV
    &\text{using the definition \eqref{eq-vdef}}\nonumber\\
        &\approx\int_E (-\log w)^{(p_*-1)(m-1)}\,dV
&\text{ $v\approx w$ from \eqref{eq-vw}}\nonumber\\
    &=\norm{1_E}_{L^{p_*}(\Uu_B, (-\log w)^{(p_*-1)(m-1)})}^{p_*},\nonumber
\end{align}
where in \eqref{eq-detatoone}, we have used  change of variables with the $\abs{\det A}$-to-one map $\phi$. This completes the proof.
\end{proof}

\begin{proof}[Proof of Proposition~\ref{prop-restricted}]
    This follows the same ideas as the proof of Theorem~\ref{thm-genest} above. Let $p_*<p\leq 2$. Then we have
    \begin{align*}
        \norm{P^+_{\Uu_B}(1_E)}_{L^p(\Uu_B)}^p& \leq
        \norm{P^+_{\Uu_B}(1_E)}_{L^p(\Uu_B, w^{p-2})}^p
        & \text{since $w^{p-2}\geq 1$ on $\Uu_B$}\\
        & \lesssim \norm{P^+_{\D^n}(\rho_{\one A -\one} 1_{\phi^{-1}(E)})}_{L^p(\D^n)}^p
        &\text{by Proposition~\ref{prop-cov2}}\\
         & \lesssim  \int_{\phi^{-1}(E)} \rho_{2(\one A-\one)}\,dV
    &\text{ using inequality \eqref{eq-polyest2} of Theorem~\ref{thm-polydisc}}\\
    &\approx \int_{\phi^{-1}(E)} \abs{\det \phi'}^2dV\\
        &\approx \int_E dV = \abs{E}= \norm{1_E}_{L^p(\Uu_B)}^p,
    \end{align*}
    where the justification of the last steps is the same as in the proof of Theorem~\ref{thm-genest} above. This completes the proof.
\end{proof}
 \section{Proof of Theorem~\ref{thm-irreg1}}\label{sec-irreg}
For an integer multi-index $\beta\in \Z^{1\times n}$, denote by 
$e_\beta$ the  monomial function $e_\beta(z)=z^\beta$. Set $\alpha=\one A-\one,$ and 
let $U=\det \phi',$ where $\phi:\Omega\to \Uu_B$ is the branched covering map of Proposition~\ref{prop-mono2}.
Thanks to  Proposition~\ref{prop-derivative}, 
\[U(z)= \det\phi'(z)= \det A \cdot e_{\alpha}(z)= 
\det A \cdot e_{\one A-\one}(z)= \det A\cdot z^\alpha.
\]
After permuting the coordinates, without loss of generality, we assume that $$\alpha_1=\alpha_2=\dots=\alpha_m=\max_j\{\alpha_j\}.$$ 

Recall $\Gamma$ is the deck-transformation group \eqref{eq-sigmanu} of Proposition~\ref{prop-mono2}. A function $f$ on $\Omega$ is called  \emph{$\Gamma$-invariant} if
$f\circ g =f$ for each $g\in \Gamma$, where
each of
the automorphisms of $\Omega$ in \eqref{eq-sigmanu} is a Reinhardt rotation, it follows that each function $z\mapsto \abs{z_j}, 1\leq j\leq n$ is $\Gamma$-invariant. 

A direct computation shows that with $\sigma_\nu$ as in \eqref{eq-sigmanu}, we have $e_\beta\circ \sigma_\nu=e^{2\pi i \beta A^{-1}\nu}e_\beta,$ for $\nu\in \Z^{n\times 1}$, where $e_\beta(z)=z^\beta$ is a monomial. Therefore, $e_\beta$ is $\Gamma$-invariant if and only if there is an integer 
row vector $\mu\in \Z^{1\times n}$ such that $\beta=\mu A.$ Theorem~\ref{thm-irreg1} will be a consequence of the following:

\begin{thm}\label{thm-irreg2} With notation and conventions as above,
    let $\Uu_B$ be a nontrivial monomial polyhedron
with $m>1$ and $\alpha_1=\alpha_2=\dots=\alpha_m=\max_j\{\alpha_j\}.$ Suppose there exists a $\Gamma$-invariant monomial $e_b$ with $b_j=b_k=1$ for some $j<k\leq m$.  Then the Bergman projection $P_{\Uu_B}$ is not of restricted  weak type $(p_*,p_*)$.
\end{thm}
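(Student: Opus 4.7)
The plan is to exhibit, for each small $\epsilon>0$, a measurable set $E_\epsilon\subset\Uu_B$ and a threshold $y_\epsilon>0$ for which
\[
  \frac{y_\epsilon^{p_*}\,\bigl|\{w\in\Uu_B:|P_{\Uu_B}(1_{E_\epsilon})(w)|>y_\epsilon\}\bigr|}{|E_\epsilon|}\;\longrightarrow\;\infty\qquad(\epsilon\to 0),
\]
violating the defining inequality \eqref{eq-def-rwt}. The divergence will come from a surviving factor $(-\log\epsilon)^{(p_*-1)(m-1)}$, the very $\log$ weight of Theorem~\ref{thm-genest}, here witnessed as an actual obstruction. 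Because $e_b$ is $\Gamma$-invariant, the row vector $c:=bA^{-1}$ lies in $\Z^{1\times n}$, so $\tilde e_b(w):=w^c$ is a single-valued monomial on $\Uu_B$. I take
\[
  E_\epsilon\;=\;\{w\in\Uu_B:|\tilde e_b(w)|<\epsilon,\ \arg\tilde e_b(w)\in(0,\pi)\},
\]
whose preimage in $\Omega$ is the $\Gamma$-invariant set $\tilde E_\epsilon=\{z\in\D^n:|e_b(z)|<\epsilon,\ \arg e_b(z)\in(0,\pi)\}$. The angular half-condition is essential: on a Reinhardt $E$, the kernel of $\Uu_B$ collapses the projection $P_{\Uu_B}(1_E)$ to a constant, which cannot violate any weak-type bound.

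From the complex-Jacobian form of the Bell transformation formula (the identity preceding Lemma~\ref{lem-bell}, before taking moduli) and Proposition~\ref{prop-derivative} which gives $\det\phi'(z)=(\det A)\,z^\alpha$, I get the intertwining
\[
  P_{\Uu_B}(1_{E_\epsilon})(\phi(z))\;=\;z^{-\alpha}\,P_{\D^n}\bigl(z^\alpha\cdot 1_{\tilde E_\epsilon}\bigr)(z).
\]
Expanding $P_{\D^n}(z^\alpha 1_{\tilde E_\epsilon})(z)=\sum_{\gamma\in\Z_{\geq 0}^n}c_\gamma z^\gamma\int_{\tilde E_\epsilon}\zeta^\alpha\,\overline{\zeta^\gamma}\,dV$ in polar coordinates decouples angular and radial parts. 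Since $b_j=b_k=1$ forces $\gcd(b_1,\ldots,b_n)=1$, there is an $M\in\mathrm{SL}(n,\Z)$ with $b$ as first row; the substitution $u=M\theta$ on the $n$-torus shows the angular factor vanishes unless $\gamma=\alpha-\eta b$ for some $\eta\in\Z$, being nonzero precisely when $\eta=0$ or $\eta$ is odd. The value $\eta=1$ dominates for $|\tilde e_b(w)|$ small: it yields $\gamma=\alpha-b\in\Z_{\geq 0}^n$ (to guarantee this, one first selects a representative $b$ inside its affine sublattice $\{b+\nu A:\nu\cdot a_j=\nu\cdot a_k=0\}$ so that $b\leq\alpha$ componentwise, compatible with the automatic $b_j=b_k=1\leq\norm{\alpha}_\infty=\alpha_j=\alpha_k$ coming from $j,k\leq m$), and its radial integral
\[
  R(\epsilon)\;=\;\int_{\{r\in(0,1)^n:\ \prod_l r_l^{b_l}<\epsilon\}}\prod_l r_l^{2\alpha_l-b_l+1}\,dr
\]
acquires a factor $(-\log\epsilon)^{m-1}$ through an induction modelled on the proof of Lemma~\ref{lem-ualpha}, driven by the $m$ coordinates at which $\alpha$ attains its maximum.

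Dividing by $z^\alpha$ returns the singular factor $z^{\alpha-b}/z^\alpha=z^{-b}$, which descends on $\Uu_B$ to $\tilde e_b(w)^{-1}$, so the leading behaviour is
\[
  |P_{\Uu_B}(1_{E_\epsilon})(w)|\;\gtrsim\;C\,\epsilon^{\kappa}(-\log\epsilon)^{m-1}\cdot|\tilde e_b(w)|^{-1}
\]
on an open region of $\Uu_B$, for an explicit exponent $\kappa$. The quantities $|E_\epsilon|$ and $|\{|\tilde e_b(w)|<s\}\cap\Uu_B|$ are computed from their $\D^n$-preimages with the $|\det\phi'|^2\approx\rho_{2\alpha}$ weight, each carrying its own $(-\log)^{m-1}$ multiplicity by Lemma~\ref{lem-ualpha}. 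Balancing with $y_\epsilon=\epsilon^a$ for a suitable $a$ so that the polynomial $\epsilon$-powers in the ratio cancel, one is left with a factor $(-\log\epsilon)^{(p_*-1)(m-1)}$, which blows up since $m>1$ and $p_*>1$, giving the desired contradiction. The main obstacle is the radial estimation: one must show that the log multiplicity produced by $R(\epsilon)$ is exactly $m-1$ (governed by the maxima of $\alpha$) rather than the possibly smaller $m(b)-1$, which is what forces the preliminary lattice adjustment of $b$ and the careful bookkeeping through the pair $(j,k)$. A secondary point is to check that the contributions from other odd $\eta\neq 1$ are strictly subdominant in $\epsilon$ and cannot cancel the $\eta=1$ term in modulus.
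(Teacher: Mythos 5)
There is a genuine gap: the test set $E_\epsilon$ is not designed correctly, and the construction only produces a blow-up in the special case $\max_j\alpha_j=1$.

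The angular condition $\arg\tilde e_b(w)\in(0,\pi)$ forces the Taylor support of $P_{\D^n}(z^\alpha 1_{\tilde E_\epsilon})$ onto the arithmetic progression $\gamma=\alpha-\eta b$, $\eta\in\Z$ odd (as you correctly derive from the $u=M\theta$ substitution). But the decisive Taylor index is $\gamma=b-\one$, not $\gamma=\alpha-b$: the coefficient at $\gamma=b-\one$ has radial factor $\int r^{\alpha+\gamma+\one}\,dr=\int r^{\alpha+ b}\,dr$, whose exponents on the two distinguished slots are $\alpha_1+1,\alpha_2+1$, while at $\gamma=\alpha-b$ they are $2\alpha_1,2\alpha_2$. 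Your angular set kills $\gamma=b-\one$, since $b-\one\notin\alpha-\Z b$ generically (it would require $(\alpha_l+1)/b_l$ to be a constant integer over the support of $b$), so you are left with the smaller coefficient $a_{\alpha-b}(\epsilon)\approx\epsilon^{2\alpha_*+1}\log(1/\epsilon)$, where $\alpha_*=\max_j\alpha_j$. Since $|E_\epsilon|\approx\epsilon^{2\alpha_*+2}\log(1/\epsilon)$ and $p_*=(2\alpha_*+2)/(\alpha_*+2)$, the optimal ratio is
$$\frac{R(\epsilon)^{p_*}}{|E_\epsilon|}\approx\epsilon^{\,\frac{(2\alpha_*+2)(\alpha_*-1)}{\alpha_*+2}}\left(\log\tfrac1\epsilon\right)^{p_*-1},$$
which tends to $0$, not $\infty$, whenever $\alpha_*>1$ (choosing $y_\epsilon$ smaller or larger than $R(\epsilon)$ only makes the ratio worse, since $p_*<2\alpha_*+2$). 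It works only when $\alpha_*=1$, i.e.\ $\max\one a_j=2$; this happens to cover the example in Proposition~\ref{prop-example} but not the general statement of Theorem~\ref{thm-irreg2}. The paper avoids this by taking the angular set $B=\{\sin((\alpha-b+\one)\theta)\geq 0\}$ — which keeps $a_{b-\one}(s)\ne 0$ while still respecting $\Gamma$-invariance — together with the two-dimensional radial slab $\{|z_1z_2|<s,\ s<|z_1|<\sqrt s\}$; the resulting coefficient $a_{b-\one}(s)\approx s^{\alpha_1+2}\log(1/s)$ is exactly of size $|E_s|^{1/p_*}$ times a surviving $\log^{1-1/p_*}$.

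Two secondary issues reinforce the same conclusion. First, your normalization $b\le\alpha$ (needed so that $\gamma=\alpha-b\ge 0$) is a constraint in the opposite direction from the paper's $b\succeq\one$ (needed so that $\gamma=b-\one\ge 0$; it is freely achievable by multiplying by $z_l^{|\det A|}$), and it is not clear that $b\le\alpha$ can always be realized within the affine sublattice you describe. Second, the log exponent of $R(\epsilon)$ is governed by the multiplicity of the minimizer of $(\alpha_l+1)/b_l$, not by $m(\alpha)$; for this to equal $m$ you would need $b_l=1$ at all $m$ coordinates where $\alpha$ is maximal and $b_l=0$ elsewhere, which the hypothesis on $b$ does not guarantee. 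The paper sidesteps this entirely: it only needs a single $\log$ factor (from two equal radial exponents), which suffices because the $s$-powers cancel exactly.
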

We verify that there are domains that satisfy 
the hypotheses of this theorem.
\begin{prop}\label{prop-example}  The domain $\Uu_B\subset\cx^3$
of \eqref{eq-Bpoly}  is a monomial polyhedron, for which  $m=2$, and the monomial $e_b(z)=z_1z_2z_3$
corresponding to $b=(1,1,1)$
is $\Gamma$-invariant, for which 
$b_1=b_2=1$. Furthermore, $\Uu_B$ is biholomorphic to a punctured polydisc in $\cx^3$.
    \end{prop}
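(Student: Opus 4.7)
The proof amounts to a sequence of direct matrix computations applied to the definitions in Propositions~\ref{prop-mono1} and~\ref{prop-mono2}. First I would rewrite the defining inequalities $\abs{z_1z_3}<\abs{z_2}<\abs{z_1}<1$ as the three monomial inequalities $\abs{z_1}<1$, $\abs{z_1}^{-1}\abs{z_2}<1$, and $\abs{z_1}\abs{z_2}^{-1}\abs{z_3}<1$, which identifies
\[
B=\begin{pmatrix} 1 & 0 & 0 \\ -1 & 1 & 0 \\ 1 & -1 & 1 \end{pmatrix}.
\]
Since $B$ is lower triangular with $1$'s on the diagonal, $\det B=1$, and a direct inversion yields
\[
B^{-1}=\begin{pmatrix} 1 & 0 & 0 \\ 1 & 1 & 0 \\ 0 & 1 & 1 \end{pmatrix},
\]
whose entries are nonnegative, confirming that $B$ meets the conditions of Proposition~\ref{prop-mono1}. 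Because $\det B=1$, we have $\Delta=\adj B=B^{-1}$, and each column of $\Delta$ has gcd $1$, so the matrix $A$ of \eqref{eq-adef} coincides with $\Delta$.

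Next I would read off the relevant invariants. A direct computation gives $\one A=(1,1,1)A=(2,2,1)$, so the largest entry $2$ appears exactly twice, giving $m=2$ by \eqref{eq-m}. Moreover $\alpha=\one A-\one=(1,1,0)$, so the two largest entries of $\alpha$ already sit in positions $1$ and $2$ and no permutation of coordinates is required. For the $\Gamma$-invariance of the monomial $e_b$ with $b=(1,1,1)$, I would use the criterion recalled just before Theorem~\ref{thm-irreg2}: $e_b$ is $\Gamma$-invariant iff $b=\mu A$ for some $\mu\in\Z^{1\times 3}$. Since $A$ is lower triangular, the linear system $(1,1,1)=\mu A$ solves in cascade to $\mu=(1,0,1)\in\Z^{1\times 3}$, so $e_b(z)=z_1z_2z_3$ is indeed $\Gamma$-invariant, with $b_1=b_2=1$ matching the hypothesis of Theorem~\ref{thm-irreg2} (with $j=1$, $k=2\leq m=2$).

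Finally, for the assertion that $\Uu_B$ is biholomorphic to a punctured polydisc, I would exploit the special feature $\det A=1$. By Proposition~\ref{prop-mono2}, the covering map $\phi(z)=z^A$ has degree $\abs{\det A}=1$, so the deck-transformation group $\Gamma$ is trivial and $\phi$ is a biholomorphism onto $\Uu_B$. Using the rows of $A$, the map takes the explicit form $\phi(z)=(z_1,z_1z_2,z_2z_3)$, whose inverse is $z_1=w_1$, $z_2=w_2/w_1$, $z_3=w_1w_3/w_2$. The chain of strict inequalities $\abs{w_1w_3}<\abs{w_2}<\abs{w_1}<1$ on $\Uu_B$ forces $w_1\neq 0$ and $w_2\neq 0$, so the inverse is holomorphic on $\Uu_B$; tracing back, one checks that $\phi$ maps $\Omega=\D^*\times\D^*\times\D$ biholomorphically onto $\Uu_B$.

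There is no real obstacle here, since every step is a direct verification. The only point that deserves a careful check is that the candidate inverse $(w_1,w_2/w_1,w_1w_3/w_2)$ is defined on all of $\Uu_B$ and that it lands in a product of discs and punctured discs, both of which follow immediately from the defining inequalities of $\Uu_B$.
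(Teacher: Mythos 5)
Your proposal is correct and follows essentially the same computations as the paper: identify $B$, compute $\Delta=\adj B=B^{-1}=A$, read off $\one A=(2,2,1)$ to get $m=2$, and solve $\mu A=(1,1,1)$ to get $\mu=(1,0,1)$, establishing $\Gamma$-invariance of $e_b$. The only point where you go somewhat beyond the paper is the final claim, where the paper merely notes that $\det A=1$ makes $\Uu_B$ biholomorphic to a punctured polydisc, while you spell out the explicit biholomorphism $\phi(z)=(z_1,z_1z_2,z_2z_3)$ and its inverse and verify it maps $\D^*\times\D^*\times\D$ onto $\Uu_B$; this added detail is accurate and harmless.
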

    Therefore the hypotheses of Theorem~\ref{thm-irreg2} are verified for the domain $\Uu_B$ of \eqref{eq-Bpoly}.  
\begin{proof}
It is routine to verify that $\Uu_B$ as in \eqref{eq-Bpoly} is the monomial polyhedron defined by the matrix
\begin{equation*}
    \label{eq-Bmatrix}
B=\begin{pmatrix}
    \phantom{-}1&\phantom{-}0&0\\-1&\phantom{-}1&0\\\phantom{-}1&-1&1
\end{pmatrix}.   
\end{equation*}

Since $\det B=1$, and each row of $B$ has gcd 1, the adjugate
$\Delta=\adj B$ and the matrix $A$ of \eqref{eq-adef} are both
given by
\[ \Delta=A=\begin{pmatrix}
    1&0&0\\1&1&0\\0&1&1
\end{pmatrix}, \]
Since $\det A=1$, it is clear that $\Uu_B$ is biholomoprhic to a punctured polydisc (i.e. a product of discs and punctured discs, see \cite{CJM}).
Then, 
\[ \one A =(2,2,1), \quad (1,0,1)A =(1,1,1). \]
By \eqref{eq-m} we have $m=2$ and by the comments preceding the statement of Theorem~\ref{thm-irreg2}, the monomial $e_b$ with  $b=(1,1,1)$ is $\Gamma$-invariant. 
\end{proof}

\subsection{Preliminaries}
In terms of the quasinorm  of \eqref{eq-quasi}, we want to show that \[\displaystyle{\sup_E \frac{\norm{P_{\Uu_B}1_E}^*_{L^{p_*,\infty}(\Uu_B)}}{\norm{1_E}_{L^{p_*}(\Uu_B)}}=\infty},\]
where the supremum is over measurable subsets $E$ of $\Uu_B$ whose Lebesgue measure is positive. It suffices to find for each small $s>0$ a measurable subset $E_s\subset \Uu_B$ and a $\lambda_s>0$ such that 
\begin{equation}
    \label{eq-notweak}
    \lim_{s\to 0} \frac{(\lambda_s)^{p_*} \cdot\abs{\left\{w\in \Uu_B:\abs{P_{\Uu_B}1_{E_s}(w)}>\lambda_s\right\}}}{\abs{E_s}}=\infty.
\end{equation}
Thanks to Bell's transformation formula for the Bergman projection under a proper map, recalling that $U=\det\phi'$, we have for $E_s\subset \Uu_B$:
\begin{align*}
    (P_{\Uu_B}(1_{E_s})\circ \phi)\cdot U= P_\Omega(1_{E_s}\circ \phi\cdot \det \phi')
    =P_\Omega(\det\phi'\cdot 1_{\wt{E}_s})=P_{\D^n}(U\cdot 1_{\wt{E}_s})
\end{align*}
where we have set $\wt{E}_s=\phi^{-1}(E_s)\subset \Omega,$ so that  $1_{E_s}\circ \phi=1_{\wt{E}_s}$, and
we have identified the Bergman projection of $\Omega$ with that of the unit polydisc $\D^n.$
Notice that the set $\wt{E}_s$ is invariant under the group $\Gamma,$ i.e. 
for each $g\in \Gamma$, we have $g(\wt{E}_s)=\wt{E}_s$.
Further, if $\wt{E}_s$ is any $\Gamma$-invariant set in $\Omega$,
then the above holds with $E_s=\phi(\wt{E}_s).$

 Then for $\lambda>0$
\begin{align}
    \abs{\{w\in \Uu_{B}:\abs{P_{\Uu_{B}}(1_{E_s})(w)}>\lambda\}}
    =&\mu\{z\in \Omega: \abs{P_{\Uu_{B}}(1_{E_s})(\phi(z))}>\lambda\}\nonumber\\
    =&\mu\{z\in \D^n: {\abs{P_{\D^n}(u\cdot 1_{\tilde E_s})(z)}}/{\abs{u}}>\lambda\}\nonumber\\
    =& \mu\left\{z\in \D^n: \frac{\abs{P_{\D^n}(u\cdot 1_{\tilde E_s})(z)}}{\abs{\det A}\abs{z^\alpha}}>\lambda\right\}.\label{eq-disttransf}
\end{align}
Moreover, the  denominator
of \eqref{eq-notweak} is transformed to
\begin{align}
\abs{E_s}=\mu(\wt{E}_s)\approx \int_{\wt{E}_s}  \rho_{2\alpha}\,dV.
\label{eq-denotrans}
\end{align}
Now let\begin{equation}
    \label{eq-hsdef}
    h_s= P_{\D^n}(U\cdot 1_{F_s})\in A^2(\D^n).
\end{equation}
Using \eqref{eq-disttransf} and \eqref{eq-denotrans} in \eqref{eq-notweak} we see that  to prove
Theorem~\ref{thm-irreg1}, we need to find
for each small $s>0$, a $\Gamma$-invariant measurable subset $F_s\subset \Omega$ 
and a $\lambda_s>0$ such that 
\begin{equation}
    \label{eq-notweak2}
     \lim_{s\to 0} \frac{(\lambda_s)^{p_*} \cdot\mu\left\{ \frac{\abs{ h_s}}{\rho_\alpha}>\lambda_s\right\}}{\int\limits_{F_s}\rho_{2\alpha}\,dV}=\infty.
\end{equation}
\subsection{The sets $F_s$}
Without loss of generality, we assume that the $\Gamma$-invariant monomial $e_b$ has 
$b_1=b_2=1$. By perhaps multiplying $e_b$ by $z_j^{|\det A|}$ for $j>2$, we may further assume that $b\succeq \one$, i.e. $b_j\geq 1$ for all $j$.

 Let $B\subset [0,2\pi)^n$
be the subset 
\begin{align*}
   B&=\left\{\theta\in  \rl^{n\times 1}: \theta\in [0,2\pi)^n, \sin( (\alpha-b+\one)\theta)\geq 0\right\}\\
   &=
\bigcup_{k\in \Z}\left\{\theta\in[0,2\pi)^n:2k\pi\leq\sum{(\alpha_j+1-b_j)\theta_j}\leq 2k\pi+\pi\right\}.
\end{align*}
For each $s\in(0,1/4)$,
define a subset $A(s)\subset [0,1)^n$ by
\[ A(s)=\{r\in [0,1)^n:{r_{1}}r_{2}<s, s<{r_{1}}<\sqrt{s}\}.\]
We now set
\begin{align}
    F_s&=\left\{(r_1e^{i\theta_1},\dots, r_n e^{i\theta_n})\in\cx^n: r\in A(s), \theta\in B\right\}
    \nonumber\\
   &= \left\{z\in\D^n: \abs{z_{1}z_2}<s,s< \abs{z_{1}}<\sqrt{s}, 0\leq\text{Arg}(z^{\alpha+\one-b})\leq {\pi}\right\}
   \label{eq-Fsdef}.
\end{align}
 Since each of the functions $z\mapsto\abs{z_1}, z\mapsto \abs{z_2}$ 
and $e_{\alpha+\one-b}=e_{\one A -b}$ is $\Gamma$-invariant, it follows 
that for each $g\in \Gamma$, we have $g(F_s)=F_s$, i.e. $F_s$ is invariant under the group $\Gamma$. 

We turn to computing  the integrals of monomials  over $A(s)$. For $d\in\mathbb N^n$,
\begin{align}
\int_{A(s)}r^{d}dV(r)\nonumber=&\frac{1}{\prod_{j=3}^n(d_j+1)}
\int_{\{r\in[0,1]^2:r_1r_2<s,s<r_1<\sqrt s\}}r_1^{d_1}r_2^{d_2}dr_1dr_2\nonumber\\=&\frac{1}{\prod_{j=3}^n(d_j+1)}\int_{s}^{\sqrt s}\int_{0}^{s/r_1}r_1^{d_1}r_2^{d_2}dr_2dr_1\nonumber\\=&\frac{1}{\prod_{j=3}^n(d_j+1)}\int_{s}^{\sqrt s}r_1^{d_1}\frac{s^{d_2+1}r_1^{-d_2-1}}{d_2+1}dr_1\nonumber\\=&\label{10.2}\begin{cases}
  \dfrac{1}{(d_1-d_2)\prod_{j=2}^n(d_j+1)}\left(s^{\frac{d_1+d_2}{2}+1}-s^{d_1+1}\right)& \text{ for } d_1\neq d_2,\\
    \dfrac{1}{2\prod_{j=2}^n(d_j+1)}s^{d_1+1}\log(1/s)& \text{ for } d_1=d_2.
\end{cases}
\end{align}
Using this  we obtain for the denominator of \eqref{eq-notweak2}
\begin{align}
   \int_{F_s}\rho_{2\alpha}\,dV \leq &\int_{\{z\in\mathbb D^n:\abs{z_{1}}\abs{z_{2}}<s,s<\abs{z_{1}}<\sqrt s\}}\rho_{2\alpha}\,dV\nonumber\\
   \approx&\int_{\{z\in\mathbb D^2:\abs{z_{1}}\abs{z_{2}}<s,s<\abs{z_{1}}<\sqrt s\}}|z_1z_2|^{2\alpha_1}\,dV\nonumber\\
   \approx &\int_{\{r\in[0,1]^2:r_1r_2<s,s<r_1<\sqrt s\}}
   r_1^{2\alpha_1+ 1}r_2^{2\alpha_1+ 1}
   dr_1dr_2\nonumber\\
   \approx&s^{(2\alpha_1+2)}\log (1/s).\label{eq-denoest}
 \end{align}  
 \subsection{A lower bound on  $h_s$} In the definition \eqref{eq-hsdef} of $h_s$ we let the set $F_s$ to be as in \eqref{eq-Fsdef}. We will show the following:
\begin{lem}\label{lem-hs} There are constants $r\in (0,\frac{1}{4})$,
$s_0\in (0,\frac{1}{4})$ and $K>0$ and a relatively compact nonempty open subset $D\subset \D^{n-2}$ 
such that  for $z$ in 
the set
\begin{align*}
    \displaystyle{\Pi=\mathbb D^2_r\times D=\left\{z\in \cx^n: \abs{z_j}<r \text{ for } 1\leq j \leq 2, (z_3,\dots, z_n)\in D \right\}}
\end{align*}
and $s\in (0,s_0)$ we have
$ \abs{h_s(z)} \geq K s^{\alpha_1+2} \log\left(1/s\right).$
\end{lem}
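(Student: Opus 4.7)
The plan is to write the Taylor expansion of $h_s$ at the origin and show that a single monomial dominates as $s\to 0$. Since $h_s=P_{\D^n}(U\cdot 1_{F_s})\in A^2(\D^n)$, expanding the Bergman kernel of the polydisc as a power series yields
\[
h_s(z)=\sum_{\gamma\in\Z_{\ge 0}^n}c_\gamma z^\gamma,\qquad c_\gamma=\frac{\det A\cdot\prod_{j=1}^n(\gamma_j+1)}{\pi^n}\int_{F_s}w^\alpha\bar w^\gamma\,dV(w).
\]
Passing to polar coordinates $w_j=r_je^{i\theta_j}$, the integral factors as $I_r(\gamma)\cdot I_\theta(\gamma)$ with $I_r(\gamma)=\int_{A(s)}r^{\gamma+\alpha+\one}\,dr$ and $I_\theta(\gamma)=\int_B e^{i(\alpha-\gamma)\theta}\,d\theta$. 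Setting $\nu=\alpha+\one-b$ and using the Fourier expansion $1_B(\theta)=\tfrac12+\tfrac{2}{\pi}\sum_{k\ge0}\tfrac{\sin((2k+1)\nu\theta)}{2k+1}$, I would show that $I_\theta(\gamma)\ne 0$ precisely when $\alpha-\gamma\in\{0\}\cup\{\pm(2k+1)\nu:k\ge 0\}$.

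Next I would enumerate the nonnegative $\gamma$'s in this discrete set. The hypotheses $b_1=b_2=1$ and $\alpha_1=\alpha_2=\|\alpha\|_\infty$ force $\nu_1=\nu_2=\alpha_1\ge 1$ (the latter from the nontriviality of $\Uu_B$). The equation $\alpha-\gamma=(2k+1)\nu$ gives $\gamma_1=-2k\alpha_1$, which is nonnegative only for $k=0$, yielding the distinguished exponent $\gamma=b-\one$; the remaining surviving exponents are $\gamma=\alpha$ and $\gamma=\alpha+(2k+1)\nu$ for $k\ge 0$, all of which have $\gamma_1\ge\alpha_1$. Each such $\gamma$ satisfies $\gamma_1=\gamma_2$, placing the radial integral into the logarithmic subcase of \eqref{10.2}. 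A direct computation then gives $|c_{b-\one}|\ge C_0\, s^{\alpha_1+2}\log(1/s)$ with $C_0>0$ (the angular factor being $|I_\theta(b-\one)|=(2\pi)^n/\pi$), while every other surviving exponent contributes at most $|c_\gamma|\lesssim k^n s^{(2k+3)\alpha_1+2}\log(1/s)$ (respectively $s^{2\alpha_1+2}\log(1/s)$ for $\gamma=\alpha$), so the total tail summed over $k$ is $O(s^{3\alpha_1+2}\log(1/s))$.

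Finally, I would choose $r\in(0,1/4)$ and a closed polydisc $D\subset\D^{n-2}$ on which each coordinate satisfies $|z_j|\in[1/4,1/2]$. Because $b_1=b_2=1$ implies $(b-\one)_1=(b-\one)_2=0$, the monomial $z^{b-\one}$ depends only on $z_3,\dots,z_n$ and is bounded below on $D$ by a positive constant $c_0$. On $\Pi=\D^2_r\times D$, the triangle inequality then gives
\[
|h_s(z)|\ge c_0|c_{b-\one}|-\sum_{\gamma\ne b-\one}|c_\gamma|\,|z|^\gamma\ge c_0 C_0\, s^{\alpha_1+2}\log(1/s)-C_1\, s^{3\alpha_1+2}\log(1/s),
\]
which for $s$ below a threshold $s_0$ absorbs the second term into the first and yields the lemma with $K=c_0 C_0/2$.

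The main technical obstacle is the Fourier/lattice step: one must identify all nonnegative $\gamma\in\Z_{\ge 0}^n$ appearing in the arithmetic progression $\alpha\pm(2k+1)\nu$, and verify that the specific hypothesis $b_1=b_2=1$ together with $\alpha_1=\alpha_2$ not only isolates $b-\one$ as the unique exponent with $\gamma_1=\gamma_2=0$, but also simultaneously forces the radial integral into the logarithmic subcase of \eqref{10.2}, producing the extra $\log(1/s)$ factor needed to match the denominator estimate \eqref{eq-denoest}.
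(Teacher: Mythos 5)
Your proposal is correct and takes a genuinely different route from the paper. The paper splits the Taylor series of $h_s$ using the coarse structural fact that $h_s\cdot e_\one$ is $\Gamma$-invariant: it groups all $\Gamma$-invariant $\beta$ with $\beta_1=\beta_2=1$ into $h_s^{\J}$, writes this as $a_{b-\one}(s)f(z_3,\dots,z_n)$ with $f$ a priori a (possibly infinite) holomorphic series, verifies $a_{b-\one}(s)\neq 0$ via $\mathrm{Im}\int_B e^{i\nu\theta}\,d\theta>0$, and bounds the remainder $h_s^{\K}$ by $O(s^{\alpha_1+2})$ (no $\log$) using only the radial integral \eqref{10.3}. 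You instead compute the exact Fourier support of $1_B$: since $1_B(\theta)=h(\nu\cdot\theta)$ with $h$ the half--wave indicator, the angular factor $I_\theta(\gamma)$ vanishes unless $\gamma-\alpha\in\{0\}\cup\{\pm(2k+1)\nu\}_{k\ge0}$, and the hypotheses $b_1=b_2=1$, $\alpha_1=\alpha_2=\nu_1=\nu_2$ then force $\gamma=b-\one$ to be the unique survivor with $\gamma_1=\gamma_2=0$. This is sharper: it shows the paper's $f$ is actually the single monomial $z^{b-\one}$ (which the paper does not observe) and gives a tail bound of $O(s^{2\alpha_1+2}\log(1/s))$ rather than the paper's $O(s^{\alpha_1+2})$; both are smaller than the leading term, so either suffices. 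What your approach requires, and the paper's sidesteps, is verifying that the frequencies $0,\pm\nu,\pm3\nu,\dots$ are pairwise distinct in $\Z^n$ (this needs $\nu\neq 0$, which follows from nontriviality of $\Uu_B$) and that the Fourier series of $1_B$ may be legitimately paired against $e^{i(\alpha-\gamma)\theta}$ (standard via $L^2$ Parseval; absolute convergence fails but is not needed). Two minor slips worth flagging: the total tail is $O(s^{2\alpha_1+2}\log(1/s))$ rather than $O(s^{3\alpha_1+2}\log(1/s))$ once the $\gamma=\alpha$ term is included, and the polynomial growth of $\prod(\gamma_j+1)$ is $k^{n-1}$ after dividing by the $1/(2k+1)$ from $I_\theta$ (your $k^n$ bound is looser but still summable). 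Neither affects the conclusion. You should also note that when $\nu_j<0$ for some $j>2$ the family $\{\alpha+(2k+1)\nu\}$ meets $\Z_{\ge0}^n$ in only finitely many points; this only helps.
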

\begin{proof}

By Bell's formula we have
\begin{equation}
    \label{eq-hsbell}
    h_s=U\cdot P_{\Uu_B}(1_{\phi(F_s)})\circ \phi= \det A \cdot e_{\one A-\one} P_{\Uu_B}(1_{\phi(F_s)})\circ \phi.
\end{equation}
 The Taylor series expansion
$ h_s =\sum_{\gamma\succeq 0} a_\gamma(s) e_\gamma$ in $\D^n$ is also an orthogonal expansion, and its coefficients are given by
\begin{align}
a_\gamma(s)=&\frac{\ipr{h_s,e_\gamma}}{\norm{e_\gamma}^2_{L^2(\D^n)}}= \frac{\ipr{P_\Omega(u1_{F_s}),e_\gamma}}{\norm{e_\gamma}^2_{L^2(\D^n)}}=\frac{\ipr{u1_{F_s},e_\gamma}}{\norm{e_\gamma}^2_{L^2(\D^n)}}=\frac{(\gamma+\one)^{\one}}{\pi^n}\ipr{u1_{F_s},e_\gamma}\nonumber\\
&=\frac{(\gamma+\one)^\one\cdot \det A}{\pi^n} \int_{F_s}e_\alpha\ol{e_\gamma}dV\nonumber\\
&= \frac{(\gamma+\one)^\one\cdot \det A}{\pi^n}\cdot \int_{A(s)}r^{\gamma+\alpha+\one}dV(r)\cdot \int_B e^{i(\alpha-\gamma)\cdot\theta}dV(\theta). \label{eq-coefficient}
\end{align}

From \eqref{eq-hsbell}, the function $h_s\cdot e_\one$ is $\Gamma$-invariant, since it is the product of $\det A$ and the $\Gamma$-invariant functions $e_{\one A}$ and  $P_{\Uu_B}(1_{\phi(F_s)})\circ \phi$.   Consequently the series
$\sum_{\gamma\succeq 0} a_\gamma(s) e_{\gamma+\one}$ represents the $\Gamma$-invariant function $h_s\cdot e_\one$, and  each term of this series must be $\Gamma$-invariant. Therefore the series for $h_s$ can be rewritten as
\begin{equation}
    \label{eq-hs}
    h_s = \sum_{\substack{\beta \succeq 1\\ \beta \text{ is } \Gamma-\text{invariant}}} a_{\beta-\one}(s) e_{\beta-\one}.
\end{equation}
Let $\mathcal{J}$ be
the subset $\mathcal{J}=\{\beta\succeq \one: \beta \text{ is } \Gamma-\text{invariant}, \beta_1=\beta_2=1\},$
and let $\mathcal{K}$ be the set of
indices of the remaining terms in the series \eqref{eq-hs}, i.e.
$ \mathcal{K}=\{\beta\succeq \one:\beta \text{ is } \Gamma-\text{invariant}, \beta_1\not=1 \text{ or } \beta_2\not=1\}.$
Notice that $\mathcal{J}$ is nonempty since $b\in \mathcal{J}.$
Write 
\begin{equation}
    \label{eq-hsjk}
    h_s=h_s^{\J}+ h_s^{\mathcal{K}}= \sum_{\beta\in \mathcal{J}} a_{\beta-\one}(s) e_{\beta-\one}+\sum_{\beta\in \mathcal{K}} a_{\beta-\one}(s) e_{\beta-\one}.
\end{equation}
The first term can be written as
$\displaystyle{ h_s^{\J}(z)=\sum_{\beta\in \mathcal{J}} a_{\beta-\one}(s) e_{\beta-\one}(z) = a_{b-1}(s)f(z_3,\dots, z_n), }$
where $f$ is a holomorphic function on 
$\D^{n-2}$. We claim $f$ is non-constant and independent of $s$.  To see it's not a constant we only need to check that $a_{b-\one}(s)\not=0$, which follows since in the
formula \eqref{eq-coefficient}, each of the factors is nonzero if $\gamma=b-\one$. For the last factor this follows from 
\[ \mathrm{Im}\left(\int_B e^{i(\alpha-b+\one)\cdot\theta}dV(\theta) \right)= \int_B \sin\left((\alpha-b+\one)\cdot\theta\right)dV(\theta)>0\]
from the definition of $B$. 
Note that by letting $d=\gamma+\alpha+\one$ in (\ref{10.2}) we have
\begin{align}
&\int_{A(s)}r^{\gamma+\alpha+\one}dV(r)=\label{10.3}\begin{cases}
  \frac{1}{(\gamma_1-\gamma_2)\prod_{j=2}^n(\alpha_j+2+\gamma_j)}\left(s^{\frac{\gamma_1+\gamma_2}{2}+\alpha_1+2}-s^{\gamma_1+\alpha_1+2}\right)& \text{ for } \gamma_1\neq \gamma_2,\\
    \frac{1}{2\prod_{j=2}^n(\alpha_j+2+\gamma_j)}s^{\gamma_1+\alpha_1+2}\log(1/s)& \text{ for } \gamma_1=\gamma_2.
\end{cases}
\end{align}
Taking $\gamma=\beta-\one$ in \eqref{eq-coefficient} for $\beta\in \mathcal J=\{\beta\succeq \one: \beta \text{ is } \Gamma-\text{invariant}, \beta_1=\beta_2=1\}$ we
have 
\begin{equation}
    \label{eq-aboneapprox} a_{\beta-\one}(s)= \frac{(\beta)^\one\cdot \det A\cdot s^{\alpha_1+2}\log(1/s)}{2\pi^n(\alpha_1+2)\prod_{j=3}^n(\alpha_j+1+\beta_j)}\int_B e^{i(\alpha-\beta+\one)\cdot\theta}dV(\theta).
\end{equation}
Therefore, 
\[f(z_3,\dots,z_n)=\sum_{\beta\in\mathcal J}\frac{a_{\beta-\one}(s)e_{\beta-\one}(z)}{a_{b-\one}(s)},\] which is independent of $s$. Then there exists a point $a\in \D^{n-2}$ such that $|f(a)|=c>0$. We choose small $r>0$ such that  the set 
\[D=\left\{z\in\mathbb D^{n-2}:\abs{z_j-a_j}< r, 1\leq j\leq n-2\right\}\]
is relatively compact in $\mathbb D^{n-2}$, and $|f(z)|>c/2$ for all $z\in D$. The constant $c$ here again is independent of $s$.

 If   $\beta\in \mathcal{K}$ then either 
$\beta_1>1$,  or $\beta_1=1,\beta_2>1$. If $\beta_1>1$, then for $\gamma=\beta-\one$, $\gamma_1>0$ and  \eqref{10.3} implies $ \displaystyle{\int_{A(s)}r^{\beta+\alpha}dV(r)\lesssim s^{\alpha_1+2}}$ for small $s$. 
If $\beta_1=1$ and $\beta_2>1$,   then for $\gamma=\beta-\one$ we have $\gamma_1\neq \gamma_2$ and hence
$\int_{A(s)}r^{\beta+\alpha}dV(r)\approx s^{\alpha_1+2}$. We then conclude from \eqref{eq-coefficient} that $\abs{a_{\beta-\one}(s)}\lesssim \beta^\one\cdot s^{\alpha_1+2}$ if $\beta\in \mathcal{K}$. Consequently, we can write the summand $h_s^\mathcal{K}$ of \eqref{eq-hsjk} as 
$h_s^\mathcal{K}(z)= s^{\alpha_1+2} g_s(z) $
where $g_s$ is holomorphic in  $\D^n$ and satisfies
\[ \abs{g_s(z)} \lesssim \sum_{\gamma\succeq 0} (\gamma+\one)^\one \abs{z^\gamma} = \dfrac{\abs{z^\one}}{\prod_{j=1}^n (1-\abs{z_j})^2},\]
with the implied constant independent of $s$.

 Since $D$ is relatively compact in $\D^n$
 we have  $|f(z)|>c/2$ for $z\in D$.
 Further, 
$\displaystyle{ \Pi=\D^2_r\times  D,}$
is relatively compact in $\D^n$ and   $\sup_\Pi|g_s|\lesssim 1$. Therefore for $z\in \Pi$
and small $s$
\begin{align}
    \abs{h_s(z)} \geq \abs{h_s^{\mathcal{J}}(z)}- \abs{h_s^{\mathcal{K}}(z)}\nonumber &\geq \abs{a_{b-1}f(z_3,\dots, z_n)}- s^{\alpha_1+2} \abs{g_s(z)}\nonumber\\
    &\geq \frac{cC_1}{2} s^{\alpha_1+2} \log(1/s)  -C_2 s^{\alpha_1+2} \label{eq-hsbound1}
\end{align}
where $C_1, C_2$ are nonnegative constants independent of $s$. In the first term we have used \eqref{eq-aboneapprox} along with the fact that $\abs{f(z_3,\dots, z_n)}>\frac{c}{2}$ if $z\in \Pi$.
Therefore
\[ \abs{h_s(z)}\geq \eqref{eq-hsbound1}=  C_1 s^{\alpha_1+2} \log(1/s)\left(\frac{c}{2} -\frac{C_2}{C_1 \log(1/s)} \right). \]
Since $\frac{C_2}{C_1 \log(1/s)}\to 0$ as $s\to 0$, we can choose $s_0$ so small that 
$ \frac{C_2}{C_1 \log(1/s)}<\frac{c}{2}   $ if $0<s<s_0.$ Thus for $z\in \Pi$ and  $s\in (0,s_0)$ we have $\frac{c}{2} -\frac{C_2}{C_1 \log(1/s)}> \frac{c}{2} $,
and the result follows with $K=\frac{cC_1}{2}.$\end{proof}

\subsection{The numbers $\lambda_s>0$ and completion of the proof} Now with notation as in Lemma~\ref{lem-hs}, we set $\displaystyle{\lambda_s=\left(\frac{1}{4}\right)^{b_2-1} K s^{\alpha_1+2} \log({1}/{s}).}$
Then for $z\in \Pi$ we have
\[ \frac{\abs{h_s(z)}}{\rho_\alpha(z)}\geq \frac{Ks^{\alpha_1+2}\log({1}/{s})}{\abs{z^\alpha}}>\lambda_s,\]
since $\abs{z^\alpha}<1$. 
Consequently, for each $s\in (0,s_0)$,
\[ \mu\left\{z\in \D^n:\frac{\abs{h_s(z)}}{\rho_\alpha(z)}>\lambda_s\right\} \geq \mu(\Pi). \]
Combining the definition of $\lambda_s$ with this statement and \eqref{eq-denoest} we see that:
\[  \frac{(\lambda_s)^{p_*} \cdot\mu\left\{ \frac{\abs{ h_s}}{\rho_\alpha}>\lambda_s\right\}}{\int_{F_s}\rho_{2\alpha}\,dV}\gtrsim \left(\log\left(\frac{1}{s}\right)\right)^{p_*-1}, \]
and therefore goes to $\infty$ as $s\to0.$
Therefore \eqref{eq-notweak2} holds, completing the proof of Theorem~\ref{thm-irreg2}. Combining with Proposition~\ref{prop-example}, Theorem~\ref{thm-irreg1} follows.
\section{Remarks and directions for future research}
Despite  Theorems \ref{thm-genest} and \ref{thm-irreg2}, the connection between the matrix $B$ of $\Uu_B$ and endpoint behavior of $P_{\Uu_B}$ requires further study. Below we list some remarks and questions we would like to investigate in the future:
\begin{enumerate}[wide]
    \item 
In Section 5.4, by realizing that $\int_{F_s}\rho_{2\alpha}\log (1/\rho_{2\alpha})^{t}\,dV\lesssim s^{2\alpha_1+2}(\log(1/s))^{t+1}$, we see that for all $t<p_*-1$,
\[  \frac{(\lambda_s)^{p_*} \cdot\mu\left\{ \frac{\abs{ h_s}}{\rho_\alpha}>\lambda_s\right\}}{\int_{F_s}\rho_{2\alpha}(\log(1/\rho_{2\alpha}))^t\,dV}\gtrsim \left(\log\left(\frac{1}{s}\right)\right)^{p_*-1-t}, \]
which still goes to $\infty$ as $s\to 0$. This leads to the failure of the estimate below for $\Uu_B$ in Theorem \ref{thm-irreg2} and any $t<p_*-1$
\begin{equation}\label{eq-f}
\norm{P^+_{\Uu_{B}}(1_E)}_{L^{p_*}(\Uu_B,w^{p_*-2})}\leq C\norm{1_E}_{L^{p_*}(\Uu_B, (-\log w)^{t})},
    \end{equation}
    thereby implying the sharpness of the exponent $(m-1)(p_*-1)$ of the weight $-\log w$ in Estimate \eqref{eq-genest} for $m=2$. For general $m$, the exponent $(m-1)(p_*-1)$ in \eqref{eq-genest} is likely to be sharp in the sense that there is a monomial polyhedron $\Uu_B$ with \eqref{eq-f} fail for $t<(m-1)(p_*-1)$. 
\item Theorem \ref{thm-irreg2} does not suggest in general the failure of restricted weak type $(p_*,p_*)$ estimates for $P_{\Uu_B}$ when $m>1$. For $\Uu_B\subset\cx^2$, it is not
    difficult to see that there is no monomial polyhedron  satisfying the hypotheses of Theorem~\ref{thm-irreg2} (i.e. $m=2$, the monomial $e_{(1,1)}(z)=z_1z_2$ is $\Gamma$-invariant and $\Uu_B$ is nontrivial). It is still unclear to us whether there exists $\Uu_B\subset\cx^2$ on which the Bergman projection fails to be of restricted weak type $(p_*,p_*)$ (or weak-type $(q_*,q_*)$). 
 
 \item It remains to be seen what conditions on the matrix $B$ of $\Uu_B$ characterize the precise endpoint $L^p$ behavior of $P_{\Uu_B}$ for $p=p_*$ and $p=q_*$.
 \item It would also be interesting to study the restricted type estimates for the Bergman projection on other quotient domains such as the symmetrized polydiscs.
 \end{enumerate}

\bibliographystyle{alpha}
\bibliography{references}

\end{document}